\DeclareMathOperator{\Bin}{Bin}
\DeclareMathOperator{\BP}{BP}
\DeclareMathOperator{\capa}{Cap}
\newcommand{\TV}{\mathrm{TV}}
\renewcommand{\geq}{\geqslant}
\renewcommand{\leq}{\leqslant}
\def\RR{\mathbb{R}}
\def\ZZ{\mathbb{Z}}
\def\NN{\mathbb{N}}
\def\PP{\mathbb{P}}
\def\EE{\mathbb{E}}
\def\ZZ{\mathbb{Z}}
\renewcommand{\P}{\mathrm{P}}
\newcommand{\E}{\mathrm{E}}
\newcommand{\bT}{\mathbf{T}}
\newcommand{\bt}{\mathbf{t}}
\newcommand{\bP}{\mathbf{P}}
\newcommand{\bE}{\mathbf{E}}
\newcommand{\bc}{\mathrm{a}}
\newcommand{\ind}{\mathds{1}}
\newcommand{\ex}{\mathrm{ex}}
\newcommand{\CM}{\mathrm{CM}}
\newcommand{\gb}{\beta}
\newcommand{\gep}{\varepsilon}       
\newcommand{\sumtwo}[2]{\sum_{\substack{#1 \\ #2}}} 
\theoremstyle{plain}
\newtheorem{theorem}{Theorem}[section]
\newtheorem{lemma}[theorem]{Lemma}
\newtheorem{proposition}[theorem]{Proposition}
\newtheorem{assumption}{Assumption}
\newtheorem{conjecture}{Conjecture}
\theoremstyle{definition}
\newtheorem{definition}[theorem]{Definition}
\newtheorem{example}[theorem]{Example}
\theoremstyle{remark}
\newtheorem{remark}[theorem]{Remark}
\definecolor{Orange}{HTML}{B26536}
\numberwithin{equation}{section}
\setlist{itemsep=1pt,topsep=2pt,parsep=1pt, leftmargin=1.5em}
\let\OLDthebibliography\thebibliography
\renewcommand\thebibliography[1]{
  \small
  \OLDthebibliography{#1}
  \setlength{\parskip}{0pt}
  \setlength{\itemsep}{3pt plus 0.3ex}
}
\begin{document}

\title{\vspace{-2cm} Ising model on a Galton--Watson tree\\ with a sparse Bernoulli external field}

\author[*]{Irene Ayuso Ventura}
\author[$\dagger$,$\ddagger$]{Quentin Berger}

\affil[*]{\footnotesize Laboratoire d'Analyse et de Math\'ematiques Appliqu\'ees, Universit\'e Paris-Est Cr\'eteil.}
\affil[$\dagger$]{\footnotesize Universit\'e Sorbonne Paris Nord, Laboratoire d'Analyse Géométrie et Applications.}
\affil[$\ddagger$]{\footnotesize Institut Universitaire de France.}

\date{}

\maketitle

\vspace{-1cm}
\begin{abstract}
We consider the Ising model on a supercritical Galton--Watson tree $\mathbf{T}_n$ of depth $n$ with a sparse random external field, given by a collection of i.i.d. Bernoulli random variables with vanishing parameter $p_n$. 
This may me viewed as a toy model for the Ising model on a configuration model with a few interfering external vertices carrying a plus spin: the question is to know how many (or how few) interfering vertices are enough to influence the whole graph.
Our main result consists in providing a necessary and sufficient condition on the parameters $(p_n)_{n\geq 0}$ for the root of $\mathbf{T}_n$ to remain magnetized in the large $n$ limit.
Our model is closely related to the Ising model on a (random) \textit{pruned} sub-tree $\mathbf{T}_n^*$ with plus boundary condition; one key result is that this pruned tree turns out to be an inhomogeneous, $n$-dependent, Branching Process. 
We then use standard tools such as tree recursions and non-linear capacities to study the Ising model on this sequence of Galton--Watson trees; one difficulty is that the offspring distributions of $\mathbf{T}_n^*$, in addition to vary along the generations $0\leq k \leq n-1$, also depend on~$n$.\\[3pt]
\textit{Keywords: Ising model; random graphs; random external field; phase transition.}\\[3pt]
\textit{MSC2020 AMS classification: 60K35; 82B20; 82B44; 82B26}
\end{abstract}
\vspace{-0.4cm}

\begin{center}
	\begin{figure}[htbp]
	\centering
	\includegraphics[scale=0.25]{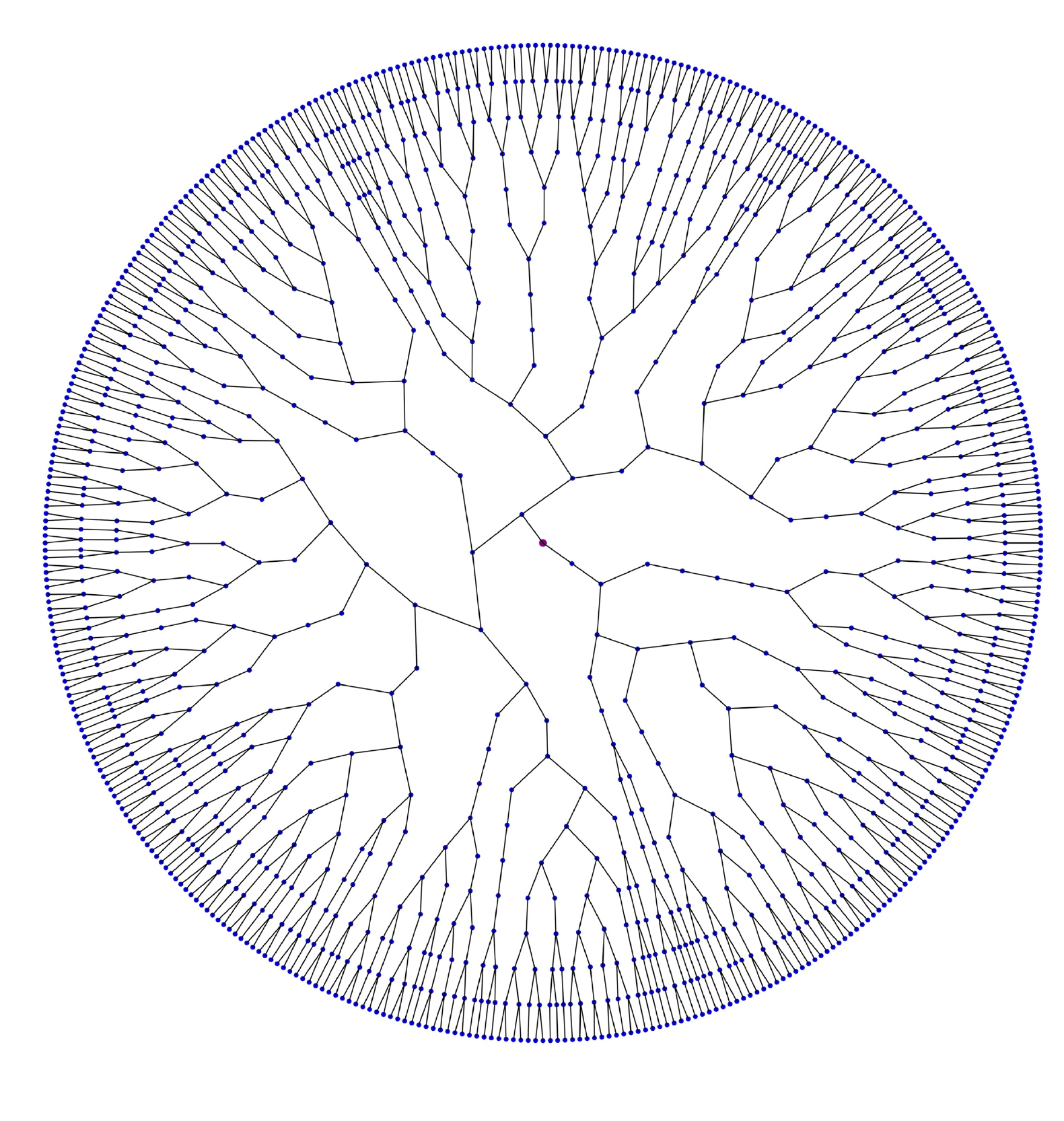}
		\qquad
		\includegraphics[scale=0.25]{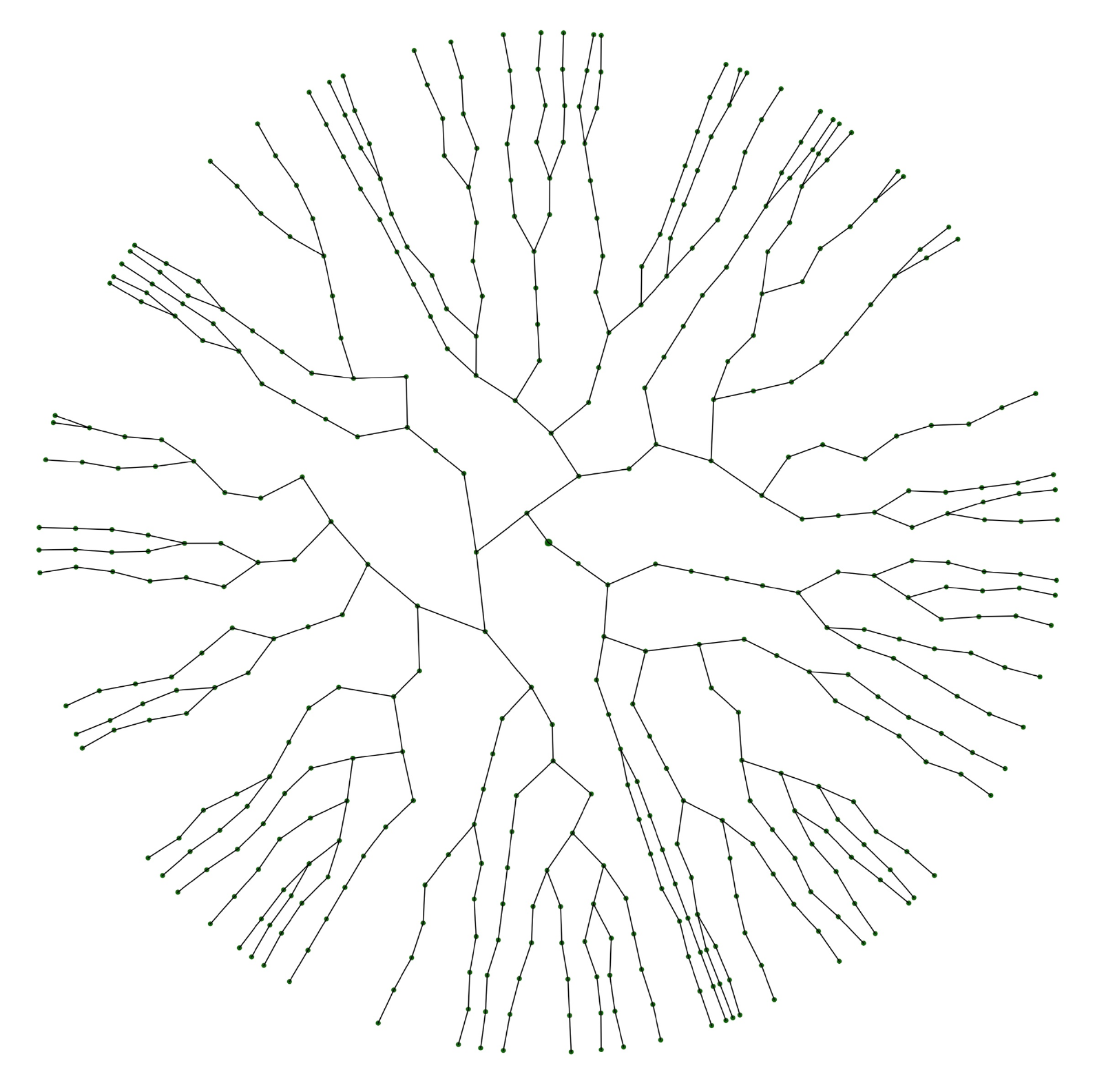}
		\caption{\footnotesize A tree of depth $n=15$ and its \textit{pruned} version by a Bernoulli process on its leaves.}
		\label{fig:firstpage}
	\end{figure}
	\end{center}


\setcounter{tocdepth}{1}
\tableofcontents


\section{Introduction of the model and main results}

The Ising model is a celebrated model, studied in depths for over 100 years. It was first introduced by Wilhelm Lenz and Ernst Ising as a model for magnetism, and was originally defined on regular lattices; we refer for instance to the books \cite{Bovier,FriedliVelenik} for a general introduction. 
Later, the Ising model was also studied on different types of graphs, starting with regular structures such as the Bethe lattice or Cayley tree, see~\cite{Rozikov} for an extensive summary on the subject. 
The seminal article~\cite{Lyons89}, in which Lyons identifies the critical temperature of the model on an arbitrary infinite tree, opened the way to the study of the Ising model and other statistical mechanics models on tree-like graphs, and, more generally, on random graphs (see for instance~\cite{Montanari2013}).

Motivated by the interest of the model to describe complex networks (see~\cite{Dor08} for a review), the literature on the Ising model on random graphs has grown considerably in recent years. 
Let us mention a few relevant results on the subject.
First, the thermodynamic limit of the Ising model on quenched random graphs has been studied in \cite{DemboMontGibbs10,DemboMontanari10,DomGiaRvdH} as well as its critical behavior in \cite{DomGiaRvdH12,GGvdHP2015}. 
Thereafter, the annealed Ising model has also gotten some attention, see for instance \cite{can2017critical,can2022annealed} references therein. 
More recently, the local weak limit of the Ising model on locally tree-like random graphs was considered in \cite{DemboBasak17,MontanariMosselSly}.

However, most of the literature on the Ising model on random graphs considers free or plus boundary conditions or a \emph{homogeneous} external field, but there does not seem to be many results when the boundary conditions or the external field are random (and depend on the size of the graph). 
In the present paper, we consider the Ising model on the simplest random graph possible, a Galton--Watson tree of depth $n$, but with a sparse random external field (which may be restricted to the boundary, which is close to being a boundary condition) whose distribution depends on $n$. 
We see our results as a first step towards the study of the Ising model on a random graph with a few external interfering vertices.

\subsection{General setting of the paper}

For a finite graph $G=(V,E)$, we consider the following Gibbs (ferromagnetic) Ising measure on spins $\sigma \in \{-1,+1\}^{|V|}$, with inverse temperature $\gb \in \mathbb{R}_+$ and external field $h= (h_v)_{v\in V} \in \mathbb{R}^{|V|}$:
\begin{equation}
\label{def:Ising}
\P_{G,\gb}^{h} (\sigma) := \frac{1}{Z_{G,\gb}^h} \exp\bigg( \gb \Big( \sumtwo{u,v \in V}{u\sim v} \sigma_u \sigma_v  + \sum_{v\in V} h_v \sigma_v  \Big)\bigg) \,,
\end{equation}
where we denoted $u\sim v$ if $\{u,v\} \in E$.
To simplify the statements and without loss of generality, we have assumed here that the coupling parameter is $J=1$.

In many cases, a natural  boundary of $V$, denoted $\partial V \subset V$, can be identified\footnote{For instance, if $G$ is a subgraph of a graph $\hat G = (\hat V,\hat E)$, the natural boundary is $\partial V = \{v \in V, \exists u \in \hat V \setminus V \text{ s.t. } u\, \hat{\sim}\, v\}$.
If $G$ is a finite tree, one usually takes $\partial V$ as the set of leaves; if $V = \{-n,\ldots, n\}^d$, one usually takes $\partial V = \{(v_1, \ldots, v_d) \in V \,, \exists i\ |v_i|=n\}$.}.
Then, we can consider the Ising model on the graph $G$  with boundary condition $\xi \in \{+1,-1\}^{\partial V}$ by considering the Gibbs measure
\begin{equation}
\label{def:Isingbc}
\P_{G,\gb}^{\xi} (\sigma) := \frac{1}{Z_{G,\gb}^h} \exp\bigg( \gb \Big(\sumtwo{u,v \in V\setminus \partial V}{u\sim v} \sigma_u \sigma_v  + \sumtwo{u \in V\setminus \partial V, v\in \partial V }{u\sim v} \xi_v \sigma_v \Big) \bigg) \,.
\end{equation}
For the Ising model with external field~\eqref{def:Ising}, in the case where $h_v=0$ for all $v \in V\setminus \partial V$, we will say that the Ising model has \emph{boundary external field}.

\begin{remark}[Exterior boundary]
\label{rem:boundary}
It might also be natural to consider an exterior boundary of $G$, denoted $\partial^{\ex} G = (\partial^{\ex} V, \partial^{\ex} E)$, where $\partial^{\ex} V$ is a set of external vertices (disjoint from $V$) and $\partial^{\ex} E$ is a set of boundary edges $\{x,y\}$ with $x\in V$, $y\in \partial^{\ex}V$.
We can then consider the Ising model on $G$ with \textit{exterior} boundary condition $\xi\in \{-1,+1\}^{\partial^{\ex} V}$ by considering the Gibbs measure~\eqref{def:Isingbc} on the graph $\bar G = ( \bar V, \bar E)$ with $\bar V =V \cup \partial^{\ex} V$, $\bar E =E\cup \partial^{\ex} E$ and with boundary condition $\xi$ on $\partial \bar V = \partial^{\ex} V$.

Notice that, in the definition~\eqref{def:Ising}, if the external field has value $h_v\in \ZZ$, we may interpret the external field as some exterior boundary condition, where the set $\{v, h_v\neq 0\}$ can be interpreted as the boundary.
Indeed, it corresponds to adding $|h_v|$ extra edges to~$v$, all leading to vertices with assigned value $\mathrm{sign}(h_v)$.
\end{remark}

\paragraph*{}
In the following, we focus on the Ising model~\eqref{def:Ising} with external field with $h_v\in \{0,1\}$; in fact, we will consider a random external field with either $(h_v)_{v\in V}$ or $(h_v)_{v\in \partial V}$ given by i.i.d.\ Bernoulli random variables of parameter $p \in (0,1)$.
With an abuse of terminology, this corresponds to  adding a plus (exterior) boundary condition to the vertices $v\in V$ with $h_v=+1$; it indeed corresponds to adding exactly one extra vertex $v'\sim v$ with boundary condition $\sigma_{v'}=+1$.
We refer to Figure~\ref{fig:graphboundary} for an illustration.

\begin{figure}[tbp]
\centering
\includegraphics[scale=0.9]{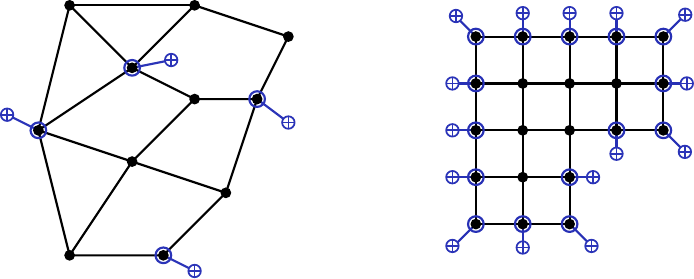}
\caption{\small Representation of graphs with external fields $h_v\in \{0,1\}$.
On each graph, we have circled the vertices where $h_v =+1$.
The graph on the left has no natural boundary: we can think of the vertices with $h_v=+1$ as the boundary, each vertex being connected to a `$+$' through an extra edge.
On the right, the graph is a subset of $\mathbb{Z}^2$ and $h_v=+1$ for all $v \in \partial V$: this is our \emph{boundary external field} and in the present case it does not exactly correspond to the Ising model with (exterior) plus boundary condition, because the corner vertices are connected to a `$+$' through only one extra edge (to obtain plus boundary condition one should take $h_v =+2$ for the corner vertices).}
\label{fig:graphboundary}
\end{figure}

One key physical quantity that we are going to study is the magnetization of a vertex~$v\in V$:
\[
\mathfrak{m}_{G,\gb}^h(v) := \E_{G,\gb}^{h}[\sigma_v] \,.
\]
A closely related quantity is the probability that a given spin is in the plus state, namely $\P_{G,\gb}^{h}(\sigma_v=+1) =\frac12 (1+\mathfrak{m}_{G,\gb}^h(v))$ and the following log-likelihood ratio
\[
r_{G,\gb}^h(v) = \log \bigg( \frac{\P_{G,\gb}^{h}(\sigma_v=+1)}{\P_{G,\gb}^{h}(\sigma_v=-1)}\bigg) 
= \log \bigg( \frac{1+\mathfrak{m}_{G,\gb}^h(v) }{ 1-\mathfrak{m}_{G,\gb}^h(v)}\bigg) \,.
\]
Considering a sequence of growing graphs $(G_n)_{n\geq 1}$ with associated non-negative external fields (that may depend on $n$), we say that there is spontaneous magnetization at inverse temperature $\beta$ if, choosing for each $n$  a vertex $v_n$ uniformly at random in the graph,the magnetization $\mathfrak{m}_{G_n,\gb}^h(v_n)$ (or equivalently the log-likelihood ratio $r_{G_n,\gb}^h(v_n))$ remains bounded away from $0$ as $n\to\infty$ (either almost surely or in probability).

\subsection{Ising model on the Configuration Model with interfering vertices}
\label{sec:ConfigModel}

Let us now introduce one of our main motivation for considering a random sparse external field on a Galton--Watson tree: the Ising model on a random graph given by the configuration model, with a small proportion of additional interfering individuals.

The Configuration Model is a random graph in which edges are places randomly between vertices whose degrees are fixed beforehand.
We refer to \cite[Ch.~7]{vdHRandomGraphs} for a complete introduction but let us briefly present the construction. 
Let $N$ be the number of vertices of the graph, and let $\mathbf{d} = (d_i)_{i \in \llbracket N \rrbracket}$ be a sequence of degrees, verifying $d_i\geq 1$; we also use the notation $\llbracket N \rrbracket = \{1,\ldots, N\}$.
Then, the configuration model, noted $\CM_N (\mathbf{d})$ is an undirected random (multi)graph such that each vertex $i \in \llbracket N \rrbracket$ has degree $d_i$ (self-loops and multiple edges between pairs of vertices are allowed).
It is constructed inductively as follows.
As a preliminary to the construction, attach $d_i$ half-edges to each vertex vertex $i \in \llbracket N \rrbracket$, so that there is a collection $\mathcal{C}_0$ of $\ell_N := \sum_{i=1}^N d_i$ available half-edges.
Then, construct the first edge of the graph by choosing two half-edges uniformly at random from $\mathcal{C}_0$ and by pairing them; afterwards, remove these two half-edges from the set $\mathcal{C}_0$.
After this first step, the new set $\mathcal{C}_1$ of available half-edges contains $\ell_N-2$ elements. 
This procedure is iterated $\frac12\ell_N$ times, until there are no more half-edges available; notice that $\ell_N$ must be even.


Let us stress that the Configuration Model has no natural boundary, but one may think of having a few additional external vertices that are ``interfering'' with the graph.
To model this, add $M_N$ vertices to the initial $N$ vertices of the model (we think of having $M_N\ll N$), all with degree $1$, and call these extra vertices \textit{interfering}.
One can then proceed to construct the graph as described above, \textit{i.e.}\ a configuration model with both original and interfering vertices\footnote{The fact that interfering vertices have degree $1$ ensures that these vertices cannot interfere with each other; but one could naturally consider a degree sequence $(\tilde d_i)_{i\in \llbracket M_N \rrbracket}$ for these vertices.}. 
Notice that, even if interfering vertices have degree one, an original vertex might have more than one interfering vertex attached to it.
The $M_N$ interfering vertices may also be interpreted as some (external) boundary of the graph: in the context of the Ising model, one may consider the model where interfering vertices all have a plus spin, and try to determine a condition whether there is spontaneous magnetization on a sequence of configuration models, depending on $M_N,N$ and the degree sequence $\mathbf{d}$.

In the case where $M_N \leq N$, another natural (and closely related) way of adding interfering external vertices is to consider the graph $\CM_N(\mathbf{d})$ and, to each vertex $i \in \llbracket N \rrbracket$, attach an extra (interfering) vertex of degree one, with probability $p_N := M_N / N$. 
One obvious difference from the previous construction is that each vertex has at most one interfering vertex attached; also, in the first construction, interfering vertices change the distribution of the original configuration model. 
Indeed, an interfering vertex is taking over from an original one in the first construction, instead of just being added afterwards, as in the second construction.
In the context of the Ising model, this version corresponds to having a (random) external field given by the spin of interfering vertices, say  i.i.d.\ Bernoulli.
Here again, one may ask whether there is spontaneous magnetization on a sequence of configuration models, depending on $p_N,N$ and the degree sequence $\mathbf{d}$.

To summarize, the general question is to determine how many (or how few) interfering vertices are enough to have some influence on a random individual.
Since the configuration model rooted at some randomly chosen vertex $o$ locally behaves like a branching process see~\cite[Sec.~4.2]{vdHRandomGraphs2} (and also Section~\ref{sec:backtoCM} below), we  consider in the present paper, as a toy model, the Ising model on a Galton--Watson tree with randomly attached interfering vertices, \textit{i.e.}\ a sparse external Bernoulli field.



\subsection{Ising model on a Galton--Watson tree: main results}
\label{sec:results}

Let $\mu$ be a distribution on $\NN$ and consider $\bT_n$ a random tree of depth $n \in \NN$ generated by a Branching Process with offspring distribution~$\mu$, stopped at generation~$n$; we will write $\bT=\bT_n$ if there is no confusion possible.
We will denote by $\rho$ its root and by $\partial \bT$ the set of its leaves. Also, we denote by $\bP$ the law of $\bT$, and we make the following assumption, which ensures in particular that the tree is super-critical.

\begin{assumption}
\label{hyp:branching}
The offspring distribution $\mu$ satisfy $\mu(0)=0$ and $\mu(1)<1$. In particular, $\nu := \sum_{d=1}^{\infty} d\mu(d) >1$.
\end{assumption}

The condition $\mu(0)=0$ ensures that a.s.\ there is no extinction, so the tree $\bT_n$ a.s.\ reaches depth~$n$, and it also ensure that it has no leaves except at generation~$n$. We could weaken this assumption and work conditionally on having no extinction, but we work with Assumption~\ref{hyp:branching} for technical simplicity; we refer to Section~\ref{sec:mu0} below for more comments.

We consider the Ising model~\eqref{def:Ising} on a tree $\bT$ with different possibilities for the external field or boundary condition:
\begin{itemize}
\item[\textbf{(a)}] With `$+$' boundary condition, meaning that $\sigma_v =+1$ (corresponding to an external field $h_v=+\infty$) if $v\in \partial \bT$; the results are well-known since the work of Lyons~\cite{Lyons89}, see Theorem~\ref{thm:Lyons} below.
We denote the Ising Gibbs measure $\P_{n,\gb}^+$ in this case.
\item[\textbf{(b)}] With (sparse) Bernoulli external field, in which $(h_v)_{v\in \bT}$ are i.i.d.\ Bernoulli random with parameter $p_n$, independent of $\bT$, whose law we denote by $\PP$.
We denote by~$\P_{n,\gb}^{(p_n)}$ the Ising Gibbs measure in this case.
\item[\textbf{(c)}] With (sparse) Bernoulli boundary external field, in which $(h_v)_{v\in \bT}$ are i.i.d.\ Bernoulli variables with parameter $\hat p_n := p_n \ind_{\{v\in \partial \bT\}}$; again, we denote their law by $\PP$, by a slight abuse of notation.
We denote by~$\P_{n,\gb}^{(\hat p_n)}$ the Ising Gibbs measure in this case.
\end{itemize}

\begin{figure}[tbp]
\centering
\includegraphics[scale=0.9]{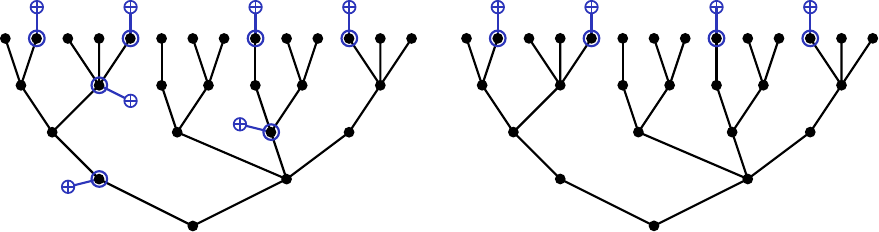}
\caption{\small Two Ising models on a Galton--Watson tree with Bernoulli external field $h_v\in \{0,1\}$: the vertices with $h_v=+1$ are circle and a `$+$' has been added to them.
On the left, the external field $h_v \in \{0,1\}$ lives inside the whole tree (model $\P_{n,\gb}^{(p_n)}$); on the right, the external field $h_v\in\{0,1\}$ lives only on the leaves (model $\P_{n,\gb}^{(\hat p_n)}$).}
\end{figure}

\begin{remark}
The second model \textbf{(b)} mimics the Ising model on a configuration model with sparse interfering vertices\footnote{Notice that this is not exactly how the local limit of the Ising model on the configuration model would look like. For instance, the configuration model converges locally to an uni-modular branching process, that is, the root has a different offspring distribution from the rest of the vertices.}; the third model \textbf{(c)} is interesting in itself and will serve as a point of comparison.
We stress that the parameter $p_n$ depends on the depth~$n$ of the tree, but is constant among vertices in the tree (no matter their generation).
\end{remark}

We then consider the \emph{root magnetization} with external field (or boundary condition) $\bc\in \{+,(p_n),(\hat p_n)\}$:
\begin{equation}
\mathfrak{m}_{n,\gb}^{\bc} := \mathfrak{m}_{n,\gb}^{\bc} (\rho) := \E_{n,\gb}^{\bc}[\sigma_\rho] \,,
\end{equation}
which is a random variable that depends on the realization of the tree $\bT$ and of the field $(h_v)_{v\in \bT}$ (if the latter is random).
We say that the \textit{root is asymptotically (positively) magnetized} for the model $\bc \in \{+, (p_n),(\hat p_n)\}$ at inverse temperature $\beta$ if 
\begin{equation}
\label{eq:rootmagnetization}
\lim_{\gep\downarrow 0}\liminf_{n\to\infty} \bP\otimes \PP\big( \mathfrak{m}_{n,\gb}^{\bc} >\gep \big)  =1 \,,
\end{equation}
where $\bP\otimes \PP$ denotes the joint law of $\bT$ and $(h_v)_{v\in \bT}$.
Conversely, the root is \textit{asymptotically not magnetized} if $\mathfrak{m}_{n,\gb}^{\bc}$ goes to $0$ in $\bP\otimes \PP$-probability.
Finally, note that these statements are equivalent if we replace the root magnetization by the log-likelihood ratio $r_{n,\gb}^{\bc} := r_{n,\gb}^{\bc}(\rho)$ of the root.

Obviously, one can compare the three models \textbf{(a)}, \textbf{(b)}, \textbf{(c)} above, \textit{i.e.}\ external field or boundary condition $\bc\in\{+,(p_n),(\hat p_n)\}$: indeed, by the Griffiths--Kelly--Sherman (GKS) inequality~\cite{griffiths67,KellySherman68}, we have that $\mathfrak{m}_{n,\gb}^{+} \geq \mathfrak{m}_{n,\gb}^{(\hat p_n)}$ and $ \mathfrak{m}_{n,\gb}^{(p_n)}\geq \mathfrak{m}_{n,\gb}^{(\hat p_n)}$.
We now state our main results.

\paragraph*{(a) With a plus boundary condition on the leaves.}
First of all, we recall the seminal result from Lyons~\cite{Lyons89} about the phase transition of the Ising model on a tree; we state it here only in our simpler context of a Galton--Watson tree.
 Note that in this context of plus boundary condition, the magnetisation of the root $\mathfrak{m}_{n,\gb}^+$ is non-increasing in $n$, so the asymptotic magnetization $\mathfrak{m}^+_{\gb} := \lim_{n\to\infty} \mathfrak{m}_{n,\gb}^+$ is well-defined.

\begin{theorem}[\cite{Lyons89}]
\label{thm:Lyons}
Consider the Ising model on a Galton--Watson tree $\bT$ with plus boundary condition. Then we have root asymptotic magnetization~\eqref{eq:rootmagnetization} (with $\bc=+$) at inverse temperature $\beta$ \emph{if and only if} $\nu \tanh(\gb) >1$, where we recall that $\nu$ is the mean offspring number.
\end{theorem}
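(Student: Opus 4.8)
The plan is to reduce the question to a problem about tree recursions for the log-likelihood ratio and then to apply a first/second-moment (capacity) argument on the Galton--Watson tree. First I would set up the fundamental recursion: for a vertex $v$ at generation $k$ with children $v_1,\ldots,v_d$ in $\bT$, the log-likelihood ratio $r_{n,\gb}^+(v)$ of the subtree rooted at $v$ (with plus boundary condition at generation $n$) satisfies
\begin{equation}
\label{eq:rec-lyons}
r_{n,\gb}^+(v) = \sum_{i=1}^{d} \psi_\gb\big( r_{n,\gb}^+(v_i) \big), \qquad \psi_\gb(r) := \log \frac{\cosh(\gb) e^{r} + \sinh(\gb)}{\sinh(\gb) e^{r} + \cosh(\gb)} = 2\,\atanh\!\big( \tanh(\gb)\tanh(r/2) \big),
\end{equation}
with boundary values $r_{n,\gb}^+(v) = +\infty$ for $v \in \partial\bT$. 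The map $\psi_\gb$ is increasing, concave on $\RR_+$, satisfies $\psi_\gb(0)=0$, $\psi_\gb(+\infty) = \log\coth(\gb) =: 2\atanh(\tanh\gb)$ finite, and crucially $\psi_\gb'(0) = \tanh(\gb)$ and $\psi_\gb(r) \le \tanh(\gb)\, r$ for $r\ge 0$ by concavity. Monotonicity in $n$ (already noted in the excerpt) makes $\mathfrak{m}^+_\gb$ well-defined, so the statement reduces to deciding whether $\lim_n r_{n,\gb}^+(\rho) > 0$.

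For the ``only if'' direction (no magnetization when $\nu\tanh\gb \le 1$), I would use the linear upper bound $\psi_\gb(r)\le \tanh(\gb) r$ in \eqref{eq:rec-lyons} to get, for $R_k := \EE_{\bP}$-type control, $\EE[\sum_{v \in \text{gen }k} r_{n,\gb}^+(v)]$ bounded by iterating: summing the recursion over a generation and taking expectations over the offspring gives a multiplicative factor $\nu\tanh(\gb)$ per generation, so $\EE_{\bP}[r_{n,\gb}^+(\rho)] \le (\nu\tanh\gb)^{n}\cdot \EE[\#\partial\bT \text{ weight}]$ — one must be slightly careful, the clean statement is that $\EE_{\bP}[r_{n,\gb}^+(\rho)]$ stays bounded (in fact $\to 0$) when $\nu\tanh\gb<1$, using that the boundary contribution at generation $n$ through a fixed vertex is $\le \nu^n \cdot (\log\coth\gb)$ but each gets multiplied by $(\tanh\gb)^n$; at criticality $\nu\tanh\gb=1$ one needs the strict concavity gap, i.e. that $\psi_\gb(r) \le \tanh(\gb) r - c\, r^{?}$ is not quite enough and instead one invokes that a critical branching random walk with this displacement drifts to $0$, or more simply cites that $r_{n,\gb}^+(\rho)\to 0$ a.s. by a direct submartingale/Seneta--Heyde type argument. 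Then Markov's inequality gives $r_{n,\gb}^+(\rho)\to 0$ in probability, hence no magnetization.

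For the ``if'' direction ($\nu\tanh\gb>1$ implies magnetization), the key tool is the non-linear capacity / second-moment method on the tree, which is exactly what Lyons introduced. I would fix a small $\gep>0$, truncate $\psi_\gb$ from below by a linear-ish function on $[0,\eta]$ for small $\eta$ (using $\psi_\gb(r)\ge \tanh(\gb) r - C r^3$ near $0$, or a one-sided affine minorant $\psi_\gb(r)\ge a r$ valid on a bounded range for some $a$ with $\nu a>1$), and show that the recursion \eqref{eq:rec-lyons} keeps $r_{n,\gb}^+(\rho)$ bounded below by a positive constant with probability bounded away from $0$. Concretely, one compares the recursion to a branching-process/percolation event: when $\nu\tanh\gb>1$ there is a.s. an infinite ``open'' sub-tree along which the accumulated field survives, and a Paley--Zygmund / capacity estimate on $\bT$ — controlling $\EE_{\bP}[r_{n,\gb}^+(\rho)]$ from below and $\EE_{\bP}[(r_{n,\gb}^+(\rho))^2]$ from above via the second-moment over pairs of boundary vertices whose overlap is governed by the tree's branching number — yields $\bP(r_{n,\gb}^+(\rho) > \gep') \ge c >0$ uniformly in $n$, hence \eqref{eq:rootmagnetization}.

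The main obstacle is the ``if'' direction at and near criticality: the linear comparison loses exactly the information needed, so one must handle the nonlinearity of $\psi_\gb$ carefully — either through Lyons' electrical-network/non-linear-capacity characterization (which converts the survival of magnetization into positivity of a capacity that is finite iff $\nu\tanh\gb>1$), or through a quantitative second-moment bound that tracks the concavity correction. Since Theorem~\ref{thm:Lyons} is quoted from \cite{Lyons89}, I would in practice cite that argument rather than reprove it, but the sketch above is the route I would take to make it self-contained: recursion \eqref{eq:rec-lyons}, linear upper bound for the subcritical/critical side, and a non-linear capacity lower bound for the supercritical side.
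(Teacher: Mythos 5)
The paper itself does not prove Theorem~\ref{thm:Lyons}: it is quoted from~\cite{Lyons89}, and the two tools you invoke (the tree recursion and the capacity comparison) are exactly what the paper recalls in~\eqref{eq:Lyonsiteration} and Theorem~\ref{thm:PemPer} and then uses for its \emph{new} results. So "cite Lyons", which is your fallback, is precisely what the paper does; judged as a self-contained sketch, your route is the standard one, but two points need fixing. First, your explicit formula for $\psi_\gb$ is wrong and inconsistent with the properties you then use. The correct recursion function is the paper's $g_\gb(x)=\log\big(\tfrac{e^{2\gb}e^{x}+1}{e^{2\gb}+e^{x}}\big)=2\atanh\big(\tanh(\gb)\tanh(x/2)\big)$, which indeed has $g_\gb'(0)=\tanh(\gb)$, is concave and increasing on $\RR_+$, and satisfies $g_\gb(+\infty)=2\gb$. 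The function you wrote, $\log\tfrac{\cosh(\gb)e^{r}+\sinh(\gb)}{\sinh(\gb)e^{r}+\cosh(\gb)}$, is \emph{not} equal to $2\atanh(\tanh\gb\tanh(r/2))$: its slope at $0$ is $e^{-2\gb}$, not $\tanh\gb$, and its limit at $+\infty$ is $\log\coth\gb$, whereas $2\atanh(\tanh\gb)=2\gb\neq\log\coth\gb$. Since your argument only uses the qualitative facts (slope $\tanh\gb$ at $0$, concavity, finiteness at $+\infty$), this is a repairable slip in constants, but as displayed the identity is false.

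Second, the two delicate steps are asserted rather than proved. At criticality $\nu\tanh\gb=1$ you do not need branching random walk or Seneta--Heyde: take expectations in the recursion (stopping one generation above the leaves, where the value is $2\gb$ times the offspring number), apply Jensen via the concavity of $g_\gb$ to get $y_k\le \nu\, g_\gb(y_{k+1})$ with $y_{n-1}$ bounded, and use that $x\mapsto \nu g_\gb(x)$ is strictly below the identity for $x>0$; iterating $n-1$ times from a bounded value forces $y_0\to 0$. This is exactly the fixed-point argument the paper runs in the critical case of Proposition~\ref{prop:upperLyons}, and it closes the gap you leave open. In the supercritical direction, the Paley--Zygmund/capacity step is only gestured at ("an infinite open sub-tree along which the accumulated field survives"): to make it self-contained you must either reproduce Lyons' cutset/conductance argument or combine Theorem~\ref{thm:PemPer} with a lower bound on $\capa_{\frac32}$ of $\bT_n$ with resistances $\tanh(\gb)^{-|u|}$, e.g.\ via Thomson's principle and the uniform flow as in Section~\ref{sec:Thomson}, truncating the offspring law if needed so that the relevant second (or $q$-th) moments are finite — a point your sketch does not address, while the theorem as stated carries no moment hypothesis beyond Assumption~\ref{hyp:branching}. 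In short: right skeleton, wrong explicit kernel, and the critical and supercritical halves still need the actual arguments (or, as the paper does, the citation).
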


The general result in~\cite{Lyons89} holds for a generic infinite tree $\bT$: there is root magnetization if and only if $\mathrm{br}(\bT) \tanh(\gb) >1$, where $\mathrm{br}(\bT)$ is the so-called branching number of the tree $\mathbf{T}$ which encodes some of its growth properties; we do not give a formal definition here but we refer to~\cite[\S3.2]{LyonsPeres}, and we stress that we have $\mathrm{br}(\bT) = \nu$ for branching processes.
In other words, Theorem~\ref{thm:Lyons} identifies the critical temperature for the Ising model on a tree, $\gb_c := \inf\{\beta \mid \mathfrak{m}^+_{\gb} >0 \} = \tanh^{-1}(\frac{1}{\mathrm{br}(\bT)})$; it also shows that there is no root magnetization at $\gb=\gb_c$.

\paragraph*{(b) With a sparse Bernoulli external field inside the tree.}
Let $(p_n)_{n\geq 0}$ be a sequence of parameters in $[0,1]$. For each $n$, we let $\bT_n$ be a GW tree up to generation $n$ and we let $(h_v)_{v\in \bT_n}$ be i.i.d.\ Bernoulli random variables of parameter~$p_n$, independent of~$\bT_n$.
Then, we have the following result, which is the main goal of this article.

\begin{theorem}
\label{thm:main}
Suppose that Assumption~\ref{hyp:branching} holds and that $\mu$ has a finite moment of order $q$ for some $q>1$.
Consider the Ising model on $\bT_n$ with sparse Bernoulli external field, with $\lim_{n\to\infty} p_n = 0$. Then we have asymptotic root magnetization~\eqref{eq:rootmagnetization} (with $\bc=(p_n)$) at inverse temperature~$\beta$ \emph{if and only if} 
\[
\nu \tanh(\gb) >1 \qquad \text{ and }\qquad  \liminf_{n\to\infty} (\nu \tanh(\gb))^n p_n >0 \,,
\]
where $\nu$ is the mean offspring number.
In the case where $\liminf_{n\to\infty} p_n >0$, the root is asymptotically magnetized.
\end{theorem}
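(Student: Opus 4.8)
The plan is to reduce the question to an Ising model with plus boundary condition on a pruned tree, then to run a first--moment argument for the ``only if'' direction and a second--moment (FK/percolation) argument for the ``if'' direction. Since a vertex $v$ with $h_v=1$ is equivalent to attaching to $v$ a pendant vertex with frozen spin $+1$, the root magnetisation under $\P_{n,\gb}^{(p_n)}$ on $\bT_n$ equals that of the plus--boundary Ising model on the tree $\hat\bT_n^{*}$ obtained from $\bT_n$ by hanging a pendant $+$ leaf on each marked vertex and deleting every maximal subtree carrying no mark (which, by the recursion below, feeds a zero effective field and may be removed). Writing $x_v$ for the half log--likelihood ratio of $\sigma_v$ given the subtree at $v$, one has $\mathfrak{m}_{n,\gb}^{(p_n)}=\tanh(x_\rho)$ and $x_v=\gb h_v+\sum_{u\to v}g_\gb(x_u)$ with $g_\gb(x):=\atanh\big(\tanh(\gb)\tanh(x)\big)$, which is increasing and concave on $\RR_+$, with $g_\gb(0)=0$, $g_\gb(x)\leq\tanh(\gb)x$, $g_\gb\leq\gb$, and $g_\gb(x)=\tanh(\gb)x-c_\gb x^{3}+O(x^{5})$ near $0$ for some $c_\gb>0$. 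A preliminary step identifies $\bT_n^{*}$ as an inhomogeneous, $n$--dependent branching process: if $q_j^{(n)}$ denotes the probability that a GW tree of depth $j$ with a $\mathrm{Bernoulli}(p_n)$ field carries a mark, then $q_0^{(n)}=p_n$, $1-q_j^{(n)}=(1-p_n)G_\mu(1-q_{j-1}^{(n)})$ (with $G_\mu$ the offspring generating function), whence $q_j^{(n)}$ is of order $\min(1,\nu^{j}p_n)$; the generation--$k$ offspring law of $\bT_n^{*}$ is then essentially $\mu$ when $\nu^{n-k}p_n$ is large and becomes heavily thinned once it is of order one, and the martingale $Z_k/\nu^{k}\to W$ ($Z_k=$ size of generation $k$), controlled through Kesten--Stigum (valid thanks to the moment of order $q>1$), is used to make all of this quantitative.

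\paragraph*{Necessity of the conditions.}
A downward induction on $\bT_n$ (using that $g_\gb$ is increasing and $g_\gb(y)\leq\tanh(\gb)y$) gives $x_v\leq\gb\sum_{w\succeq v,\,h_w=1}(\tanh\gb)^{|w|-|v|}$, with $|\cdot|$ the generation, hence
\[
\EE\big[x_\rho\big]\;\leq\;\gb\,p_n\sum_{k=0}^{n}(\nu\tanh\gb)^{k}.
\]
If $\nu\tanh\gb<1$ this is $\leq\gb p_n/(1-\nu\tanh\gb)\to0$ since $p_n\to0$; if $\nu\tanh\gb>1$ but $\liminf_n(\nu\tanh\gb)^{n}p_n=0$, it is $O\big((\nu\tanh\gb)^{n}p_n\big)$ and so tends to $0$ along a subsequence; in both cases $\mathfrak{m}_{n,\gb}^{(p_n)}\to0$ in probability (along that subsequence in the second case), so the root is not asymptotically magnetised. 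The critical case $\nu\tanh\gb=1$ — where the bound only yields $\EE[x_\rho]\leq\gb p_n(n+1)$, while the condition $\liminf_n(\nu\tanh\gb)^np_n=\liminf_n p_n$ automatically fails — is the delicate one: there I would use the cubic refinement $g_\gb(x)\leq\tanh(\gb)x-\tfrac12 c_\gb x^{3}$ valid on a fixed neighbourhood of $0$, together with Jensen's inequality, in the one--dimensional recursion for $m_j:=\EE[\tanh(x_v)\mid|v|=n-j]$ (working with a bounded functional of $x_v$ since $x_v$ itself may have heavy tails), obtaining a self--improving bound of order $(\gb p_n)^{1/3}\to0$; equivalently, this is exactly the regime where the non--linear capacity of $\hat\bT_n^{*}$ with plus boundary tends to $0$.

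\paragraph*{Sufficiency.}
Assume $\nu\tanh\gb>1$ and $c_0:=\liminf_n(\nu\tanh\gb)^{n}p_n>0$. By the Edwards--Sokal coupling, $\mathfrak{m}_{n,\gb}^{(p_n)}$ equals the probability, in the FK($q=2$, $p_e=1-e^{-2\gb}$) model on $\hat\bT_n^{*}$ wired at the pendant leaves, that $\rho$ is connected to a pendant leaf; comparing FK with Bernoulli percolation of parameter $\tanh\gb=\tfrac{p_e}{p_e+2(1-p_e)}$ and observing that a pendant leaf at a marked $v$ is reached iff $v$ is reached in $\bT_n$ and the pendant edge is open, one gets
\[
\mathfrak{m}_{n,\gb}^{(p_n)}\;\geq\;\tanh(\gb)\,\PP^{\mathrm{perc}}_{\tanh\gb}\big(\mathcal C_\rho\cap\{v:h_v=1\}\neq\emptyset\mid\bT_n,(h_v)_{v\in\bT_n}\big),
\]
$\mathcal C_\rho$ being the percolation cluster of $\rho$ in $\bT_n$. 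Put $N:=|\mathcal C_\rho\cap\{v:h_v=1\}|$; on a tree one has $\EE^{\mathrm{perc}}[N]=\sum_{v:h_v=1}(\tanh\gb)^{|v|}$ and $\EE^{\mathrm{perc}}[N^{2}]=\EE^{\mathrm{perc}}[N]+\sum_{u\neq v,\,h_u=h_v=1}(\tanh\gb)^{|u|+|v|-|u\wedge v|}$. Grouping the off--diagonal terms by the generation $j$ of the most recent common ancestor and summing the convergent geometric series $\sum_{j\geq0}(\nu\tanh\gb)^{-j}$ (this is where $\nu\tanh\gb>1$ is used) bounds the off--diagonal part by $C_1\EE^{\mathrm{perc}}[N]^{2}$, so that $\PP^{\mathrm{perc}}(N\geq1)\geq\EE^{\mathrm{perc}}[N]/(1+C_1\EE^{\mathrm{perc}}[N])$. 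Using $Z_k\approx\nu^{k}W$ one shows that on an event of $\bP\otimes\PP$--probability tending to $1$ (the sums above concentrate around their $\bT_n$--conditional means because $\nu^{n}p_n\to\infty$) one has $\EE^{\mathrm{perc}}[N]\geq c\,W(\nu\tanh\gb)^{n}p_n\geq (c\,c_0/2)\,W$; the previous display and $W>0$ a.s.\ then force $\mathfrak{m}_{n,\gb}^{(p_n)}\geq\tanh(\gb)\min\!\big(\tfrac{1}{2C_1},c' W\big)$ on that event, whence $\bP\otimes\PP\big(\mathfrak{m}_{n,\gb}^{(p_n)}>\gep\big)\to\PP\big(\tanh(\gb)\min(\tfrac{1}{2C_1},c'W)>\gep\big)$, which tends to $1$ as $\gep\downarrow0$; this is \eqref{eq:rootmagnetization}.

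\paragraph*{The case $\liminf p_n>0$, and the main obstacle.}
If $p_*:=\liminf_n p_n>0$ there is a soft argument, uniform over all $\gb>0$: since $\mu(0)=0$ the first $L$ generations of $\bT_n$ contain at least $L+1$ vertices, each marked with probability at least $p_*-o(1)$, so with probability at least $1-(1-p_*+o(1))^{L+1}$ — which tends to $1$ as $L\to\infty$ — there is a marked vertex at some generation $\ell\leq L$, and then $x_\rho\geq g_\gb^{\circ\ell}(\gb)\geq g_\gb^{\circ L}(\gb)=:c_L>0$ (the $\ell$--fold composition of $g_\gb$); as $c_L\downarrow0$ this yields $\lim_{\gep\downarrow0}\liminf_n\bP\otimes\PP\big(\mathfrak{m}_{n,\gb}^{(p_n)}>\gep\big)=1$. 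I expect the main difficulty to be in the sufficiency part: proving the concentration of the bilinear sum entering $\EE^{\mathrm{perc}}[N^{2}]$ uniformly in $n$ while assuming only a moment of order $q>1$ — when $q<2$ the limit $W$ need not be square--integrable, so the second moment has to be handled after truncating the generation sizes — and, relatedly, establishing the pruned--tree statistics and the off--diagonal (capacity--type) bound in the inhomogeneous, $n$--dependent setting, which is not directly covered by the literature.
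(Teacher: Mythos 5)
Your necessity argument (Lyons' iteration, a first--moment bound, and a concave fixed--point/Jensen treatment of the critical case $\nu\tanh(\gb)=1$) is essentially the paper's proof of Proposition~\ref{prop:upperLyons}; your cubic correction $g_\gb(x)\leq\tanh(\gb)x-cx^{3}$ yields a bound of order $p_n^{1/3}$ where the paper uses a quadratic correction and gets $\sqrt{p_n}$, and either suffices since both vanish. Your handling of $\liminf_n p_n>0$ (a mark within the first $L$ generations plus the $L$-fold composition of $g_\gb$) is correct and even delivers the full statement~\eqref{eq:rootmagnetization}. For the sufficiency, however, you take a genuinely different route: Edwards--Sokal, stochastic domination of the wired random-cluster measure by Bernoulli percolation with parameter $\tanh(\gb)$, and a second--moment bound on the number $N$ of marked vertices in the percolation cluster of the root, instead of the paper's chain ``pruned tree $\Rightarrow$ inhomogeneous branching process (Section~\ref{sec:pruning}) $\Rightarrow$ Pemantle--Peres comparison with the $\frac32$-capacity (Theorem~\ref{thm:PemPer}) $\Rightarrow$ Thomson principle and Neveu's inequality (Section~\ref{sec:capacitypruned})''.

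That sufficiency part has a genuine gap under the stated hypotheses. The theorem assumes only a finite moment of order $q$ for some $q>1$, possibly $q<2$. Your key step is the bound (off-diagonal part of $\EE^{\mathrm{perc}}[N^{2}]$) $\leq C_1\,\EE^{\mathrm{perc}}[N]^{2}$, but the annealed expectation of $\sum_{u\neq v}(\tanh\gb)^{|u|+|v|-|u\wedge v|}$ involves, for each potential common ancestor $w$, the count of ordered pairs of distinct children of $w$, i.e.\ a factor $\E[X(X-1)]$, which is infinite when $\mu$ has no second moment; the geometric-series computation you invoke therefore breaks down, and the truncation you mention in your last paragraph is precisely the missing argument rather than a routine adjustment --- it is exactly what the paper circumvents by working with the nonlinear $p$-capacity for $p=\frac{q}{q-1}\geq 2$ and the $q$-variance inequality of Neveu (Lemma~\ref{lem:Neveu}). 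Moreover, even when second moments exist, \eqref{eq:rootmagnetization} demands that the magnetization be bounded away from $0$ with $\bP\otimes\PP$-probability tending to \emph{one}, not merely a positive annealed bound; since your Paley--Zygmund estimate is quenched in the tree and the marks, you need both a lower bound on $\sum_{v:h_v=1}(\tanh\gb)^{|v|}$ and an upper bound on the off-diagonal sum relative to its square, each holding on an event of probability $1-o(1)$. You assert this concentration (``the sums concentrate around their conditional means'') but give no proof; establishing these quenched estimates for the $n$-dependent, inhomogeneous structure is where the bulk of the paper's work lies (Propositions~\ref{prop:pruning}--\ref{prop:d_TV} and~\ref{prop:capacitypruned}), so as written the ``if'' direction is incomplete.
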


In other words, this theorem gives the exact speed at which $(p_n)_{n\geq 0}$ should decrease in order not to have root magnetization.
The first condition in the theorem comes from Lyons' Theorem~\ref{thm:Lyons}; the second condition shows that the sparsity of the Bernoulli field may somehow shift the critical point.
For instance, if $p_n = \alpha^n$ for some $\alpha \in (0,1)$, then one has root magnetization if and only if $\alpha \nu \tanh(\gb) \geq 1$, so the new critical inverse temperature is $\gb_c(\alpha)  \tanh^{-1}(\frac{1}{\alpha\nu})$. 
Note that in that case, the root is also magnetized at the critical temperature, contrary to what happens in Theorem~\ref{thm:Lyons}; this is due to the fact that the model can be compared to an Ising model on a modified tree and the reason behind the first-order transition is the structure of this modified tree, and more precisely, how its $p$-capacity behaves (see Remark~\ref{rem:order} for a heuristic computation).

\paragraph*{(c) With a sparse Bernoulli boundary external field on the leaves.}
We have a similar result when we put the sparse Bernoulli field only on the leaves.
As above, for each $n$, we let $\bT_n$ be a GW tree up to generation $n$ and we let $(h_v)_{v\in \bT_n}$ be i.i.d.\ Bernoulli random variables of parameter~$\hat p_n = p_n \ind_{\{v\in \bT_n\}}$, independent of $\bT_n$.

\begin{theorem}
\label{thm:second}
Suppose that Assumption~\ref{hyp:branching} holds and that $\mu$ has a finite moment of order $q$ for some $q>1$.
Consider the Ising model on $\bT_n$ with sparse Bernoulli boundary external field , with $\lim_{n\to\infty} p_n =0$.
Then we have asymptotic root magnetization~\eqref{eq:rootmagnetization} (with $\bc=(\hat p_n)$) at inverse temperature~$\beta$ \emph{if and only if} 
\[
\nu \tanh(\gb) >1 \qquad \text{ and }\qquad  \liminf_{n\to\infty} (\nu \tanh(\gb))^n p_n >0 \,,
\]
where $\nu$ is the mean offspring number.
In the case where $\liminf_{n\to\infty} p_n >0$, the root is asymptotically magnetized if and only if $\nu \tanh(\gb) >1$.
\end{theorem}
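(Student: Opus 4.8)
The plan is to follow the same route as for Theorem~\ref{thm:main}, exploiting that the boundary-field case is in fact slightly simpler. The key structural fact is that the model reduces to an Ising model with plus boundary condition on a random \emph{pruned} tree, which turns out to be an inhomogeneous, $n$-dependent branching process. \emph{Step 1 (pruning).} A leaf $v$ with $h_v=0$ contributes to the partition function only a multiplicative factor $2\cosh\beta$, independently of the spin of its parent, so summing out such leaves — and iterating, as new free leaves appear — does not change the law of the remaining spins. I would therefore introduce
\[
\bT_n^{*} := \{\, v\in\bT_n \;:\; v \text{ has a descendant leaf } w\in\partial\bT_n \text{ with } h_w=1 \,\},
\]
and observe that $\mathfrak{m}_{n,\beta}^{(\hat p_n)}(\rho)=0$ on $\{\rho\notin\bT_n^{*}\}$, while on $\{\rho\in\bT_n^{*}\}$ the quantity $\mathfrak{m}_{n,\beta}^{(\hat p_n)}(\rho)$ equals the root magnetization of the Ising model on $\bT_n^{*}$ with field $h_v=1$ on its leaves (which all sit at generation $n$); equivalently, after attaching to each such leaf a pendant edge towards a fixed $+$ spin, it is the root magnetization with plus boundary condition on the extended tree $\bar\bT_n^{*}$.

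\emph{Step 2 ($\bT_n^{*}$ as an inhomogeneous, $n$-dependent branching process).} I would then show that, conditionally on $\{\rho\in\bT_n^{*}\}$, the tree $\bT_n^{*}$ is a branching process whose offspring law at generation $k\in\{0,\dots,n-1\}$ is the $q_{n-k-1}$-thinning of $\mu$ conditioned to be $\geq 1$, where $q_m := \PP\otimes\bP(\text{a GW tree of depth }m\text{ carries a leaf with }h=1) = 1-\bE[(1-p_n)^{|\partial\bT_m|}]$; in particular its mean offspring at generation $k$ is $\nu_k^{(n)} = \nu\, q_{n-k-1}/q_{n-k}$, so $\prod_{k=0}^{n-1}\nu_k^{(n)} = \nu^n q_0/q_n = \nu^n p_n/q_n$ by telescoping. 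Using $\bE[|\partial\bT_m|]=\nu^m$, the fact that a finite $q$-th moment of $\mu$ with $q>1$ implies the $L\log L$ condition and hence $|\partial\bT_m|/\nu^m\to W$ with $W>0$ a.s., and a second-moment estimate, I would record that $q_m\leq p_n\nu^m$ always, that $q_m$ is bounded away from $0$ as soon as $p_n\nu^m$ is bounded away from $0$, and that $q_m\to1$ whenever $p_n\nu^m\to\infty$. Since the hypotheses $\nu\tanh\beta>1$ and $\liminf_n(\nu\tanh\beta)^np_n>0$ force $p_n\nu^n\geq(\tanh\beta)^{-n}\liminf_n(\nu\tanh\beta)^np_n\to\infty$, this gives $q_n\to1$, i.e.\ $\rho\in\bT_n^{*}$ with probability tending to $1$.

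\emph{Step 3 (Ising on $\bar\bT_n^{*}$ via tree recursions and non-linear capacities).} I would analyse the Ising model on $\bar\bT_n^{*}$ through the recursion $x_v=\sum_{w\to v}\psi(x_w)$ with $\psi(x)=\atanh(\tanh\beta\cdot\tanh x)$, $x_v=+\infty$ on the leaves, $\mathfrak{m}(\rho)=\tanh x_\rho$, using that $\psi$ is increasing and concave on $[0,\infty)$ and satisfies $\tanh\beta\cdot x\,(1-c x^2)\leq\psi(x)\leq\tanh\beta\cdot x$. For \emph{necessity}: $\nu\tanh\beta>1$ is immediate from the GKS inequality and Theorem~\ref{thm:Lyons} (as $\mathfrak{m}^{(\hat p_n)}\leq\mathfrak{m}^{+}$); and iterating $\psi(x)\leq\tanh\beta\cdot x$ from the root down gives, for the original model, $x_\rho\leq\beta(\tanh\beta)^n\sum_{v\in\partial\bT_n}h_v$, so $\bE\otimes\EE[\mathfrak{m}_{n,\beta}^{(\hat p_n)}(\rho)]\leq\beta(\nu\tanh\beta)^np_n$, and along a subsequence where $(\nu\tanh\beta)^np_n\to0$ this forces $\mathfrak{m}_{n,\beta}^{(\hat p_n)}(\rho)\to0$ in probability, precluding asymptotic magnetization. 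For \emph{sufficiency}: I would split $\bT_n^{*}$ at a generation $k^{*}$ chosen so that above $k^{*}$ (towards the root) the process has mean offspring bounded below by some $\nu'$ with $\nu'\tanh\beta>1$ — essentially $k^{*}=n-\lceil\log(1/p_n)/\log(\nu\tanh\beta)\rceil$ — the remaining generations near the leaves being ``thin''. Each vertex $v\in\bT_n^{*}$ at generation $k^{*}$ carries a descendant $h=1$ leaf, hence directly from the recursion a field $x_v\gtrsim(\tanh\beta)^{n-k^{*}}$ (or, when many such leaves lie below it, an order-one field by saturation), and a positive fraction of the $\asymp\nu^{k^{*}}W$ vertices at generation $k^{*}$ survive the pruning; propagating these fields up through the supercritical part multiplies the total field by $\asymp(\nu\tanh\beta)^{k^{*}}$ (the replacement of $\psi$ by its linearisation costing only a bounded factor since the fields stay bounded, and saturation only helping). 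Collecting the exponents — the $k^{*}$- and $(n-k^{*})$-dependent powers of $\nu$ and $\tanh\beta$ cancel by the choice of $k^{*}$ — one is left with $x_\rho\gtrsim\beta W(\nu\tanh\beta)^np_n$, bounded away from $0$ with probability bounded away from $0$ under the hypotheses; equivalently, this is the statement that the relevant non-linear capacity of $\bar\bT_n^{*}$ is of order $(\nu\tanh\beta)^np_n$. Together with necessity, and noting that the case $\liminf_n p_n>0$ is analogous and easier (the pruned tree is then essentially the whole GW tree, so only $\nu\tanh\beta>1$ matters), this gives the theorem.

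\emph{Main obstacle.} The hard part is the sufficiency in Step~3: pushing a lower bound on the root field through an inhomogeneous recursion whose offspring laws depend on $n$ and degenerate towards a \emph{near-critical} branching in the $\asymp\log(1/p_n)$ generations near the leaves — so the split at $k^{*}$ sits right at the edge of super/sub-criticality and the bookkeeping of exponents is tight. It breaks into (i) showing that replacing $\psi$ by its linearisation costs only a bounded multiplicative factor over the $\asymp n$ generations, which works because the relevant error is governed by $\sum_v x_v^2$ and this is summable since the fields grow geometrically away from the leaves; and (ii) obtaining concentration both for the number of generation-$k^{*}$ vertices of $\bT_n^{*}$ and for the number of $h=1$ leaves below them, which is precisely what the finite $q$-th moment of $\mu$ (with $q>1$) buys, so that the second-moment argument can be run despite the thinning-and-conditioning defining the offspring laws near generation $k^{*}$.
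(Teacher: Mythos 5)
Your Steps~1 and~2 and the necessity direction are correct and essentially coincide with the paper's route: the pruning reduction is Lemma~\ref{lem:tt*}, the identification of $\bT_n^*$ as an inhomogeneous, $n$-dependent branching process with zero-truncated thinned offspring laws is Proposition~\ref{prop:pruning} and Lemma~\ref{ZeroTBin} (your $q_m$ is the paper's $1-\bar\gamma_m$, and your telescoping of the means is the paper's formula for $M_{0,n}^*$), and the upper bound via $g_\beta(x)\le\tanh(\beta)x$ together with GKS and Theorem~\ref{thm:Lyons} is Proposition~\ref{prop:upperLyons} restricted to the boundary field.

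The genuine gap is in the sufficiency step, exactly at the point you flag as the main obstacle. The claim that propagating the fields up through the supercritical part ``multiplies the total field by $\asymp(\nu\tanh\beta)^{k^*}$, the replacement of $\psi$ by its linearisation costing only a bounded factor since the fields stay bounded, and saturation only helping'' is not justified and cannot stand as written. The linearisation $\psi(x)\le\tanh(\beta)\,x$ is an \emph{upper} bound; for a lower bound one only has $\psi(x)\ge\tanh(\beta)\,x\,(1+c_2x^2)^{-1/2}$, which loses a fixed multiplicative factor per generation as soon as the fields are of order one --- and they must reach order one far below the root for the conclusion to hold, so this loss accumulates over $\asymp k^*\to\infty$ generations and kills the estimate. (Relatedly, the stated conclusion $x_\rho\gtrsim\beta W(\nu\tanh\beta)^np_n$ cannot be literally true, since $x_\rho=\sum_{v\colon\rho\to v}\psi(x_v)\le 2\beta\,d(\rho)$ is tight while the right-hand side may diverge.) Your item (ii) --- concentration of the number of generation-$k^*$ vertices and of the $h=1$ leaves below them --- does not repair this: the difficulty is not the population sizes but how fluctuations of the tree (thin sub-trees, vertices with a single surviving child) degrade a lower bound through the concave recursion over $\asymp n$ generations; a first-moment bound on $x_\rho$ obtained by linearisation does not transfer to a bound in probability. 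This is precisely what the paper's capacity machinery supplies: Theorem~\ref{thm:PemPer} converts the problem into a lower bound on the $p$-capacity of $\bT_n^*$ with resistances $\tanh(\beta)^{-|u|}$ (with $p=\frac{q}{q-1}\ge2$), and Proposition~\ref{prop:capacitypruned} proves that bound via Thomson's principle with the uniform flow, Neveu's inequality controlling the $q$-moments of the inhomogeneous population martingales $W_{k,n}$, and the a.s.\ positivity of $\lim|T_n|/\nu^n$, all made summable by the phase-transition estimates on $\nu_k^*$, $\sigma_{q,k}^*$ and $M_{0,k}^*$ of Section~\ref{sec:pruning}. If you insist on a recursion-only argument, you would in effect have to reprove such a comparison (essentially Pemantle--Peres) from scratch; as written, your step (i) assumes the very estimate that needs proof.
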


\begin{remark}
We could obviously consider a more general sparse i.i.d.\ random external field. For instance if $(h_v)_{v\in \bT}$ are i.i.d.\ with distribution $(1-p_n)\delta_0 + p_n \mu_Y$ for some positive random variable $Y$, one can easily compare this model with a Bernoulli external field and obtain identical results (provided that $\mathbb{E}[Y]<+\infty$). We have chosen to focus on Bernoulli external fields for the simplicity of exposition.
\end{remark}

\subsection{Outline of the proof and organisation of the paper}

Let us outline our strategy of proof, which relies on standard tools for the Ising model on trees, namely Lyons' iteration~\cite{Lyons89} for the log-likelihood ratios (we recall it in Section~\ref{sec:Lyons}, see~\eqref{eq:Lyonsiteration}), and Pemantle--Peres~\cite{PemPer10} relation between the log-likelihood ratio and the non-linear $\frac32$-capacity of the tree, see Section~\ref{sec:PemPer} for an overview.

After a few preliminaries in Section~\ref{sec:prelim}, we prove in Section~\ref{sec:upper} the upper bound on the magnetization of the root.
More precisely, we use Lyons' iteration to derive an upper bound on the log-likelihood ratios for the Ising model with Bernoulli external field inside the whole tree (which dominates the case where the external field is only on the leaves).

Then, the starting point of the lower bound is presented in Section~\ref{sec:lower}.
First, we show in Section~\ref{sec:comparePruned} that the log-likelihood ratio of the root for the Ising model with Bernoulli boundary external field is equivalent to that of the Ising model with plus boundary external field on a modified tree, that we call \emph{pruned tree} since it corresponds to removing all branches that do not lead to some $h_v=+1$.
In other words, the Ising model with random (Bernoulli) boundary external field on $\bT_n$ corresponds to an Ising model with plus boundary condition on a random subtree $\bT_n^*$ of $\bT_n$, that we call the \emph{pruned tree}.
Then, we explain in Section~\ref{sec:PemPer} how one can relate the magnetization of the root of a tree for the Ising model with plus boundary condition to some $p$-capacity of the tree equipped with specific resistances, see Theorem~\ref{thm:PemPer}.
The rest of the paper then focuses on estimating the $p$-capacity of the pruned tree~$\bT^*_n$: this is summarized in Proposition~\ref{prop:capacitypruned}.
Section~\ref{sec:pruning} consists in studying fine properties of the pruned tree, 
which are crucially used in Section~\ref{sec:capacitypruned} to derive the $p$-capacity estimates.

More specifically, in Section~\ref{sec:pruning}, we show that under $\mathbf{P}\otimes \mathbb{P}$ the pruned tree is actually an \textit{inhomogeneous} Branching Process, whose offspring distributions $(\mu_k^*)_{0\leq k \leq n-1}$ are explicit (see~\eqref{defmuk}) and depend on the generation $k$ but also on the depth~$n$ of the tree --- in other words, we have a triangular array of offspring distributions.
Then, we show in Sections~\ref{sec:parameters}-\ref{sec:shape} that the pruned tree somehow exhibits a sharp phase transition: there exists some $k^*:= \log(p_n \nu^n)/\log \nu$ (which depends on~$n$ and go to $+\infty$ if $\liminf_{n\to\infty} p_n (\nu\tanh \gb)^n>0$), such that:
\begin{itemize}
\item if $k^*-k$ is large, then $\mu_k^*$ is very close to the original offspring distribution $\mu$;

\item if $k-k^*$ is large, then $\mu_k^*$ is very close to being a Dirac mass at $1$.
\end{itemize}
This statement is made precise (and quantitative) in Proposition~\ref{prop:d_TV}. 
Section~\ref{sec:shape} contains several additional technical results quantifying this phase transition for various quantities of the pruned tree (for instance the mean and variances of $\mu_k^*$), which are important for the last part of the proof.

\smallskip
Section~\ref{sec:capacitypruned} concludes the proof of the lower bound on the log-likelihood ratio in Theorem~\ref{thm:second} (which implies the lower bound in Theorem~\ref{thm:main}) by establishing Proposition~\ref{prop:capacitypruned}.
The proof relies on Thomson's principle, which, after a few technical estimates and some moment condition, allows us to obtain a lower bound for $\capa_{p}(\bT^*_n)$, thereby completing the proof.
For completeness, we also provide an upper bound on $\capa_{p}(\bT^*_n)$ in Proposition~\ref{prop:capacitypruned} (without requiring moment conditions); throughout the proof, we aim to present a general framework for estimating the $p$-capacity of any inhomogeneous Galton--Watson tree.

\subsection{Some further comments}

Let us now conclude this section with several comments on our results, suggesting for instances possible directions for further investigations.

\subsubsection{Comparison with a homogeneous but vanishing external field}

A first natural question is to know whether the same results as in Theorems~\ref{thm:main}-\ref{thm:second} would hold if one replaced the random Bernoulli external field $(h_v)_{v\in \bT}$ by its mean $(\mathbb{E}[h_v])_{v\in \bT}$.
This corresponds to considering the Ising model on a tree with a \emph{homogeneous} but vanishing external field (either inside the whole tree or only on the boundary).

In particular, we want to look at the two following (sequences of) Ising models on the tree $\bT_n$ of depth~$n$:
(i)~homogeneous external field $h_v\equiv p_n$ for all $v\in \bT_n$.
(ii)~homogeneous boundary external field $h_v\equiv  p_n$ for all $v\in \partial \bT$.

Using Lyons iteration as in Section~\ref{sec:upper} would yield that there is no root magnetization in the first model (hence in the second model) whenever $p_n\to 0$ and $p_n (\nu\tanh(\gb))^n \to 0$; we leave this as an exercise to the reader. 
Using our method, one could actually prove that if $p_n\to 0$ and $\liminf_{n\to\infty} p_n (\nu\tanh(\gb))^n >0$, then the root is asymptotically magnetized in the second model (hence in the first one). 

Let us explain how one should adapt our proof in Section~\ref{sec:capacitypruned} to derive such a result.
The main observation is that the second model can be compared with an Ising model with plus boundary condition on some modified \emph{elongated} tree $\tilde{\bT}_n$, where~$\tilde{\bT}_n$ is obtained from $\bT_n$ by adding ``straight branches'' of length $\ell = \log p_n/\log \tanh(\gb)$ to each leaf of $\bT_n$, \textit{i.e.}\ $\tilde{\bT}_n$ is an inhomogeneous Branching Process of depth $n+\ell$ with offspring distribution $\mu_k=\mu$ for generations $k\leq n-1$ and $\mu_k =\delta_{1}$ for $n\leq  k \leq n+\ell-1$.
Indeed, by Lyons' iteration, the log-likelihood ratio of vertices at generation $n$ of this elongated tree is proportional to $\tanh(\gb)^{\ell} = p_n$ (by our choice of $\ell$); this corresponds to an Ising model on~$\bT_n$ with boundary external field proportional to $p_n$.

Then, thanks to Pemantle--Peres~\cite{PemPer10} comparison theorem (see Theorem~\ref{thm:PemPer}), the problem is reduced to estimating the $\frac32$-capacity of $\tilde{\bT}_n$, which presents the same type of ``phase transition'' as the pruned tree.
For this, one may use similar calculations as in Section~\ref{sec:capacitypruned}. In particular, the same estimates obtained for the $p$-capacity of our pruned tree $\bT_n^*$ (cf.~Proposition~\ref{prop:capacitypruned}) hold for the elongated tree~$\tilde{\bT}_n$. 
One would therefore end up with the following result:
\begin{quote}
Consider the Ising model on the Galton--Watson tree $\bT_n$ with either (i)~homogeneous external field ${h_v\equiv p_n}$ for $v\in \bT_n$ or
(ii)~homogeneous boundary external field $h_v\equiv  p_n$ for $v\in \partial \bT_n$, with $\lim_{n\to\infty} p_n = 0$. Then, under the assumptions of Theorems~\ref{thm:main}-\ref{thm:second}, we have asymptotic root magnetization at inverse temperature~$\beta$ \emph{if and only if} 
\[
\nu \tanh(\gb) >1 \qquad \text{ and }\qquad  \liminf_{n\to\infty} (\nu \tanh(\gb))^n p_n >0 \,.
\]
\end{quote}

We do not develop further on this issue since it is not the main purpose of our paper, but this is a simpler model and hides no extra difficulty (in fact, $\tilde{\bT}_n$ is much easier to study than the pruned tree $\bT_n^{*}$), so we leave it as an exercise.

\subsubsection{About Assumption~\ref{hyp:branching} on the offspring distribution}
\label{sec:mu0}

Obviously, there are a few limitations on the generality of the offspring distribution~$\mu$ that we consider, the most important one being that $\mu(0) =0$, so that there is no ``internal leaf'' in $\bT$.
We have chosen to work with this condition, in analogy with the works of Lyons~\cite{Lyons89} and Pemantle--Peres~\cite{PemPer10}; this also simplifies the presentation.

One could in fact remove the assumption~$\mu(0)=0$ by considering a super-critical Galton--Watson tree $ \bT$, \textit{i.e.}\ assuming $\nu>1$, conditioned on survival: let us denote~$\widecheck{\bT}$ such a tree, and $\widecheck{\bT}_n$  its restriction to the first $n$ generations.
The tree $\widecheck{\bT}$ is infinite, its $n$-th generation growing like $\nu^n$, and there is an explicit decomposition of its law that we now  briefly recall; we refer to \cite[\S5.5]{LyonsPeres} for details.
If $G$ is the generating function of $\mu$, then the extinction probability $\varrho$ is the smallest fixed point of $G$, and  one can construct two reproduction laws~$\hat \mu$, $\tilde \mu$ with respective generating functions $\hat G(s) = \frac{1}{1-\varrho} [G(\varrho + (1-\varrho)s)-\varrho]$ and $\tilde G(s) = \frac{1}{1-\varrho} G(\varrho s)$.
Note that~$\hat \mu$ verifies Assumption~\ref{hyp:branching} with mean $\hat G'(1) = G'(1)=\nu$, and that~$\tilde \mu$ is a subcritical reproduction law.
Then, \cite[Prop.~5.23]{LyonsPeres} shows that the tree $\widecheck{\bT}$ can be constructed as follows:
\begin{itemize}
\item[(i)] consider a Galton--Watson tree $\hat \bT$ with reproduction law $\hat \mu$ (\textit{i.e.}\ super-critical, with no internal leaf);
\item[(ii)] independently, to each leaf $v\in \hat \bT$, attach a (random) number $U_v$ subcritical Galton--Watson trees $\tilde \bT$ with reproduction law $\tilde \mu$, where $U_v$ has an explicit distribution (that depends on the number of children of~$v$ in~$\hat \bT$).
\end{itemize}

Going back to our problem, Theorems~\eqref{thm:main} and \ref{thm:second} also hold with $\widecheck{\bT}$ in place of $\bT$.
Indeed, one can check that the upper bound that we present in Section~\ref{sec:upper}, which simply relies on a tree iteration, is still valid.
For the lower bound, thanks to the GKS inequality, one can simply consider the Ising model with sparse Bernoulli external field on $\hat \bT_n$ instead of $\widecheck{\bT}_n$.
This way, one falls back to a model for which Assumption~\ref{hyp:branching} is verified, with the mean of the offspring distribution also equal to $\nu$: we can then simply apply Theorems~\ref{thm:main} and~\ref{thm:second}.

\subsubsection{Starting with an inhomogeneous, $n$-dependent, Galton--Watson tree}

Keeping in mind our application to the Configuration Model (see Section~\ref{sec:backtoCM} below), let us mention that our method appear to be robust to the case where the initial Galton--Watson tree $\bT_n$ of depth $n$ is homogeneous but with an offspring distibution $\mu^{(n)}$ that depends on $n$.
In particular, we believe that our results hold simply by replacing $\nu$ with~$\nu^{(n)}$, the mean of the offspring distibution~$\mu^{(n)}$, provided that the means $(\nu^{(n)})_{n\geq 0}$, resp.\  the variances $(\sigma_{\mu^{(n)}}^2)_{n\geq 0}$, remain bounded away from $1$, resp.\ $0$, and $+\infty$.

The case where one starts with a Galton--Watson tree $\bT_n$ of depth $n$ which is inhomogeneous with offspring distribution $(\mu_k^{(n)})_{0\leq k \leq n-1}$ should be analogous. One would need to replace the quantity $\nu^n$ with $\prod_{k=0}^{n-1}\nu_k^{(n)}$, where $\nu_k^{(n)}$ is the mean of the offspring distibution~$\mu^{(n)}_k$; again, one should for instance assume that the means $(\nu^{(n)}_k)_{0\leq k \leq n-1, n\geq 1}$, resp.\ the $q$-moments $(m_{q,k}^{(n)})_{0\leq k \leq n-1, n\geq 1}$, remain bounded away from $1$ and $+\infty$.

\subsubsection{Back to the configuration model}
\label{sec:backtoCM}

Coming back to the configuration model, we may try to use our toy model to make some predictions.
Consider the configuration model recalled in Section~\ref{sec:ConfigModel}, with $N$ vertices and degree sequence $\mathbf{d} = (d_i)_{i \in \llbracket N\rrbracket}$. 
Denote $N_k := |\{i\in \llbracket N \rrbracket , d_i=k\}|$ the number of vertices of degree $k$ and let $D_N$ be a random variable whose distribution is given by $\P(D_N=k) = N_k/N$, which corresponds to the degree of a vertex choosen uniformly at random.
A standard and natural assumption on the model, see~\cite[\S~1.3.3]{vdHRandomGraphs2}, is that, as $N\to\infty$, $D_N$ converges in distribution to some random variable $D$ and that we also have convergence of the first two moments of $D_N$ to those of $D$ (assuming that $\E[D^2]<+\infty$).

Then, it is known from \cite[Thm.~4.1]{vdHRandomGraphs2} that, choosing a vertex $o$ uniformly at random and rooting $\CM(\mathbf{d})$ at $o$, the rooted $\CM(\mathbf{d})$ locally converges (\textit{i.e.}\ can be locally coupled with) to a branching process with offspring distribution $\mu$ (except the root which has offspring distribution $D$), where $\mu$ is the distribution of $D^*-1$, with $D^*$ is the size-biased version of $D$; more precisely $\mu(k) := \frac{k+1}{\E[D]} \P(D=k+1)$.

Therefore, Assumption~\ref{hyp:branching} corresponds to having $\P(D=1)=0$, $\P(D=2)<1$ and also $\nu := \sum_{k=1}^{\infty} k\mu(k) = \frac{1}{\E[D]} \E[D(D-1)] >1$ (note that $\nu<+\infty$ if $\E[D^2]<+\infty$).
The assumption that $\mu$ admits a finite second moment translates into the requirement that $\E[D^3]<+\infty$.
Then, we can try to apply our results simply by analogy, \textit{i.e.}\ identifying the configuration model rooted at a random vertex $v$ with a Galton--Watson tree $\bT_n$ with offspring distribution $\mu$ (except at its root) and depth $n = \frac{1}{\log \nu} \log N$; the choice of $n$ is such that the number of vertices in $\bT_n $ is roughly $\nu^n =N$.
Our results then translate into the fact that the vertex $o$ is asymptotically magnetized if and only if $\liminf_{n\to\infty} p_n \nu^n \tanh(\gb)^n >0$, or since $p_n = M_N/N$, if and only if $\liminf_{N\to\infty} M_N N^{\alpha} >0$, where $\alpha :=  \frac{1}{\log \nu} \log \tanh(\gb)$.

However, the approximation of the configuration model rooted at a random vertex~$o$ with a Galton--Watson tree only works up to depth $\tilde n=c \log N$ for some constant $c>0$, provided that the maximal degree $d_{\mathrm{max}} = \max_{v\in \llbracket N \rrbracket} d_v$ verifies $d_{\rm max} = O(N^{a})$ for some $a<1$.
This is a reformulation of~\cite[Lem.~3.3 and Rem.~3.4]{vdH21} in our context: a coupling can be made between the configuration model $\CM(\mathbf{d})$ rooted at a random vertex $o$ and a ($N$-dependent) Branching Process up to $m_N = o(\sqrt{N/d_{\rm max}})$ vertices; if $d_{\rm max} = N^{o(1)}$, this corresponds to a depth $\tilde n = \frac{1}{2\log\nu} \log N$ for the Galton--Watson tree.
The fact that loops start to appear in the graph at some point breaks Lyons iteration's argument and new ideas are needed.
However, one should be able to obtain at least some bounds on the magnetization of~$o$; in particular, the fact that our result is robust to the case of a $n$-dependent Galton--Watson tree could prove useful when working with the coupling mentioned above.

A natural (weak form of the) conjecture is the following.
\begin{conjecture}
For the Ising model on the configuration model $\CM(\mathbf{d})$ with $M_N$ interfering external `$+$' vertices, there exists some $\tilde \alpha>0$, depending only on the inverse temperature $\beta$ and the mean $\nu$, such that a randomly chosen vertex~$o$ in $\CM(\mathbf{d})$ is:
\begin{itemize}
\item asymptotically \textit{magnetized} if $M_N \geq N^{-\tilde\alpha+\gep}$ for some $\gep>0$;
\item asymptotically \textit{not magnetized} if  $M_N \leq N^{-\tilde\alpha-\gep}$ for some $\gep>0$.
\end{itemize}
\end{conjecture}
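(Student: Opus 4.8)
\medskip
\noindent\emph{A proof strategy, and the main obstacle.}
The natural plan is to transfer the estimates of Theorems~\ref{thm:main}--\ref{thm:second} to $\CM(\mathbf{d})$ through the local coupling with a branching process. Fix $\gb$ with $\nu\tanh(\gb)>1$ (otherwise Lyons' criterion, Theorem~\ref{thm:Lyons}, already forbids root magnetization on the limiting tree, and one expects no sparse field to magnetize $o$ either), and let $n_N:=\lfloor\tfrac{1}{2\log\nu}\log N\rfloor$ be the depth up to which a uniformly rooted $\CM(\mathbf{d})$ can be coupled, with probability $1-o(1)$, with an $N$-dependent (unimodular) Galton--Watson tree $\bT_{n_N}$, by~\cite[Lem.~3.3]{vdH21} together with~\cite[Thm.~4.1]{vdHRandomGraphs2}. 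On the coupling event, discarding the interfering vertices outside the coupled ball $B(o,n_N)$ and the edges leaving it only decreases the magnetization, so the GKS inequality gives $\mathfrak{m}^{h}_{\CM(\mathbf{d}),\gb}(o)\geq\mathfrak{m}^{(p_N)}_{\bT_{n_N},\gb}(o)$ with $p_N=M_N/N$. Applying Theorem~\ref{thm:main} at depth $n_N$, and writing $\tilde\alpha:=\tfrac{\log(1/\tanh\gb)}{\log\nu}\in(0,1)$ so that $(\nu\tanh\gb)^{n_N}=N^{(1-\tilde\alpha)/2+o(1)}$, the right-hand side stays bounded away from $0$ (with probability $\to1$) as soon as $p_N(\nu\tanh\gb)^{n_N}\to\infty$, that is, as soon as $M_N\geq N^{(1+\tilde\alpha)/2+\gep}$. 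This already recovers the ``magnetized'' half of the conjecture, but only on the range $M_N\geq N^{(1+\tilde\alpha)/2+\gep}$, which is narrower than the claimed $M_N\geq N^{\tilde\alpha+\gep}$.

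\medskip
Reaching $M_N\geq N^{\tilde\alpha+\gep}$ would require running a recursion at depth $\tfrac{1}{\log\nu}\log N$, past the threshold at which the first cycles of $\CM(\mathbf{d})$ appear, where Lyons' iteration~\eqref{eq:Lyonsiteration} and the $p$-capacity estimates of Proposition~\ref{prop:capacitypruned} no longer apply verbatim. A possible substitute is the random-cluster (FK) representation: adjoining a ghost vertex $g$ wired to every interfering vertex, $\mathfrak{m}^{h}_{\CM(\mathbf{d}),\gb}(o)$ equals the probability that $o$ is connected to $g$ in the associated random-cluster model on $\CM(\mathbf{d})$ augmented by a single ``collector'' vertex of degree $M_N$; bounding this connection probability from below and above would yield both directions.

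\medskip
The step I expect to be the genuine obstacle is, however, conceptual rather than technical. Unlike the free-boundary Gibbs measure on the limiting tree --- which is extremal at every temperature, so that a single plus vertex influences only an $O(1)$-neighbourhood and many interfering vertices (exponentially many in the depth) are needed to propagate magnetization --- the graph $\CM(\mathbf{d})$ is an expander as soon as $\nu$ is bounded away from $1$ and the degree sequence is not too degenerate, and it therefore genuinely exhibits long-range order for $\gb>\gb_c$: under free boundary conditions its local weak limit is the symmetric mixture $\tfrac12\mu^{+}+\tfrac12\mu^{-}$ of the plus and minus tree states, see~\cite{MontanariMosselSly,DemboMontanari10} (in the random-cluster picture, a giant cluster emerges). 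Consequently even a \emph{single} interfering plus vertex tilts this mixture towards the plus phase and forces $\mathfrak{m}^{h}_{\CM(\mathbf{d}),\gb}(o)\to(m^{*}_{\gb})^{2}\tanh\gb>0$ for a typical root $o$, where $m^{*}_{\gb}$ denotes the spontaneous magnetization. In particular the ``magnetized'' half of the conjecture holds, effortlessly, on the whole range $M_N\geq1$; but by the same token the ``not magnetized'' half --- hence the existence of a nontrivial polynomial threshold $\tilde\alpha$ --- can hold at best for degree sequences making $\CM(\mathbf{d})$ non-expanding, and is presumably false in the regime considered here. The tree heuristic correctly captures the \emph{local} effect of the sparse field but ignores the macroscopic $\pm m^{*}_{\gb}$ symmetry, which is precisely the feature separating $\CM(\mathbf{d})$ from its Galton--Watson approximation; clarifying this dichotomy --- and accordingly reformulating the conjecture --- is, in my view, the crux of the problem.
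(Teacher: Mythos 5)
The statement you are addressing is a \emph{conjecture}: the paper contains no proof of it, only the informal discussion in Section~\ref{sec:backtoCM} (coupling of the rooted $\CM(\mathbf{d})$ with a branching process up to depth $\tfrac{1}{2\log\nu}\log N$ via \cite{vdH21,vdHRandomGraphs2}, combined with Theorems~\ref{thm:main} and~\ref{thm:reduced}), and indeed the authors themselves leave the value of the threshold exponent open. Your first two paragraphs reproduce exactly that heuristic, so there is no ``paper proof'' to compare against; note also that your transfer step is itself only heuristic, not just restricted in range: Theorem~\ref{thm:main} is proved for a fixed offspring law $\mu$, not for the $N$-dependent unimodular tree produced by the coupling (the paper only asserts, without proof, that its method is robust to this), and the interfering vertices attach through half-edges (size-biased, possibly several to one vertex), which is not the i.i.d.\ Bernoulli field of model \textbf{(b)}. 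Your computation that this route only reaches $M_N\geq N^{(1+\tilde\alpha)/2+\gep}$ is, however, the correct quantitative reading of the authors' own remark that the half-depth coupling loses a factor in the exponent.

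The substantive part of your proposal is the final objection, and in my view it is essentially right rather than a flaw to be repaired: with ``magnetized'' taken in the sense of~\eqref{eq:rootmagnetization} for the Gibbs mean $\mathfrak{m}(o)$, the ``not magnetized'' half is in real tension with phase coexistence. In the only regime where the conjectured threshold is non-trivial, $\nu\tanh(\gb)>1$, the free-boundary measure on $\CM(\mathbf{d})$ has long-range order (equivalently, its local weak limit is the symmetric mixture \cite{MontanariMosselSly,DemboMontanari10}), and then a single interfering vertex attached at $v_0$ already magnetizes a typical $o$: in the FK representation with a ghost $g$ wired to $v_0$ one gets $\mathfrak{m}(o)=\P(o\leftrightarrow g)\geq c_\gb\,\langle\sigma_o\sigma_{v_0}\rangle \geq c_\gb'\,(m^*_\gb)^2$ by FKG and uniqueness of the giant random-cluster component, which is your $(m^*_\gb)^2\tanh\gb$ up to constants. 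Two caveats. First, as written this is a heuristic refutation, not a disproof: one must assemble the two-phase/long-range-order statements for $\CM(\mathbf{d})$ under the paper's degree assumptions (available for random regular graphs, and extractable with work from the cited literature), so ``presumably false'' is the correct level of claim, and the FK route you sketch is the natural way to make it rigorous. Second, two side assertions are inaccurate and should be dropped: the free state on the infinite tree is \emph{not} extremal at every temperature (only below the spin-glass threshold $\nu\tanh^2(\gb)=1$, see \cite{BRZ95,Ioffe96}) --- the relevant tree-side fact is simply that a plus vertex at distance $d$ shifts the root log-likelihood by $O(\tanh(\gb)^d)$ --- and expansion of $\CM(\mathbf{d})$ is not the needed input, long-range order is. With these corrections your conclusion stands: as literally formulated (note also the sign slip, since $N^{-\tilde\alpha-\gep}<1$ makes the second bullet vacuous), the conjecture needs reformulation, e.g.\ for the signed sparse field the authors discuss, or for the magnetization response beyond the $O(1)$ phase-selection tilt, before a polynomial threshold in $M_N$ is meaningful.
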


\noindent
In other words, the threshold for having asymptotic root magnetization should be at a polynomial number of ``interfering'' vertices $M_N = N^{-\tilde\alpha +o(1)}$.
It is natural to guess from our upper bound on the magnetization that  $\tilde \alpha \leq \alpha$ with $\alpha := \frac{1}{\log \nu} \log \tanh(\gb)$. 
On the other hand, if $d_{\rm max} = N^{o(1)}$, applying the exploration process up to depth $\tilde n \simeq \frac{1}{2\log\nu} \log N$ and using our lower bound on the magnetization from Theorem~\ref{thm:reduced} would yield that $\tilde \alpha \geq \frac12 \alpha$.
Overall, it is not clear whether $\tilde \alpha=\alpha$ or not (we personally believe so); this is an interesting question.

\subsubsection{About extremal Gibbs states and free boundary conditions}

A question that has been extensively studied both in the physics and mathematics literature, is that of the extremal Gibbs measures (also called pure states) for the nearest-neighbour ferromagnetic Ising model on an infinite tree (or more general hyperbolic graphs) with zero external field. 
One important question is to know whether the free measure~$\P_{\gb}^f$ (\textit{i.e.}\ with free boundary condition) is extremal. 
If this is not the case, then one can naturally ask whether~$\P_{\gb}^f=\frac12 (\P_{\gb}^++\P_{\gb}^-)$ with $\P_{\gb}^+,\P_{\gb}^-$ the Gibbs measures with `$+$' and `$-$' boundary conditions respectively, or whether~$\P_{\gb}^f$ has a more complex extremal decomposition.
In the present paper, we focus on a non-negative (boundary) external field, which breaks the $+/-$ symmetry of the model.

Remind that on regular trees, in contrast to what happens on $\mathbb{Z}^d$---where the only invariant extremal Gibbs states are $\P_{\gb}^+,\P_{\gb}^-$ (see~\cite{Bodineau2006})---the Ising model exhibits the following key features:
(i) The free measure~$\P_{\gb}^f$ is extremal in a non-trivial subcritical temperature regime, see~\cite{BRZ95,Ioffe96};
(ii) At sufficiently low temperature, there are uncountably many extremal Gibbs states, see~\cite{Higuchi1977} and~\cite{BlekGani1991,GandRuizShlos2012} for rigorous results, see also~\cite{CoqKulLeNy2023} for other finite-spin models.
Regarding the extremal decomposition of the free measure~$\P_{\gb}^f$, recent results~\cite{CKLN23,GMR20} show that, at sufficiently low temperature, the extremal decomposition is supported on uncountably many extremal states.
In contrast, \cite{Wu2000} provides examples of hyperbolic graphs where, at sufficiently low temperatures, $\P_{\gb}^f$ is given by $\frac12 (\P_{\gb}^+ + \P_{\gb}^-)$.

It is then reasonable to ask whether the results on regular trees remain valid on random trees, or on tree-like graphs.
An example of such study is done in~\cite{MontanariMosselSly}, where the authors consider the free Ising measure on a growing sequence of graphs $(G_n)_{n\geq 1}$ that locally converge to a regular tree: their main result is that the Ising measure locally weakly converges to the mixture $\frac12 (\P_{\gb}^+ + \P_{\gb}^-)$, which contrasts with the behaviour that has been described in the previous paragraph.
Our present work raises the natural question to determine whether, and to what extent, this result continues to hold when a \emph{signed} sparse (boundary) random external field is considered, \textit{i.e.}\ if $(h_v)_{v\in \bT}$ (or $(h_v)_{v\in \partial\bT}$) are i.i.d.\ with law $\PP(h_v = \pm 1) =\frac12 p_n$ and $\PP(h_v = 0) =1-p_n$, whit $\lim_{n\to\infty} p_n= 0$.
This is related to the question of the effect of random boundary conditions on the Ising model, which is relevant to the theory of spin glasses. 
In fact, on trees, the Edwards Anderson model is related to an Ising model with random (signed) boundary condition (see~\cite{CCST86}).
In particular, an interesting question is the effect of a random boundary conditions on the coexistence of pure states, in the spirit of \cite{vanEnterNetocnySchaap2006}; see also~\cite{EndoVanEnterLeNy2021} for a recent overview.
Notably, the results of~\cite{CKLN23} on the extremal decomposition of $\P_{\gb}^f$ over uncountably many pure states are stable under the addition of a small external field, whether random or not.
We believe that such questions are natural and promising directions of research, in continuity of the present paper.

\section{A few preliminaries}
\label{sec:prelim}

\subsection{Some notation for trees}

Let $\mathbf{t} = (V(\mathbf{t}), E(\mathbf{t}), \rho)$ be a tree\footnote{Recall that a tree is a connected, acyclic and undirected graph, or equivalently, a graph where every two vertices are connected by exactly one path.} with root $\rho$;
with an abuse of notation, we also write $v\in \bt$ as a shorthand for $v\in V(\bt)$.
The distance between two vertices in the tree is the number of edges of the unique path connecting them.
Given two vertices $v,w \in \mathbf{t}$, we say that $w$ is a \emph{descendant} of $v$, and we denote $v \leqslant w$, if the vertex $v$ is on the shortest path from the root $\rho$ to the vertex $w$.
For a vertex $v \in \mathbf{t}$, we let $\vert v \vert$ denote the distance from $v$ to the root $\rho$; note that if we have $v\leq w$, we have $\vert v \vert \leqslant \vert  w\vert$.

We consider a tree of depth $n$, that is such that $\max_{v\in \bt}|v| = n$.

\begin{itemize} 
\item We denote $t_k = \{ v \in \mathbf{t} \;:\;\vert v \vert = k\}$ the $k$-th generation of the tree $\mathbf{t}$, for $k \leqslant n$; in general, we use bold font to denote whole trees and standard font to denote generations.
\item Given two vertices $v, w \in \mathbf{t}$ we say $w$ is a (direct) successor of $v$, and we note $v \rightarrow w$, if $v < w$ and $\vert  v \vert = \vert w \vert - 1$.  
\item For a vertex $v \in \mathbf{t}$, we denote $S(v) = \{ w \in \mathbf{t} \;:\; v \rightarrow w \}$ the set of (direct) successors of $v$, and $d(v) = \vert S(v)\vert$ its cardinal, \textit{i.e.}\ the number of descendants of $v$. 

\item We say that a vertex $v \in \mathbf{t}$ is a leaf if it has no successor, namely if $d(v)=0$, and we denote $\partial \bt = \{ v\in \bt, d(v)=0\}$ the set of leaves of the tree. 
\item For a vertex $v \in \bt$, we let $\bt(v)$ be the subtree of $\bt$ consisting of $v$ (as a root) and all vertices $w$ such that $v \leqslant w$; if $v\in t_k$ is in generation $k$, we may also use the notation $\bt_k(v)$ to make the dependence on $k$ explicit.
\end{itemize}

\noindent
In the following, thanks to Assumption~\ref{hyp:branching}, we only consider trees that have no leaves except at generation $n$ ($\partial \bt =t_n$), \textit{i.e.} such that all vertices $|v|<n$ have at least one successor, $d(v) \geqslant 1$.

\subsection{Log-likelihood ratio and Lyons iteration}
\label{sec:Lyons}

One fundamental element of the proof of Theorem~\ref{thm:Lyons} is the fact that the log-likelihood ratio of the root magnetization can be expressed recursively on the tree.
In this section, we introduce a few notation and state this recursive formula, that we call Lyons iteration in reference to~\cite[Eq.~(2.1)]{Lyons89}; for the sake of completeness and because we adapt the iteration in the following, we recall how to obtain it.

\paragraph*{Lyons iteration with plus boundary condition.}
Let us start by giving the definition of the log-likelihood ratio $r_{\bt, \gb}^{+}(\rho)$  of the root on a tree $\bt$ of depth $n$, with (classical) plus boundary conditions at temperature $\beta$:
\begin{equation*}
r_{n, \gb}^{+}(\rho) := \log \bigg( \dfrac{\P_{n,\gb}^{+}(\sigma_{\rho}=+1)}{\P_{n,\gb}^{+}(\sigma_{\rho}=-1)} \bigg)
\end{equation*}

We also introduce the log-likelihood ratio $r_{n, \gb}^{+}(u)$ of a vertex $u \in \bt$.
We define the partition function of the Ising model on the sub-tree $\bt(u)$ with plus boundary conditions at temperature $\beta$, conditioned on the vertex $u$ (the root of $\bt(u)$) having spin $a \in \{-1,1\}$: 
\begin{equation}
\label{eq:PartitionFctVertex}
Z_{\beta}^{+,a}(u) =  \sumtwo{\sigma \in \{-1,1 \}^{\vert \bt(u) \vert} }{\sigma_{u}=a} \exp\bigg( \gb \Big(\sumtwo{v,w \in \bt(u)}{v\sim w} \sigma_v \sigma_w  + a \sum_{v \in \partial\bt(u) } \sigma_v \Big) \bigg) \,.
\end{equation} 
Then, we let 
\begin{equation}
\label{def:loglike}
r_{n,\gb}^{+}(u) = \log \bigg(\frac{Z_{\beta}^{+,+1}(u)}{Z_{\beta}^{+,-1}(u)} \bigg) \,,
\end{equation}
which corresponds to the log-likelihood ratio of the root of the Ising model in the sub-tree $\bt(u)$, of depth $n-|u|$,  with plus boundary condition (we keep the subscript $n$ to remember that $u\in \bt$ with $\bt$ of depth $n$).
Let us notice that, for any vertex $u \in \bt$ different from the root, $r_{n, \gb}^{+}(u)$ does not correspond to $\log \big( \P_{n,\gb}^{+}(\sigma_{u}=+1) / \P_{n,\gb}^{+}(\sigma_{u}=-1) \big)$.

After a straightforward computation, for $u\in \bt \setminus \partial \bt$, we can write $(Z_{n, \beta}^{+,a}(u))_{a \in \{-1,+1\}}$ in terms of the partition functions $(Z_{n, \beta}^{+,a}(v))$ with $v$ successors of $u$: we have
\begin{equation}
\label{eq:PartitionFctComputation}
Z_{\beta}^{+,a}(u)
 = \prod_{v \colon u \rightarrow v} \Big( e^{+\beta a} Z_{\beta}^{+, +1}(v) + e^{-\beta a} Z_{\beta}^{+, -1}(v) \Big) \,.
\end{equation}
We can therefore express the log-likelihood ratio $r_{n,\gb}^{+}(u)$ in terms of partition functions:
\begin{equation}
\label{eq:loglikelihood}
r_{n, \gb}^{+}(u)  = 
\log \bigg(\frac{Z_{\beta}^{+,+1}(u)}{Z_{\beta}^{+,-1}(u)} \bigg) = \sum_{v \colon u \to v } \log \bigg(  
\dfrac{e^{+\beta} Z_{n,\beta}^{+, +1}(v) + e^{-\beta} Z_{\beta}^{+, -1}(v)}{e^{-\beta} Z_{\beta}^{+, +1}(v) + e^{+\beta} Z_{\beta}^{+, -1}(v)}
\bigg) \,.
\end{equation}

\noindent
Defining $g_{\gb} (x) := \log \Big(\frac{e^{2\gb}e^{x} +1}{e^{2\gb} + e^x} \Big)$, we therefore end up with the following crucial recursion:
\begin{equation}
\label{eq:Lyonsiteration}
r_{n, \gb}^{+}(u) =
\begin{cases}
\displaystyle  \sum_{v \colon u\rightarrow v} g_{\gb}(r_{n, \gb}^{+}(v)) & \qquad  \text{if } u \notin \partial\bt \,, \\
 +\infty  & \qquad \text{if } u \in \partial\bt \,.
\end{cases}
\end{equation}

\paragraph*{Lyons iteration with an external field.}
We can also define, analogously to~\eqref{def:loglike}, the log-likelihood ratio of a vertex $u\in \bt$ for the Ising model with external field $h=(h_v)_{v\in \bt}$. 

For $u\in \bt$, let $Z_{\beta}^{h,a} (u)$ denote the partition function of the Ising model on $\bt(u)$ at temperature $\beta$ with external field $(h_v)_{v\in \bt(u)}$, conditioned on the root having spin $a \in \{-1,+1\}$. In the same way as in \eqref{eq:PartitionFctComputation}, we have for $u\in \bt \setminus \partial \bt$
\begin{equation*}
Z_{\beta}^{h,a}(u) 
= e^{\gb h_u a}\prod_{v \colon u\rightarrow v} \Big( e^{+\beta a} Z_{\beta}^{h, +1}(v) + e^{-\beta a} Z_{\beta}^{h, -1}(v) \Big) \,.
\end{equation*}
Then, as in~\eqref{eq:loglikelihood}, the log-likelihood ratio  $r_{n, \beta}^{h}(u)$ can be expressed in terms of the conditional partition functions, and we end up with the following recursion, analogous to~\eqref{eq:Lyonsiteration}:
\begin{equation}
\label{eq:recLyons}
r_{n, \beta}^{h}(u) = 
\begin{cases}
\displaystyle
2 \beta h_{u} + \sum_{v \colon u \to v} g_{\beta} (r_{n, \beta}^{h} (v)) \quad & \text{if } u \notin \partial\bt \,, \\
\displaystyle
2 \beta h_{u} & \text{if } u \in \partial\bt \,.
\end{cases}
\end{equation}
Here, we have also used that if $u\in \partial \bt$ is a leaf, the log-likelihood ratio $r_{n,\gb}^h(u)$ is easily seen to be~$2\beta h_u$.

\begin{remark}
Note that the Ising model on $\bt$ with a plus boundary external field $h_+=(h_v)_{v\in \partial \bt}$, \textit{i.e.}\ $h_v=+1$ for all $v\in \partial \bt$, corresponds to the Ising model with plus boundary condition on some extended tree, obtained by adding an extra generation to $\bt$ with exactly one descendant to all $v\in \partial \bt$, recall Figure~\ref{fig:graphboundary}.
In this setting, the log-likelihood ratios $r_{n,\gb}^{h_+} (u)$ verify the same recursion as in~\eqref{eq:Lyonsiteration} but with a different initial condition on the leaves:
\begin{equation}
\label{eq:initialcond}
r_{n, \gb}^{h_+}(v) = 
2\gb \quad  \text{if } v \in \partial\bt \,.
\end{equation}
\end{remark}

\subsection{The case of a non-vanishing sequence $(p_n)_{n\geq 0}$}

Before we move to the proofs, let us evacuate the case of a non vanishing sequence $(p_n)_{n\geq 0}$ in the different models of Section~\ref{sec:results}.

\begin{lemma}
\label{lem:pnconstant}
Let $\bT$ be a branching process satisfying Assumption~\ref{hyp:branching} and consider the Ising model with sparse Bernoulli external field inside the tree, \textit{i.e.}\ model \textbf{(b)} of Section~\ref{sec:results}, with parameters $(p_n)_{n\geq 0}$ in $[0,1]$.
If $\liminf_{n\to\infty} p_n >0$, then the root is asymptotically magnetized.
\end{lemma}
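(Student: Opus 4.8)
The plan is to reduce the non-vanishing case to Lyons' Theorem~\ref{thm:Lyons} via a GKS-type comparison. Since $\liminf_{n\to\infty} p_n > 0$, we may fix $p_* > 0$ with $p_n \geq p_*$ for all large $n$. The Bernoulli external field with parameter $p_n$ stochastically dominates one with parameter $p_*$, and by the FKG/GKS inequalities increasing the field can only increase the root magnetization; hence it suffices to treat a \emph{constant} field parameter $p_* \in (0,1]$. So the real content is: for fixed $p \in (0,1]$, the Ising model on $\bT_n$ with i.i.d.\ Bernoulli$(p)$ external field has a root magnetization bounded away from $0$ with probability tending to $1$, for \emph{every} $\beta > 0$ (note there is no condition $\nu\tanh\beta > 1$ here — a positive-density field always magnetizes the root, even at high temperature).

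The key step is a second-moment / branching argument showing that, with high probability, the root has a descendant carrying $h_v = +1$ at bounded depth. More precisely, first I would note that by GKS we only lose a constant factor if we restrict the external field to a single vertex: if $w$ is any vertex with $h_w = +1$, then $\mathfrak{m}_{n,\beta}^{(p)} \geq \mathfrak{m}_{n,\beta}^{h_w}$ where $h_w$ is the field equal to $+1$ at $w$ and $0$ elsewhere. By Lyons' iteration~\eqref{eq:recLyons} (or a direct monotonicity argument), a single $+1$ field at a vertex $w$ at distance $|w| = k$ from the root produces a root log-likelihood ratio at least $g_\beta^{(k)}(2\beta)$ applied through the path $\rho \to \dots \to w$ — more carefully, using that $g_\beta$ is increasing and that contributions from other subtrees are nonnegative, one gets $r_{n,\beta}^{(p)}(\rho) \geq g_\beta^{\circ k}(2\beta) =: c_k(\beta) > 0$, a strictly positive constant depending only on $\beta$ and $k$. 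Thus it is enough to show that, with probability $\to 1$, there exists a vertex $w$ with $h_w = +1$ and $|w| \leq K$ for some fixed $K = K(p)$.

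For that last point I would use the fact that $\bT_n$ (conditioned on reaching depth $n$, which is automatic under Assumption~\ref{hyp:branching}) has at least $2^{|v|}$... no — rather, since $\mu(0) = 0$ and $\mu(1) < 1$, the number of vertices at generation $k$ is at least a geometric-type growing quantity; in fact $|t_k| \geq$ (number of vertices) grows, and more simply $\PP(\text{no } h_v = +1 \text{ among the first } K \text{ generations} \mid \bT_n) = (1-p)^{|t_0| + \dots + |t_K|} \leq (1-p)^{K+1}$ deterministically, since each generation is non-empty. So $\bP\otimes\PP(\exists\, w, |w|\leq K, h_w = +1) \geq 1 - (1-p)^{K+1}$, which can be made $\geq 1-\epsilon$ by choosing $K$ large. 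Combining, $\mathfrak{m}_{n,\beta}^{(p)} \geq \tanh(c_K(\beta)/2) > 0$ on this high-probability event, uniformly in $n$, which gives~\eqref{eq:rootmagnetization}.

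The main obstacle — really the only delicate point — is making the monotonicity step rigorous: justifying that $r_{n,\beta}^{(p)}(\rho) \geq g_\beta^{\circ k}(2\beta)$ when there is a $+1$ field somewhere at depth $k$. This needs that $g_\beta \geq 0$ on $[0,\infty)$ and that $g_\beta$ is nondecreasing, together with the iteration~\eqref{eq:recLyons}: one propagates the bound $r(w) \geq 2\beta > 0$ up the path to the root, at each step using $r(u) = 2\beta h_u + \sum_{v: u\to v} g_\beta(r(v)) \geq g_\beta(r(w'))$ where $w'$ is the child of $u$ on the path to $w$, discarding the other (nonnegative) terms and the nonnegative $2\beta h_u$. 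A clean alternative avoiding any sign discussion of $g_\beta$ is to invoke GKS directly: $\mathfrak{m}_{n,\beta}^{(p)}(\rho) \geq \mathfrak{m}_{n,\beta}^{h_w}(\rho)$, and then on the finite subgraph given by the path $\rho,\dots,w$ together with a pendant $+1$, $\mathfrak{m}_{n,\beta}^{h_w}(\rho)$ is bounded below by the magnetization of the root of a path of length $k+1$ with one endpoint fixed to $+1$ (again by GKS, since removing edges/vertices of the tree outside the path only decreases correlations with the boundary spin), and the latter is an explicit positive constant $\tanh^{k+1}$-type quantity depending only on $\beta$ and $k$. Either route closes the argument.
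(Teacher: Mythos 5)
Your proof is correct, but it follows a genuinely different route from the paper's. The paper's argument is a one-liner: it restricts to the event $\{h_\rho=+1\}$, which has probability $p_n$, notes that all log-likelihood ratios are nonnegative so that on this event $r_{n,\beta}^{(p_n)}(\rho)\geq 2\beta$ by the recursion~\eqref{eq:recLyons}, and concludes that $\liminf_{n\to\infty}\bP\otimes\PP(r_{n,\beta}^{(p_n)}(\rho)>\varepsilon)\geq\liminf_{n\to\infty}p_n>0$ for $\varepsilon<2\beta$. You instead look for a vertex with $h_w=+1$ anywhere in a ball of fixed radius $K$ around the root (which fails with probability at most $(1-p_*)^{K+1}$ since $\mu(0)=0$ forces every generation to be nonempty), and propagate the bound $r(w)\geq 2\beta$ up to the root through $g_\beta^{\circ k}$, using that $g_\beta$ is nondecreasing and nonnegative on $[0,\infty)$ and that all other contributions in~\eqref{eq:recLyons} are nonnegative; the GKS reduction to a constant parameter $p_*$ and the single-vertex-field comparison are both sound, as is the alternative comparison with a pendant path. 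What your longer route buys is the full strength of the definition~\eqref{eq:rootmagnetization}: by letting $K\to\infty$ you get root magnetization bounded below by a positive constant (depending on $\beta,K$ only) with probability tending to $1$, uniformly in $n$ and for every $\beta>0$, whereas the paper's event $\{h_\rho=+1\}$ only yields a probability bounded below by $\liminf_n p_n$, which matches~\eqref{eq:rootmagnetization} only after an argument like yours (or an appeal to the later capacity estimates). The paper's proof buys brevity; yours buys the quantitative ``probability $\to 1$'' conclusion directly, with only the Lyons recursion and GKS as inputs.
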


\begin{proof}
To prove this, it is enough to consider the event where $h_\rho=+1$.
On the event that $h_{\rho}=1$ and since we have $r_n^{(p_n)}(v)\geq 0$ for all $v\leftarrow \rho$, we get that $r_{n}(\rho) \geq 2 \gb $.
Hence, for $\gep< 2\gb$ we have $\bP\otimes \PP(r_{n}(\rho) >\gep) \geq \PP(h_{\rho}=1) =p_n$.
For such $\gep$, we get that
\[
\liminf_{n\to\infty} \bP\otimes \PP(r_{n}(\rho) >\gep) >0\,,
\]
which concludes the proof.
\end{proof}

In the case of the Ising model with non-vanishing Bernoulli boundary external field, \textit{i.e.}\ model \textbf{(c)} of Section~\ref{sec:results} with $\liminf_{n\to\infty} p_n >0$, one obtains a similar result as with plus boundary conditions, that is Theorem~\ref{thm:Lyons}:
the root is asymptotically magnetized if and only if $\nu\tanh(\gb)>1$.
This is actually a corollary of Theorem~\ref{thm:PemPer} (from~\cite{PemPer10}) and our bounds on the $3$-capacity of the pruned tree, see Proposition~\ref{prop:capacitypruned} which is also valid for a non-vanishing sequence $(p_n)_{n\geq 0}$.

\section{Upper bound: Lyons argument}
\label{sec:upper}

In the section, we consider the Ising model with sparse external field inside the tree, \textit{i.e.}\ model \textbf{(b)} of Section~\ref{sec:results}.
We let $(p_n)_{n\geq 0}$ be a sequence in $[0,1]$ and we give an upper bound on the log-likelihood ratio.

We show here the following proposition, using Lyons' iteration~\eqref{eq:recLyons}, which gives a sufficient condition for having no root magnetization.

\begin{proposition}
\label{prop:upperLyons}
Suppose that Assumption~\ref{hyp:branching} holds.
Consider the Ising model  on $\bT$ with sparse Bernoulli external field, with $\lim_{n\to\infty} p_n=0$.

Then if $\lim_{n\to\infty} (\nu \tanh(\gb))^n p_n =0$, there is no asymptotic root magnetization.
More precisely, there is a constant $C=C_{\gb,\nu}$ such that
\[
\bE\otimes \EE \big[ r_{n, \beta}^{p_n}(\rho) \big]  
\leq C
\begin{cases}
p_n & \quad \text{ if } \tanh(\gb)\nu <1 \,,\\
\sqrt{p_n} & \quad \text{ if } \tanh(\gb)\nu = 1\,,\\
p_n (\nu \tanh(\gb))^n & \quad \text{ if } \tanh(\gb)\nu > 1 \,.
\end{cases}
\]
\end{proposition}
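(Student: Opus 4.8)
The strategy is to iterate Lyons' recursion~\eqref{eq:recLyons} for the log-likelihood ratios and take expectations, exploiting the concavity and sublinearity of $g_\beta$ together with the fact that the external field is sparse (so the leading contribution to $r_{n,\beta}^{p_n}(\rho)$ comes from a single ``active'' vertex up the tree, weighted by its depth-dependent attenuation $\tanh(\beta)^{\text{dist}}$). First I would record the two elementary analytic facts about $g_\beta(x) = \log\big(\frac{e^{2\beta}e^x+1}{e^{2\beta}+e^x}\big)$: namely $g_\beta(0)=0$, $g_\beta'(0)=\tanh(\beta)$, $g_\beta$ is concave and nondecreasing on $[0,\infty)$, hence $0\le g_\beta(x)\le \tanh(\beta)\, x$ for all $x\ge 0$. (These are standard and used already in~\cite{Lyons89}.)

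\textbf{Key steps.} Fix $u\in\bT$ at generation $k=|u|$ and write $\phi_k := \bE\otimes\EE\big[ r_{n,\beta}^{p_n}(u) \big]$, which by the branching property depends only on $k$ (and on $n$). From~\eqref{eq:recLyons}, on the event $u\notin\partial\bT$,
\[
r_{n,\beta}^{p_n}(u) = 2\beta h_u + \sum_{v\colon u\to v} g_\beta\big(r_{n,\beta}^{p_n}(v)\big) \le 2\beta h_u + \tanh(\beta)\sum_{v\colon u\to v} r_{n,\beta}^{p_n}(v)\,.
\]
Taking expectations, using $\EE[h_u]=p_n$, the independence of the offspring number $d(u)$ from the subtrees hanging below, Wald's identity ($\bE[d(u)]=\nu$), and the leaf initial condition $r_{n,\beta}^{p_n}(v)=2\beta h_v$ with $\bE\otimes\EE[\,\cdot\,]=2\beta p_n$, one gets the linear recursion
\[
\phi_k \le 2\beta p_n + \nu \tanh(\beta)\, \phi_{k+1}\,, \qquad 0\le k\le n-1\,, \qquad \phi_n = 2\beta p_n\,.
\]
Setting $\lambda := \nu\tanh(\beta)$ and unrolling this recursion from $k=n$ down to $k=0$ gives
\[
\phi_0 = \bE\otimes\EE\big[r_{n,\beta}^{p_n}(\rho)\big] \le 2\beta p_n \sum_{j=0}^{n} \lambda^{j} = 2\beta p_n \cdot \frac{\lambda^{n+1}-1}{\lambda-1}
\]
when $\lambda\neq 1$, and $\phi_0\le 2\beta p_n (n+1)$ when $\lambda=1$. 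This already yields the three cases of the bound up to the constant, \emph{except} the claimed $\sqrt{p_n}$ at criticality, where the naive bound only gives $p_n \cdot n$. To upgrade the critical case, I would optimize over the generation at which one switches from using the crude bound $g_\beta(x)\le\tanh(\beta)x$ to simply discarding the contribution: more precisely, truncate the tree at level $m = m(n)$, bound the contribution of generations $\ge m$ trivially (each active vertex below level $m$ contributes, after attenuation, at most a bounded amount times $\tanh(\beta)^{m}$ times the expected number $\nu^{\,|v|}$ of vertices, giving $\lesssim p_n \lambda^{m} \cdot \sum_{k\ge m}1$, which at $\lambda=1$ is $\lesssim p_n(n-m)$), and bound the top $m$ generations by the linear recursion giving $\lesssim p_n\, m$; but this still gives $p_n n$. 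The actual improvement at criticality must instead come from the second-order behavior: since $g_\beta$ is \emph{strictly} concave, $g_\beta(x)\le \tanh(\beta)x - c x^2$ near $0$ for some $c>0$, and combined with a second-moment / Jensen-type control on the $r$'s one obtains the $\sqrt{p_n}$ rate. Concretely, I would introduce $\psi_k := \bE\otimes\EE[(r_{n,\beta}^{p_n}(u))^2]$ or work with a suitable Lyapunov function and show that the quadratic correction, summed over the $n$ generations, produces a factor $\sqrt{p_n}$ rather than $p_n$, because the $r$-values are themselves of order $p_n\lambda^{n-k}$ which is $p_n$ at the root but $O(1)$ for $k$ near $n$, so the quadratic term $\sum_k \phi_k^2 \asymp p_n^2\sum_k 1 \cdot(\cdots)$ — actually the cleanest route at $\lambda = 1$ is to note $\phi_k \lesssim \min(1, p_n(n-k))$ by a separate a priori bound (the log-likelihood ratio of any vertex is bounded by the ratio of a tree with full plus boundary, which at criticality has root magnetization $0$ but the relevant truncated quantity is $O(1)$), split the sum at $k_0 = n - 1/p_n$, getting $\sum_{k<k_0} \phi_k \lesssim \sum p_n(n-k)$ restricted suitably plus $\sum_{k\ge k_0} 1 \lesssim 1/p_n \cdot p_n \cdot$(attenuation)$\ldots$ — and optimizing yields $\sqrt{p_n}$.

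\textbf{Main obstacle.} The routine part is the subcritical and supercritical cases, which follow immediately from the linear recursion above. The delicate point is the critical case $\nu\tanh(\beta)=1$, where squeezing the bound from the trivial $p_n n$ down to $\sqrt{p_n}$ requires genuinely using the strict concavity of $g_\beta$ (its second derivative at $0$), or equivalently a two-scale truncation argument combined with an a priori $O(1)$ bound on truncated log-likelihood ratios; getting the constant $C_{\beta,\nu}$ uniform while doing this is where I expect to spend the most care. One should also double-check that $g_\beta(x)\le\tanh(\beta)x$ genuinely holds for \emph{all} $x\ge 0$ (it does, by concavity and $g_\beta(0)=0$), since the $2\beta h_u$ terms can make intermediate $r$-values as large as $2\beta n$ before attenuation, so we cannot restrict to a neighborhood of $0$ for the linear bound — only the quadratic refinement at criticality is local, and there one must first establish that the relevant $r$-values contributing to the correction are indeed small.
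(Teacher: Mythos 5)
Your treatment of the subcritical and supercritical cases is exactly the paper's argument: the linear bound $g_{\gb}(x)\leq \tanh(\gb)x$, the recursion $\phi_k \leq 2\gb p_n + \nu\tanh(\gb)\phi_{k+1}$ with $\phi_n = 2\gb p_n$, and the unrolled geometric sum. The gap is the critical case $\nu\tanh(\gb)=1$, which you explicitly leave unresolved, and the routes you sketch there do not close it. Your truncation idea (split at $k_0 = n-1/p_n$, use an a priori bound $\phi_m = O(1)$ by comparison with a stronger field) fails because at criticality there is no attenuation factor left in the unrolled recursion: stopping at level $m$ gives $\phi_0 \leq 2\gb p_n m + \phi_m$, and the comparison bound on $\phi_m$ (e.g.\ via an all-plus external field) is of order $1$ and does not decay, so this can never produce a vanishing bound, let alone $\sqrt{p_n}$. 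Your other suggestion (second moments $\psi_k$, or a Lyapunov function, to exploit strict concavity) points in the right direction but is not carried out, and it is more machinery than is needed.

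The missing idea is to apply Jensen's inequality \emph{before} linearizing: since $g_{\gb}$ is concave and the offspring number is independent of the subtrees, the means $y_k := \bE\otimes\EE[r_{n,\gb}^{(p_n)}(u)]$, $|u|=k$, satisfy the one-dimensional nonlinear recursion $y_k \leq f_{n,\gb}(y_{k+1})$ with $f_{n,\gb}(x) = 2\gb p_n + \nu g_{\gb}(x)$ and $y_n = 2\gb p_n$. Because $f_{n,\gb}$ is increasing and concave with a unique fixed point $x_n$, the iteration is trapped: either $y_n \leq x_n$ and then $y_k \leq x_n$ for all $k$, or $y_n > x_n$ and then $y_k \leq y_n = 2\gb p_n$; hence $y_0 \leq \max\{2\gb p_n, x_n\}$ with no sum over generations at all. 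The strict concavity you invoke is then used only on the fixed point: one first checks $x_n < 1$ for $n$ large (since $\nu g_{\gb}(x) \leq x - c$ for $x\geq 1$), and then $\nu g_{\gb}(x) \leq x - c_{\gb}x^2$ on $(0,1)$ gives $c_{\gb}x_n^2 \leq 2\gb p_n$, i.e.\ $x_n \leq c'_{\gb}\sqrt{p_n}$. This avoids any second-moment control of the $r$-values and yields the uniform constant $C_{\gb,\nu}$ directly; without this (or an equivalent substitute), your proposal only proves the critical bound $O(np_n)$, which does not even give absence of magnetization unless $p_n = o(1/n)$.
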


\begin{proof}
This is a simple application of the recursion formula~\eqref{eq:recLyons}.
Let us first give a simple bound, valid for a generic tree $\bt$ and external field $(h_v)_{v\in \bt}$.

Using the easy bound $g_{\beta}(x) \leqslant \tanh(\beta) x$ for all $x \geqslant 1$ (note that $\tanh(\gb)$ is the derivative of $g_{\gb}$ at $0$), we obtain from~\eqref{eq:recLyons} that for any $u\in \bt \setminus \partial \bt$,
\[
r_{n, \beta}^{h}(u) \leqslant  2\gb h_u +\tanh(\beta) \sum_{v \colon u \to v} r_{n,\gb}^h(v) \,.
\]
Applying this inequality recursively we obtain the following upper bound for the log-likelihood ratio of the root~$\rho$:
\begin{equation*}
r_{n, \beta}^{h}(\rho) \leqslant \sum_{k=0}^{n} \tanh(\beta)^k \sum_{u \in t_k} 2 \beta h_u \,.
\end{equation*}

In our specific setting where $(h_v)_{v\in \bT}$ is an i.i.d.\ Bernoulli field with parameter $p_n$, we need to show that the upper bound goes to $0$ in $\PP\otimes\bP$ probability.
We simply take its expectation with respect to $\bP\otimes \mathbb{P}$: we obtain
\begin{equation}
\label{eq:uppermean}
\bE\otimes \EE \big[ r_{n, \beta}^{(p_n)}(\rho) \big] \leq  
2 \beta p_n \sum_{k=0}^{n} (\tanh(\beta) \nu )^k  \,.
\end{equation}

\smallskip
\textbullet\ In the case $\tanh(\beta) \nu <1$, the  last sum is bounded by a constant and we have that $\bE\otimes \EE \big[ r_{n, \beta}^{p_n}(\rho) \big] \leq c p_n$, which goes to $0$.
 
 \smallskip
\textbullet\ In the case $\tanh(\beta) \nu >1$, we have 
\begin{equation*}
\bE\otimes \EE \big[ r_{n, \beta}^{(p_n)}(\rho) \big] \leqslant \frac{2 \beta}{\tanh(\gb)-1} p_n   ((\tanh(\gb)\nu)^n-1) \leq c_{\gb,\nu} p_n   (\tanh(\gb)\nu)^n \,,
\end{equation*}
which goes to $0$ under the assumptions of the proposition.

\smallskip
\textbullet\ The case $\tanh(\gb)\nu=1$ is a bit more subtle: taking the expectation in~\eqref{eq:uppermean} gives the upper bound $\bE\otimes \EE [ r_{n, \beta}^{(p_n)}(\rho) ] \leq 2\gb n p_n$, which goes to $0$ only if $p_n=o(\frac1n)$.
In the general case, we need some extra work.
Going back to Lyons' iteration~\eqref{eq:recLyons} and using that $x\mapsto g_{\gb}(x)$ is concave, we get by Jensen's inequality (recalling that $\nu$ is the mean offspring number) that
\[
\bE\otimes \EE \big[ r_{n, \beta}^{(p_n)}(u) \big] \leq 2 \gb p_n + \nu g_{\gb} \Big(\bE\otimes \EE \big[ r_{n, \beta}^{(p_n)}(v) \big]  \Big)  \qquad \text{ for } u\notin \partial \bT \,,
\]
where $u,v$ are generic vertices at successive generations; note that the expectation $\bE\otimes \EE \big[ r_{n, \beta}^{(p_n)}(u) \big]$ depends only on (the distribution of) the subtree $\bt(u)$ and $(h_v)_{v\in \bt(u)}$, so in fact only on $|u|$.
Hence, setting $y_k := \bE\otimes \EE [ r_{n, \beta}^{(p_n)}(u) ]$ with $|u|=k$,  we end up with the following iteration
\begin{equation}
\label{eq:recLyons2}
y_k \leq  
\begin{cases}
f_{n,\gb}(y_{k+1})  \quad & \text{if }  0\leq k<n\,, \\
2 \beta p_n & \text{if } k=n\,.
\end{cases}
\end{equation}
with $f_{n,\gb}(x) = 2\gb p_n + \nu g_{\gb}(x)$, which is a concave increasing function.
Let us denote $x_n$ the unique solution of $f_{n,\gb}(x_n)=x_n$. 
Then, it is clear that either the initial condition verifies $y_n \leq x_n$
and then $y_k\leq x_n$ for all $k\leq n$, or it verifies $y_n>x_n$ and then $y_k\leq y_n =2\gb p_n$ for all $k\leq n$. 
We have therefore proven that 
\[
y_0 = \bE\otimes \EE [ r_{n, \beta}^{p_n}(\rho) ] \leq \max \{2\gb p_n,x_n \} \,,
\]
and it remains to show that the fixed point $x_n$ goes to $0$ as $n\to\infty$.
But this should be clear, since $x\mapsto \nu g_{\gb}(x)$ is a strictly concave function with slope $\nu\tanh(\gb)=1$ at the origin.

In fact, let us prove that $x_n = O(\sqrt{p_n})$.
First, note that  $\nu g_{\gb}(x) \leq x - c$ for any $x\geq 1$ with the constant $c =1-\nu g_{\gb}(1)>0$ (by concavity), so that $f_{n,\gb}(x) \leq 2\gb p_n + x -c$ for all $x\geq 1$.
If $n$ is large enough so that $2\gb p_n <c$, this inequality cannot be verified at $x=x_n$, so we must have $x_n<1$.
Then, writing that there is some constant $c_{\gb}>0$ such that $\nu g_{\gb}(x) \leq x- c_{\gb} x^2$ for all $x\in (0,1)$, we get that $x_n = f_{n,\gb}(x_n)\leq 2\gb p_n +x_n-c_{\gb} x_n^2 $ for all $n$ large enough.
We conclude easily that $x_n \leq c'_{\gb} \sqrt{p_n}$ for $n$ large, which is what was claimed.
\end{proof}

\section{Lower bound: comparison with an Ising model on a pruned tree}
\label{sec:lower}

First of all, we use the Ising model with Bernoulli boundary external field $(h_v)_{v\in \partial \bT}$, \textit{i.e.}\ model \textbf{(c)} in Section~\ref{sec:results}, to get a lower bound for the magnetization of the Ising model with Bernoulli external field in the whole tree, since the magnetization is clearly higher in the second case.

\subsection{From a Bernoulli external field to a pruned tree}
\label{sec:comparePruned}

Let us show in this section that, conditionally on the realization $\bt$ of the Galton--Watson tree $\bT$ and on the realization $h$ of a Bernoulli external field $(h_v)_{v\in \partial\bT}$, the magnetization is equal to the magnetization on a \textit{pruned} (sub)-tree $\bt^*$ with plus boundary external field $h_+ = (h_v)_{v\in \partial \bt^*}$, \textit{i.e.}\ with $h_v=+1$ for all $v\in \partial \bt^*$. 
Informally, the pruned tree $\bt^*$ is obtained by removing all branches in $\bt$ that do not lead to a leaf with $h_v=+1$; we call this procedure the \emph{pruning of dead branches}, see~Figure~\ref{fig:pruned} for an illustration.

To be more formal, given a tree $\bt$ of depth $n$ and $(h_v)_{v\in \partial \bt}$ an external field with value in $\{0,1\}$, let us define indicator variables $(Y_u)_{u\in \bt}$ as follows:
\begin{equation}
\label{def:Xv}
Y_u =
\begin{cases}
+1 & \quad \text{if there exists $v\geq u$, $v\in \partial \bt$ such that $h_v =+1$} \\
0 & \quad \text{otherwise} \,.
\end{cases}
\end{equation}
We interpret having $Y_u=+1$ as the fact that the vertex $u$ belongs to a living branch, and so has ``survived to the pruning''.
Alternatively, we can construct $(Y_u)_{u\in \bt}$ iteratively starting from the leaves:
\begin{itemize}
\item for $v\in \partial \bt =t_n$, set $X_v :=h_v$;
\item iteratively, for $u\in \bt\setminus \partial \bt$, we set $Y_u = +1$ if and only if there is some $v \leftarrow u$ with  $Y_v=+1$ and $Y_u=0$ otherwise.
\end{itemize} 
We then define the \emph{pruned tree} as
\begin{equation}
\label{defpruned}
\bt^* = \mathrm{Pruned}_{h}(\bt) := \{v \in \bt, Y_v =+1\} \,.
\end{equation}

\begin{figure}[tb]
\centering
\includegraphics[scale=0.35]{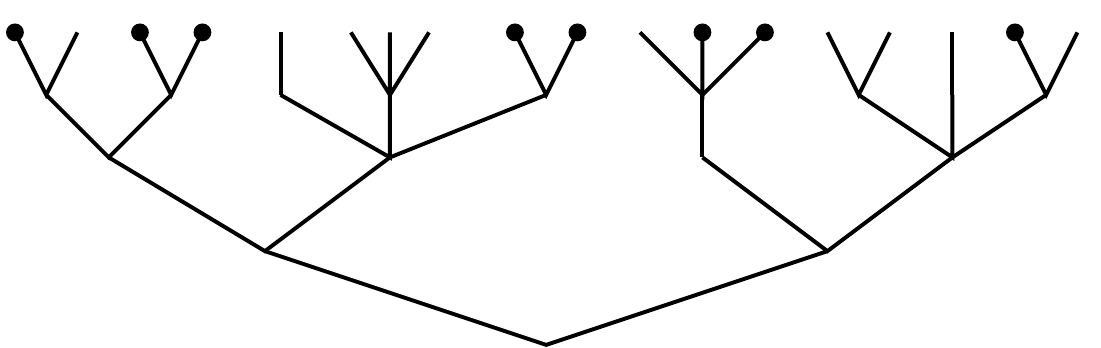}
\qquad
\includegraphics[scale=0.35]{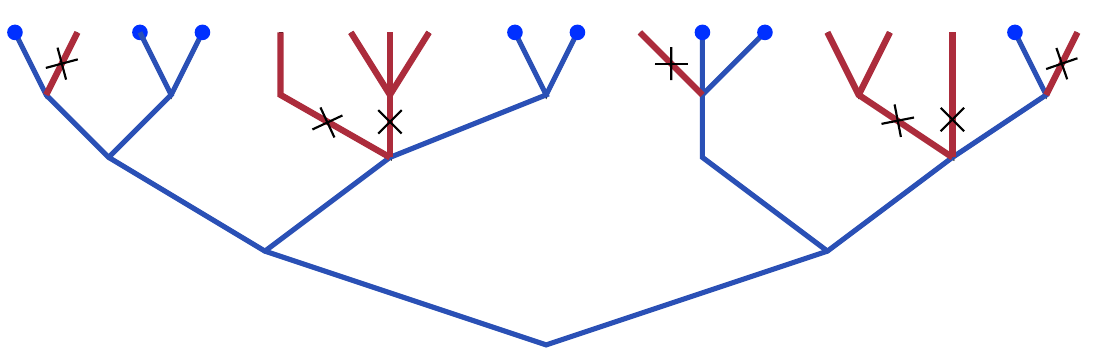}
\caption{\small Illustration of the pruning procedure. On the left, we have represented the tree $\bt$ and the leaves with $h_v=+1$ with dots.
The picture on the right represents how the pruning procedure proceeds: in blue are all the ``living branches'' leading to a leaf with $h_v=+1$; in red are all the  ``dead branches'' leading to $h_v=0$, that have to be pruned.
The pruned tree $\bt^*$ consists in keeping only the living (blue) branches.
}
\label{fig:pruned}
\end{figure}

\noindent
Let us consider two following log-likelihood ratios, at inverse temperature~$\gb$:
\begin{itemize}
\item For $u\in \bt$, $r_{\bt,\gb}^{h} (u)$ is the log-likelihood ratio for the Ising model on $\bt$ with external field $h \in \{0,1\}^{\partial \bt}$ on the boundary;

\item For $u\in \bt^*$, $r_{\bt^*,\gb}^{h_{+}} (u)$ is the log-likelihood ratio for the Ising model on $\bt^*$  with external field $h_{+} \equiv 1$ on the boundary $\partial \bt$ (note that we have $h_+ =h |_{\partial \bt^*}$).
\end{itemize}
We now show the following lemma.

\begin{lemma}
\label{lem:tt*}
We have $r_{\bt,\gb}^h(u) =0$ for any $u\in \bt\setminus \bt^*$ and $r_{\bt,\gb}^h(u) =r_{\bt^*,\gb}^{h_+}(u)$ for any $u\in\bt^*$.
In particular, the log-likelihood ratio of the root verifies $r_{\bt,\gb}^{h}(\rho) =r_{\bt^*}^{h_+}(\rho)$.
\end{lemma}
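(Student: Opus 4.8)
The plan is to prove both assertions by (reverse) induction on the generations, going from the leaves $t_n$ down to the root, exploiting the fact that the log-likelihood ratios satisfy Lyons' recursion~\eqref{eq:recLyons} with initial condition $r_{\bt,\gb}^h(v)=2\gb h_v$ on the leaves. The key structural observation is that $\bt^*$ is itself a tree of depth $n$ (or rather a subtree sharing the root, provided $\bt^*$ is nonempty; one should note that if $\bt^*=\emptyset$ the statement about the root is vacuous or trivially $r=0$, and in the application we work on the event that $h$ is not identically $0$ on $\partial\bt$). By construction of the variables $(Y_u)_{u\in\bt}$, a vertex $u$ lies in $\bt^*$ iff $Y_u=+1$, and its set of successors \emph{in} $\bt^*$ is exactly $S^*(u):=\{v\colon u\to v,\ Y_v=+1\}=\{v\colon u\to v,\ v\in\bt^*\}$; moreover the leaves of $\bt^*$ are precisely $\partial\bt^*=\{v\in\partial\bt\colon h_v=+1\}$, so $h_+:=h|_{\partial\bt^*}\equiv 1$ there, consistent with the statement.

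The induction runs as follows. First I would establish, by reverse induction on $|u|$, that $r_{\bt,\gb}^h(u)=0$ for every $u\in\bt\setminus\bt^*$. Base case: if $u\in\partial\bt\setminus\bt^*$ then $Y_u=h_u=0$, so $r_{\bt,\gb}^h(u)=2\gb h_u=0$. Inductive step: if $u\in\bt\setminus\partial\bt$ with $Y_u=0$, then by definition every successor $v\leftarrow u$ has $Y_v=0$, i.e. $v\in\bt\setminus\bt^*$, so by the induction hypothesis $r_{\bt,\gb}^h(v)=0$ for all such $v$; also $h_u=0$ since $u$ is not a leaf and the external field lives only on $\partial\bt$. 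Then~\eqref{eq:recLyons} gives $r_{\bt,\gb}^h(u)=2\gb h_u+\sum_{v\colon u\to v}g_\gb(r_{\bt,\gb}^h(v))=0+\sum_v g_\gb(0)$, and since $g_\gb(0)=\log\big(\frac{e^{2\gb}+1}{e^{2\gb}+1}\big)=0$, we get $r_{\bt,\gb}^h(u)=0$, completing this half.

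Next I would prove, again by reverse induction on $|u|$, that $r_{\bt,\gb}^h(u)=r_{\bt^*,\gb}^{h_+}(u)$ for every $u\in\bt^*$. Base case: if $u\in\partial\bt^*$, then $u\in\partial\bt$ with $h_u=+1$, so both sides equal $2\gb$. Inductive step: take $u\in\bt^*\setminus\partial\bt^*$. Split its successors in $\bt$ as $S(u)=S^*(u)\sqcup \big(S(u)\setminus S^*(u)\big)$, where $S^*(u)$ are the successors lying in $\bt^*$ and the rest lie in $\bt\setminus\bt^*$; note $S^*(u)\neq\emptyset$ precisely because $Y_u=+1$. By~\eqref{eq:recLyons} applied in $\bt$ (and $h_u=0$ as $u\notin\partial\bt$ — here one should also observe $u\notin\partial\bt$, since a leaf of $\bt$ inside $\bt^*$ is a leaf of $\bt^*$, already handled),
\[
r_{\bt,\gb}^h(u)=\sum_{v\in S^*(u)}g_\gb\big(r_{\bt,\gb}^h(v)\big)+\sum_{v\in S(u)\setminus S^*(u)}g_\gb\big(r_{\bt,\gb}^h(v)\big).
\]
By the first part the second sum vanishes (each such $r_{\bt,\gb}^h(v)=0$ and $g_\gb(0)=0$), and by the induction hypothesis $r_{\bt,\gb}^h(v)=r_{\bt^*,\gb}^{h_+}(v)$ for $v\in S^*(u)$. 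Since the successors of $u$ in $\bt^*$ are exactly $S^*(u)$, Lyons' recursion~\eqref{eq:recLyons} \emph{for the tree} $\bt^*$ with field $h_+$ gives $r_{\bt^*,\gb}^{h_+}(u)=\sum_{v\in S^*(u)}g_\gb\big(r_{\bt^*,\gb}^{h_+}(v)\big)$, which matches. Applying this at $u=\rho$ (on the event $\rho\in\bt^*$, equivalently $Y_\rho=+1$) yields $r_{\bt,\gb}^h(\rho)=r_{\bt^*,\gb}^{h_+}(\rho)$.

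I do not expect a genuine obstacle here: the proof is a clean double induction, and the only point requiring a moment's care is the bookkeeping that $\bt^*$ is a bona fide tree of depth $n$ rooted at $\rho$ whose leaves are the surviving leaves of $\bt$, so that Lyons' recursion applies to it verbatim — and the degenerate case $\bt^*=\emptyset$ (no surviving leaf), where $r_{\bt,\gb}^h(\rho)=0$ by the first part and the second claim is vacuous. The identity $g_\gb(0)=0$ is what makes dead branches invisible, and that is the crux of the argument.
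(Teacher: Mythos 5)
Your proof is correct and follows essentially the same route as the paper: both arguments rest on Lyons' recursion~\eqref{eq:recLyons} together with the identity $g_\gb(0)=0$, which makes the pruned (dead) branches invisible; the paper phrases this as the observation that $r_{\bt^*,\gb}^{h_+}$, extended by $0$ on $\bt\setminus\bt^*$, satisfies the same recursion as $r_{\bt,\gb}^{h}$, while you simply make the underlying reverse induction explicit. Your added bookkeeping (the split of $S(u)$ into surviving and pruned successors, and the degenerate case $\bt^*=\emptyset$) is consistent with the paper's argument.
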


\begin{proof}
We simply need to write the following Lyons recursions (see also Remark~\ref{rem:boundary}).
On the tree $\bt$, we have
\[
r_{\bt,\gb}^h(u)= 
\begin{cases}
\sum \limits_{v \colon u\rightarrow v} g_{\beta}\big(r_{\bt,\gb}^h(v)\big), &\qquad \text{if } u \notin \partial\textbf{t} \,, \\
2 \beta h_v, &\qquad \text{if } v \in \partial\bt \,.
\end{cases} 
\]
Since $g_{\gb}(0)=0$, this readily proves that $r_{\bt,\gb}^h(u)=0$ if all $v\in \partial \bt$ descendants of $u$ verify $h_v=0$, \textit{i.e.}\ if $Y_u=0$ and $u$ has been pruned ($u\in \bt\setminus \bt^*$).
On the tree $\bt^*$, we have
\[
r_{\bt^*,\gb}^{h_+}(u)=
\begin{cases}
\sum\limits_{v \colon u\rightarrow v} g_{\beta}\big( r_{\bt^*,\gb}^{h_+}(v)\big),& \qquad \text{if } v\notin \partial\textbf{t}^* \,, \\
2 \beta, & \qquad \text{if } v \in \partial\textbf{t}^* \,.
\end{cases}
\]
With a slight abuse of notation, we can extend the definition to the tree $\bt$ by setting $r_{\bt^*,\gb}^{h_+}(u) =0$ if $u \in \bt\setminus \bt^*$.
Now, this extended definition of $r_{\bt^*,\gb}^{h_+}$ yields exactly the same recursion as $r_{\bt,\gb}^{h}(u)$ so $r_{\bt^*,\gb}^{h_+}(u)=r_{\bt,\gb}^{h}(u)$ for all $u\in \bt$.
\end{proof}

With Lemma~\ref{lem:tt*}, the rest of the paper then consist in studying the Ising model on the pruned version $\bT^*_n$ of a Galton--Watson tree $\bT_n$, with plus boundary external field.
More precisely, we show the following.

\begin{theorem}
\label{thm:reduced}
Let $\bT_n$ be a Galton--Watson tree of depth $n$
whose offspring distribution~$\mu$ satisfies Assumption~\ref{hyp:branching} and has a finite moment of order $q$ for some $q>1$, and let $(p_n)_{n\geq 0}$ be a vanishing sequence.
Let~$\bT^*_n$ be the pruned version of $\bT_n$ with $(h_v)_{v\in \partial \bT}$ given by i.i.d.\ Bernoulli random variables with parameter $p_n$, \textit{i.e.}\ $\bT_n^* = \mathrm{Pruned}_h(T_n^*)$.
Then, for the Ising model on $\bT_n^*$ with plus boundary external field on $\partial \bT_n^*$, the root is asymptotically magnetized if and only if
\[
\nu \tanh(\gb) >1 \qquad \text{ and } \qquad
\liminf_{n\to\infty} (\nu \tanh(\gb))^n p_n >0 \,. 
\] 
\end{theorem}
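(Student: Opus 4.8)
The statement is proved by assembling the machinery set up in the rest of the paper, so the plan is to say which piece goes where and to sketch the one genuinely new ingredient, the capacity estimate.

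\textbf{The ``only if'' direction.} Nothing beyond Section~\ref{sec:upper} is needed here. By the GKS inequality $\mathfrak m_{n,\gb}^{(\hat p_n)}\leq \mathfrak m_{n,\gb}^{(p_n)}$, so it suffices to rule out asymptotic magnetization of the root of model \textbf{(b)}. If $\nu\tanh(\gb)<1$, or $\nu\tanh(\gb)=1$ (for which $(\nu\tanh\gb)^n p_n=p_n\to0$), or $\nu\tanh(\gb)>1$ and $(\nu\tanh\gb)^n p_n\to0$, then Proposition~\ref{prop:upperLyons} gives $\bE\otimes\EE[r_{n,\gb}^{(p_n)}(\rho)]\to0$, hence $\mathfrak m_{n,\gb}^{(p_n)}\to0$ in $\bP\otimes\PP$-probability by Markov's inequality. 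If $\nu\tanh(\gb)>1$ but only $\liminf_n (\nu\tanh\gb)^n p_n=0$, applying the same bound along a subsequence $(n_j)$ on which $(\nu\tanh\gb)^{n_j}p_{n_j}\to0$ gives $\bP\otimes\PP(\mathfrak m_{n_j,\gb}^{(\hat p_n)}>\gep)\to0$ for every $\gep>0$, so $\liminf_{n}\bP\otimes\PP(\mathfrak m_{n,\gb}^{(\hat p_n)}>\gep)=0$ and~\eqref{eq:rootmagnetization} fails. This covers every case in which the stated condition is violated.

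\textbf{The ``if'' direction.} Assume $\nu\tanh(\gb)>1$ and $\liminf_n (\nu\tanh\gb)^n p_n>0$. By Lemma~\ref{lem:tt*} the root log-likelihood ratio for the Ising model on $\bT_n^*$ with plus boundary external field equals $r_{\bT_n,\gb}^h(\rho)$, and Theorem~\ref{thm:PemPer} (the Pemantle--Peres comparison recalled in Section~\ref{sec:PemPer}) reduces the question of whether this quantity stays bounded away from $0$ to whether the nonlinear capacity $\capa_{p}(\bT^*_n)$, computed with the $\gb$-dependent edge resistances prescribed there, stays bounded away from $0$. Thus the whole direction follows once we know that, under these hypotheses, $\capa_{p}(\bT^*_n)\geq c>0$ with $\bP\otimes\PP$-probability tending to $1$, which is precisely the lower-bound half of Proposition~\ref{prop:capacitypruned}. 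So the proof of Theorem~\ref{thm:reduced} is just the concatenation of Lemma~\ref{lem:tt*}, Theorem~\ref{thm:PemPer}, Proposition~\ref{prop:capacitypruned} and, for the converse, Proposition~\ref{prop:upperLyons}.

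\textbf{How I would prove the capacity lower bound.} By Thomson's principle $\capa_{p}(\bt)$ is comparable to $\bigl(\inf_\theta \sum_e r_e|\theta_e|^p\bigr)^{-1/(p-1)}$, the infimum being over unit flows $\theta$ from the root to the leaves, so it is enough to display, for $\bt=\bT_n^*$, a single flow of uniformly bounded energy. The natural candidate is the flow that at each vertex splits its incoming current among its children proportionally to the sizes of their pruned subtrees; modulo the fluctuations of the tree its energy is of order $\sum_{k=0}^{n} r_k (Z_k^*)^{1-p}$, with $Z_k^*:=|(\bT_n^*)_k|$. The input is that $\bT_n^*$ is an inhomogeneous $n$-dependent branching process whose generation sizes concentrate: for $\bP\otimes\PP$-typical trees $Z_k^*\asymp\min(\nu^k,\nu^{k^*})$ uniformly in $0\leq k\leq n$, with $k^*=\log(p_n\nu^n)/\log\nu$. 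This is where Sections~\ref{sec:pruning}--\ref{sec:shape} are used: the $\mu_k^*$ are explicit, Proposition~\ref{prop:d_TV} shows $\mu_k^*\approx\mu$ for $k\ll k^*$ and $\mu_k^*\approx\delta_1$ for $k\gg k^*$ with controlled means and $q$-th moments, and since a surviving vertex always has at least one surviving child, $Z_k^*$ is non-decreasing, which together with the moment control pushes the concentration through the near-critical plateau $k\in[k^*,n]$. Granting this and splitting the energy at $k^*$: the head $\sum_{k\leq k^*} r_k(\nu^k)^{1-p}$ is a convergent geometric series with ratio $(\nu\tanh\gb)^{-(p-1)}<1$ precisely because $\nu\tanh(\gb)>1$, while the tail $\sum_{k^*\leq k\leq n} r_k(\nu^{k^*})^{1-p}$ is of order $\bigl(p_n(\nu\tanh\gb)^n\bigr)^{-(p-1)}$, which stays bounded precisely because $\liminf_n p_n(\nu\tanh\gb)^n>0$. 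Both hypotheses of the theorem thus enter exactly once, as the two conditions that keep the flow energy finite.

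\textbf{Main obstacle.} The deterministic computation above is routine; the main obstacle is the probabilistic control of $\bT_n^*$ --- establishing $Z_k^*\gtrsim\min(\nu^k,\nu^{k^*})$ \emph{simultaneously} over all $0\leq k\leq n$, with tail bounds good enough that the occasional thin generation (for which $(Z_k^*)^{1-p}$ is large, as $1-p<0$) does not spoil the energy estimate, and doing so for a branching process that is not spherically symmetric and whose offspring laws form a triangular array depending on $n$. The finite $q$-th moment assumption on $\mu$ is what provides quantitative enough deviation bounds (rather than almost-sure Kesten--Stigum-type statements) to survive both the $n$-dependence and the passage through the transition at $k^*$; making this uniform control precise, and choosing the flow so that its energy is genuinely governed by $\sum_k r_k (Z_k^*)^{1-p}$ rather than by worst-case subtree sizes, is the technical heart of Section~\ref{sec:capacitypruned}.
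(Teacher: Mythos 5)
Your proposal is correct, and for the ``if'' direction it is essentially the paper's own argument: Lemma~\ref{lem:tt*} identifies the pruned-tree model with model \textbf{(c)}, Theorem~\ref{thm:PemPer} turns root magnetization into a statement about $\capa_{\frac32}(\bT_n^*)$, and the lower bound~\eqref{lowerTn*} of Proposition~\ref{prop:capacitypruned} gives the required tightness of $\capa(\bT_n^*)^{-1}$. The one step you elide is that~\eqref{lowerTn*} is only proved for $p=\frac{q}{q-1}\geq 2$, while Theorem~\ref{thm:PemPer} concerns the $\frac32$-capacity; the paper bridges this with the monotonicity of $p\mapsto \capa_p$ (Dirichlet principle), so that $\capa_{\frac32}(\bT_n^*)\geq \capa_p(\bT_n^*)$ --- worth stating explicitly, though not a gap in substance. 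For the ``only if'' direction you genuinely depart from the paper: the paper stays inside the capacity framework, combining the upper bound~\eqref{upperTn*} with $p=\frac32$ and Theorem~\ref{thm:PemPer}, whereas you dominate model \textbf{(c)} by model \textbf{(b)} via GKS and invoke Proposition~\ref{prop:upperLyons}. Both routes are valid; yours needs no capacity machinery for the negative direction and treats the case $\liminf_n p_n(\nu\tanh\gb)^n=0$ with $\limsup>0$ explicitly by a subsequence argument (left implicit in the paper's phrasing), while the paper's route yields a quantitative bound ($\alpha_n$) on the pruned-tree magnetization itself rather than on the larger model \textbf{(b)}. Finally, your sketch of how you would prove the capacity lower bound asks for more than the paper actually uses: you would establish $|T_k^*|\gtrsim \min(\nu^k,\nu^{k^*})$ simultaneously in $k$, whereas Section~\ref{sec:capacitypruned} avoids any uniform generation-size control --- it bounds the energy of the leaf-proportional flow by $(W_{0,n})^{-q}\sum_k (R^k M_{0,k})^{-s} M_{0,k}^{-1}\sum_v W_{k,n}(v)^q$, controls $\bE[(W_{k,n})^q]$ via Neveu's $q$-variance inequality and the estimates of Section~\ref{sec:shape}, and needs a lower bound only on the single variable $W_{0,n}$ (Lemma~\ref{lem:convmartingale}); this is precisely where the finite $q$-moment enters, and it is considerably less demanding than the simultaneous concentration you anticipate as the main obstacle.
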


\subsection{Relation between the magnetization and $p$-capacity}
\label{sec:PemPer}

A first crucial tool for proving Theorem~\ref{thm:reduced} is the following observation, made by Pemantle and Peres~\cite{PemPer10}: the magnetization of the root (more precisely the log-likelihood ratio) for the Ising model on a tree with plus boundary condition is comparable to some $L^p$-capacity, or $p$-capacity, of the tree, equipped with a specific set of resistances.
We state this as Theorem~\ref{thm:PemPer} below, but let us introduce the necessary notation first.

\subsubsection{Non-linear $L^p$-capacity of trees}

Let us start by defining the $L^p$- or $p$-capacity for a given (finite or infinite) tree $\bt=(V,E)$ rooted at a vertex $\rho$; we use analogous notation as in~\cite{AVB24}, see also \cite[\S2]{CNS21} for a brief introduction (the notation in~\cite{PemPer10} actually corresponds to the standard definitions of the $\frac{p}{p-1}$-capacity).
We let $\partial \bt$ be the set of maximal paths oriented away from the root. If the tree is finite, we can identify $\partial \bt$ with the set of leaves of $\bt$. For infinite trees, we assume that there is no leaf and that all paths in $\partial \bt$ are infinite. 

The tree $\bt$ is equipped with a set of resistances on its edges: to each edge $e  =uv$ with $u\rightarrow v$, assign a resistance $R_u:=R(e) \in \RR_{+}$ and a conductance $C_u:= C(e) = R(e)^{-1}$.
We say that a function $\theta: V \mapsto \RR_{+}$ is a flow on the tree $\bt$ if for every $u \in V$ it verifies 
$\theta(u) = \sum_{v \colon u \rightarrow v} \theta(v)$, \textit{i.e.}\ if the inflow is equal to the outflow at every vertex of the tree.
Additionally, we define the strength $\vert \theta \vert$ of a flow $\theta$ as $\vert \theta \vert  := \sum_{v \colon \rho \rightarrow v} \theta(v)$, that is, the total outflow from the root $\rho$.
A flow $\theta$ with $\vert \theta \vert = 1$ is called a \emph{unit} flow.

\begin{definition}[$L^p$-resistance and capacity by the Thomson's principle]
\label{def:pcapacity} 
Let $p > 1$. Then we define the effective $p$-resistance of the tree $\bt$ equipped with resistances $(R_e)_{e\in V}$ between the root $\rho$ and the leaves $\partial \bt$ as
\begin{align*}
\mathcal{R}_p(\rho\leftrightarrow \partial \bt) := \inf_{\theta : |\theta| =1} \bigg\{ \sum_{u\in \bt} R_u^{\frac{1}{p-1}} \theta(u)^{\frac{p}{p-1}}  \bigg\}^{p-1}\,.
\end{align*}
The $p$-capacity is then defined as
\[
\capa_p(\rho \leftrightarrow \partial \bt) = \mathcal{R}_p(\rho\leftrightarrow \partial \bt)^{-1} \,.
\]
By an abuse of terminology, we call $\capa_p(\rho \leftrightarrow \partial \bt)$ the $p$-capacity of the tree $\bt$ and we write $\capa_p(\partial \bt)$ to simplify notation.
\end{definition}

\begin{remark}
We stress that when $p=2$, the $2$-capacity $\capa_2(T)$ reduces to the usual electrical effective conductance between the root $\rho$ and the leaves of the tree; we refer to \cite[Ch.~2, 3 \& 9]{LyonsPeres} for an extensive introduction on electrical networks on graphs.
\end{remark}

\begin{example}[Spherically symmetric trees]
\label{ex:spherical}
Let $\bt$ be a spherically symmetric tree of depth $n \in \NN$, that is, such that all vertices of the same generation have the same number of direct descendants (for instance, a regular tree).
If we equip the tree $\bt$ with resistances $R_u=R_{|u|}$ that depend solely on the vertex generation, then $\capa_{p}(\bt)$ can be computed explicitly.
Indeed, one can check that the \textit{unit} flow (\textit{i.e.}\ $\theta(v) = |t_k|^{-1}$ for all $v\in t_k$) realizes the infimum in Definition~\ref{def:pcapacity}.
We thus obtain that
$\capa_{p}(\bt) = \big( \sum_{k=1}^n ( R_k/|t_k|)^{\frac{1}{p-1}} \big)^{-(p-1)}$.
\end{example}

Then, from simple series and parallel laws for the $p$-capacity, one can  establish a recursive expression for the $p$-capacity of a tree, see~\cite[Eq.~(1.4)]{AVB24} (see also \cite[Lem.~3.1]{PemPer10}, replacing $p$ by $\frac{p}{p-1}$).
Recall that for a vertex $u \in V$, $\bt(u)$ denotes the sub-tree of $\bt$ consisting of $u$ (as a root) and all descendants of $u$.

\begin{lemma}
\label{LemPemPer}
Let $\bt$ be a locally finite tree with root $\rho$. Fix $p > 1$ and for any vertex $u \in V$, define
\[
\phi_p (u) := R_u \capa_p(\bt(u)),
\]
where $R_{\rho} = 1$ by convention; in particular, $\phi(u)=R_u$ if $u$ is a leaf. Then, for any vertex $u\in V$, we have 
\begin{equation}
\label{defRecCapp}
\phi_p(u) = \sum_{v \colon u \rightarrow v} \dfrac{R_u}{R_v} \dfrac{\phi_p(v)}{(1 + \phi_p(v)^s)^{1/s}} \,, \qquad \text{ with } s=\frac{1}{p-1} \,.
\end{equation}
\end{lemma}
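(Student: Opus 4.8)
The plan is to prove the recursion \eqref{defRecCapp} by combining the series law and the parallel law for $p$-capacities. Recall that $\phi_p(u) = R_u \capa_p(\bt(u))$, so establishing \eqref{defRecCapp} is equivalent to an identity for $\capa_p(\bt(u))$, or equivalently for $\mathcal R_p(\rho \leftrightarrow \partial \bt(u))$, in terms of the quantities attached to the children of $u$. The key structural fact is that $\bt(u)$ decomposes as follows: from $u$, each child $v$ with $u \to v$ sits at the end of a single edge of resistance $R_v$, after which hangs the subtree $\bt(v)$; and these $d(u)$ ``edge-plus-subtree'' pieces are joined in parallel at $u$. So the computation reduces to (i) the series law: the $p$-resistance from $u$ to $\partial\bt(v)$ through the edge $uv$ is $R_v^{1/(p-1)}$-added appropriately to $\mathcal R_p(\bt(v))^{\,\cdot}$ — more precisely, in the $L^p$ setting the correct bookkeeping variable is $\mathcal R_p^{s}$ with $s=1/(p-1)$, which adds under series — and (ii) the parallel law: the $p$-capacities (in the appropriate power) of the parallel pieces add. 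I would either cite \cite[Lem.~3.1]{PemPer10} (with $p$ replaced by $p/(p-1)$) and \cite[Eq.~(1.4)]{AVB24} and simply note that \eqref{defRecCapp} is the stated consequence, or give the short self-contained derivation below.

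For the self-contained derivation, I would argue directly from the Thomson-principle definition. Fix $u$ and write $\mathcal R := \mathcal R_p(\rho \leftrightarrow \partial\bt(u))$, and for each child $v$ write $\mathcal R_v := \mathcal R_p(\rho \leftrightarrow \partial\bt(v))$ (the $p$-resistance of the subtree rooted at $v$ with its own resistances). A unit flow $\theta$ on $\bt(u)$ splits at $u$ into flows of strengths $\theta(v) =: a_v \geq 0$ with $\sum_v a_v = 1$ down each child; on $\bt(v)$, $\theta$ restricted and rescaled by $a_v^{-1}$ is a unit flow, so by homogeneity of degree $p/(p-1)$ of the energy functional, the energy contributed inside $\bt(v)$ is $a_v^{p/(p-1)} \mathcal R_v^{1/(p-1)}$ (optimizing over the internal flow), and the edge $uv$ contributes $R_v^{1/(p-1)} a_v^{p/(p-1)}$. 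Hence
\begin{equation*}
\mathcal R^{1/(p-1)} = \inf_{\substack{a_v \geq 0 \\ \sum_v a_v = 1}} \sum_{v \colon u \to v} \big( R_v^{1/(p-1)} + \mathcal R_v^{1/(p-1)} \big)\, a_v^{p/(p-1)} .
\end{equation*}
This is the standard parallel-law optimization: minimizing $\sum_v b_v a_v^{q}$ over the simplex with $q = p/(p-1) > 1$ gives $\big(\sum_v b_v^{-1/(q-1)}\big)^{-(q-1)}$, and here $q-1 = 1/(p-1) = s$ and $b_v = R_v^{s} + \mathcal R_v^{s}$, which after simplification is exactly the claimed identity once rewritten in terms of $\capa_p(\bt(v)) = \mathcal R_v^{-1}$ and $\phi_p(v) = R_v \capa_p(\bt(v))$. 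A short algebraic rearrangement — dividing through by $R_v^{s}$ and recognizing $\phi_p(v)/(1+\phi_p(v)^s)^{1/s}$ — then yields \eqref{defRecCapp}; the base case $\phi_p(u) = R_u$ when $u$ is a leaf is immediate since $\bt(u) = \{u\}$ has infinite $p$-capacity, or rather one treats leaves as the terminals so $\capa_p$ of the trivial tree is handled by the convention that the empty sum gives $\mathcal R = 0$, hence one should state the leaf case as the initialization and run the recursion upward.

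The main obstacle, and the only step requiring care, is getting the $L^p$ series/parallel bookkeeping exponents right: unlike the $p=2$ case where resistances add in series and conductances add in parallel, here it is $\mathcal R_p^{1/(p-1)}$ that adds in series along a path and $(\text{something})^{s}$ that combines across parallel branches, and a sign/reciprocal slip in the exponent $s = 1/(p-1)$ versus $p/(p-1)$ is easy to make. I would therefore double-check the homogeneity degree of the energy functional $\sum_u R_u^{1/(p-1)}\theta(u)^{p/(p-1)}$ (it is $p/(p-1)$ in $\theta$, hence the $p-1$-th power outside makes $\mathcal R_p$ homogeneous of degree $1$ under scaling all resistances, as it should be) and verify the simplex-minimization formula against a trivial example such as a spherically symmetric tree, cross-checking with Example~\ref{ex:spherical}. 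Everything else — measurability, the reduction to children, the convention $R_\rho = 1$ — is routine.
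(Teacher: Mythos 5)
Your main computation is right, and it is in substance the same argument the paper relies on: the paper does not prove Lemma~\ref{LemPemPer} but cites it from \cite[Eq.~(1.4)]{AVB24} and \cite[Lem.~3.1]{PemPer10}, and those references are themselves proved exactly by the Thomson-principle series/parallel decomposition you write out. Your branch decomposition, the homogeneity of degree $q=\frac{p}{p-1}$, and the simplex minimization $\inf_{\sum a_v=1}\sum_v b_v a_v^{q}=\big(\sum_v b_v^{-1/(q-1)}\big)^{-(q-1)}$ with $q-1=s$ are all correct, and plugging $b_v=R_v^{s}+\mathcal R_v^{s}$, $\phi_p(v)=R_v/\mathcal R_v$ does simplify exactly to \eqref{defRecCapp}.

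The one genuine flaw is your treatment of the leaves, and it is not just cosmetic. You assert that $\bt(u)=\{u\}$ has infinite $p$-capacity (equivalently $\mathcal R_u=0$ from the empty sum); but the lemma's initialization $\phi_p(u)=R_u$ forces $\capa_p(\bt(u))=1$, i.e.\ $\mathcal R_u=1$, for a leaf. The two choices are not interchangeable: feeding $\mathcal R_v=0$ into your own identity at a parent of leaves gives $\phi_p(u)=\sum_v R_u/R_v$, whereas \eqref{defRecCapp} with $\phi_p(v)=R_v$ gives $\phi_p(u)=\sum_v R_u\,(1+R_v^{s})^{-1/s}$, so the recursion you would propagate upward is a different one. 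The correct fix is to fix the boundary convention once and for all: define the energy on $\bt(u)$ as $\sum_{w\neq u}R_w^{s}\theta(w)^{q}+\sum_{\ell\in\partial\bt(u)}\theta(\ell)^{q}$ (equivalently, append a unit resistor below each leaf, so that $\capa_p$ of a single-vertex tree is $1$). With that convention your identity $\mathcal R(u)^{s}=\inf_{\sum a_v=1}\sum_v\big(R_v^{s}+\mathcal R(v)^{s}\big)a_v^{q}$ holds verbatim at every internal vertex, including parents of leaves, and \eqref{defRecCapp} follows exactly with the stated initialization. (Be aware that the paper itself is loose here: Definition~\ref{def:pcapacity} literally sums over all $u\in\bt$ while Example~\ref{ex:spherical} starts at $k=1$, and neither matches the leaf convention exactly; these discrepancies only shift the energy by bounded terms and are harmless for the up-to-constants use in Theorem~\ref{thm:PemPer}, but an exact proof of \eqref{defRecCapp} must commit to one convention, and your write-up currently hedges between two incompatible ones.)
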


The iteration~\eqref{defRecCapp} is extremely useful and is the starting point of the estimates of the $p$-capacity of (homogeneous) Galton--Watson trees performed in~\cite{AVB24}.
In particular, \cite[\S1.3.1]{AVB24} shows that if $\bT_n$ is a Galton--Watson tree equipped with resistances $R_u =R^{-|u|}$ for some $R>0$, then under a $\frac{p}{p-1}$-moment condition, we have that 
\begin{equation}
\label{eq:capaGW}
\capa_p(\bT_n) \asymp_{\PP} \bigg( \sum_{k=1}^n  (R \nu )^{- k s} \bigg)^{-1/s} \,\qquad \text{ with } s := \frac{1}{p-1} \,,
\end{equation}
where $a_n\asymp_{\PP} b_n$ means that the ratio $a_n/b_n$ is tight in $(0,+\infty)$; note the analogy with the formula for spherically symmetric trees of Example~\ref{ex:spherical}.

\begin{remark}
\label{rem:critical}
We stress that the moment assumption is most crucial in the critical case $R=\nu^{-1}$, which gives $\capa_p(\bT_n) \asymp_{\PP} n^{-1/s}$; without the moment assumption, one gets a different behavior.
Let us quickly summarize our main findings, in informal terms: we refer to~\cite[Thms.~1.8 \& 1.10]{AVB24} for precise statements.
\begin{itemize}
\item[(i)] If the offspring distribution admits a finite $q=\frac{p}{p-1}$-moment, then we have that  $n^{-\frac{1}{q-1}} \capa_p(\bT_n)$ converges almost surely and in $L^q$ to a tree-dependent constant;
\item[(ii)] If the offspring distribution has a tail $\sum_{k\geq n} \mu(k) = n^{-\alpha +o(1)}$ for some $\alpha \in (1,3)$ as $n\to\infty$, then we have that $\capa_p(\bT_n) = n^{-\frac{1}{\alpha-1} +o(1)}$ as $n\to\infty$.
\end{itemize}
In any case, the moment condition does not change the critical value $R=\nu^{-1}$, but only the critical behavior. 
\end{remark}

\subsubsection{Relation to the log-likelihood ratio via Lyons' iteration}

Lemma~\ref{LemPemPer} is reminiscent of the iteration~\eqref{eq:Lyonsiteration} for the log-likelihood ratio, namely $r_{\gb}(u) = \sum_{v \colon u\rightarrow v} g_{\gb}(r_{\gb}(v))$. Additionally, \cite[Lem.~4.2]{PemPer10} observes that the function ${g_{\gb}(x) = \log (\frac{e^{2\gb} e^{x}+1}{e^{2\gb} +e^x})}$ verifies
\[
\frac{\tanh(\gb) x}{ (1+ c_2 x^2 )^{1/2}} \leq g_{\gb}(x) \leq \frac{\tanh(\gb) x}{ (1+ c_1 x^2 )^{1/2}} \,,
\]
for some $c_1,c_2>0$.
Therefore, a direct consequence of Theorem~3.2 in~\cite{PemPer10} and of the recursion~\eqref{eq:Lyonsiteration} is the following (again, replacing the $3$-capacity by the  $\frac32$-capacity in~\cite{PemPer10}).

\begin{theorem}[Theorem 3.2 in~\cite{PemPer10}]
\label{thm:PemPer}
Let $\bt$ be a tree with a set of leaves $\partial \bt$. 
There are constants $\kappa_1,\kappa_2$ such that, for the Ising model on $\bt$ with plus boundary condition,  the log-likelihood ratio of the root verifies
\[
\kappa_1 \capa_{\frac32}(\bt) \leq r_{\bt,\gb}^{+}(\rho) \leq \kappa_2 \capa_{\frac32}(\bt)\,.
\]
Here $\capa_{\frac32}(\bt)$ is the $\frac32$-capacity of the tree $\bt$ equipped with resistances $R_u := \tanh(\gb)^{-|u|}$.
\end{theorem}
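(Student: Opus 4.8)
The plan is to combine Lyons' iteration~\eqref{eq:Lyonsiteration} for the log-likelihood ratio with the recursive characterization of the $p$-capacity in Lemma~\ref{LemPemPer}, applied with $p=\frac32$ (so that $s=\frac{1}{p-1}=2$), and to use the quoted two-sided bound on $g_\beta$ to sandwich the two recursions against each other. Concretely, set $s=2$ and equip $\bt$ with the resistances $R_u=\tanh(\beta)^{-|u|}$, so that $R_u/R_v=\tanh(\beta)$ for every edge $u\to v$; then the capacity recursion~\eqref{defRecCapp} reads $\phi(u)=\tanh(\beta)\sum_{v\colon u\to v}\phi(v)(1+\phi(v)^2)^{-1/2}$ for non-leaves, with $\phi(u)=R_u$ at leaves. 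This has exactly the shape of Lyons' recursion with the function $x\mapsto \tanh(\beta)\,x(1+cx^2)^{-1/2}$ in place of $g_\beta$, and the point is that by~\cite[Lem.~4.2]{PemPer10} there exist $c_1,c_2>0$ with
\[
\frac{\tanh(\beta)x}{(1+c_2x^2)^{1/2}}\le g_\beta(x)\le \frac{\tanh(\beta)x}{(1+c_1x^2)^{1/2}}\qquad\text{for all }x\ge 0.
\]

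The first step is to rescale: introduce $\tilde\phi(u):=\lambda\,\phi(u)$ for a suitable constant $\lambda>0$ (chosen in terms of $c_1$ or $c_2$) and observe that the map $x\mapsto\tanh(\beta)x(1+c x^2)^{-1/2}$ is, after this rescaling, bounded above (resp.\ below) by $g_\beta$ when $c=c_1$ (resp.\ $c=c_2$), because the bound on $g_\beta$ is invariant under $x\mapsto \lambda x$ up to adjusting the constant inside the square root; one needs $\lambda^2\ge c_1$, resp.\ $\lambda^2\le c_2$, together with possibly shrinking $\lambda$ to handle the behaviour near the leaves where $r^+_{\bt,\beta}(v)=+\infty$ while $\phi(v)=R_v$ is finite. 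The second step is a monotonicity/induction argument along the generations from the leaves to the root: since both $g_\beta$ and $x\mapsto\tanh(\beta)x(1+cx^2)^{-1/2}$ are non-decreasing on $[0,\infty)$, and since the recursions are built by summing over children and then applying these non-decreasing functions, an inequality between the iterated quantities at generation $k+1$ propagates to generation $k$. Thus, choosing $\lambda$ appropriately, one gets $\lambda_1\phi(\rho)\le r^+_{\bt,\beta}(\rho)\le \lambda_2\phi(\rho)$ for constants $\lambda_1,\lambda_2$ depending only on $\beta$; since $R_\rho=1$ by convention, $\phi(\rho)=\capa_{3/2}(\bt)$, which is the claim with $\kappa_i=\lambda_i$.

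The main obstacle, and the place where care is needed, is the boundary/initialization discrepancy: Lyons' recursion starts from $r^+_{\bt,\beta}(v)=+\infty$ at leaves, whereas the capacity recursion starts from the finite value $\phi(v)=R_v=\tanh(\beta)^{-|v|}$ at leaves. One cannot directly compare $+\infty$ with a finite number, so the comparison has to be set up one generation above the leaves: at the penultimate generation, $r^+_{\bt,\beta}(u)=\sum_{v\colon u\to v}g_\beta(+\infty)=d(u)\cdot 2\beta$ (a bounded, explicit quantity since $g_\beta(+\infty)=2\beta$), while $\phi(u)=\tanh(\beta)\sum_{v\colon u\to v}R_v(1+R_v^2)^{-1/2}$ is comparable to $d(u)\tanh(\beta)\cdot\min(1,R_v)$; one checks that these two penultimate-generation quantities are comparable up to multiplicative constants depending only on $\beta$, uniformly in $d(u)$, and then runs the induction from that generation inward using only the bounds on $g_\beta$ on $[0,\infty)$. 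The remaining verification — that the rescaled square-root maps genuinely sandwich $g_\beta$ for the chosen $\lambda$, and that comparability is preserved under $u\mapsto\sum_{v\colon u\to v}(\cdot)$ followed by a non-decreasing subadditive-type map — is routine; this is exactly the content of Theorem~3.2 in~\cite{PemPer10}, which we may invoke, so in the paper this ``proof'' is really the short remark that our $\frac32$-capacity with resistances $\tanh(\beta)^{-|u|}$ matches their $3$-capacity setup after the substitution $p\leftrightarrow\frac{p}{p-1}$.
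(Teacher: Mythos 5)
Your proposal is correct and matches what the paper actually does: the paper does not reprove this statement but simply invokes \cite[Thm.~3.2]{PemPer10}, noting (via \cite[Lem.~4.2]{PemPer10}) that $g_\gb$ is sandwiched between maps of the form $x\mapsto \tanh(\gb)x(1+c x^2)^{-1/2}$ so that Lyons' recursion~\eqref{eq:Lyonsiteration} and the capacity recursion of Lemma~\ref{LemPemPer} can be compared, and that their $3$-capacity is this paper's $\frac32$-capacity under $p\leftrightarrow \frac{p}{p-1}$. Your additional sketch of the rescaling-plus-induction argument (with the care taken at the leaves, where $r^+=+\infty$ but $\phi(v)=R_v$ is finite) is exactly the content of the cited Pemantle--Peres proof, so no gap remains.
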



The goal of the rest of the paper is therefore to estimate the $\frac32$-capacity of the pruned tree $\bT_n^*$.
We will prove the following more general result.

\begin{proposition}
\label{prop:capacitypruned}
Let $\bT_n$ be a Galton--Watson tree with offspring distribution $\mu$ satisfying Assumption~\ref{hyp:branching}, and let~$\bT^*_n = \mathrm{Pruned}_h(\bT_n)$ be the pruned version of $\bT_n$ with $(h_v)_{v\in \partial \bT}$ given by i.i.d.\ Bernoulli random variables with parameter $p_n \in (0,1]$, as defined in Section~\ref{sec:comparePruned}.
Let $p>1$ and $q:= \frac{p}{p-1}$, and define 
\[
\alpha_n =\alpha_n(p) :=
\begin{cases}
 p_n (\tanh(\gb) \nu)^{n} &  \quad \text{ if } \tanh(\gb)\nu\neq 1 \,,\\
\min\{n^{-1/(q-1)}, p_n\} & \quad \text{ if } \tanh(\gb)\nu =1\,.
\end{cases}
\]

\textbullet\ \emph{Upper bound.} There is a constant $c>0$ such that 
\begin{equation}
\label{upperTn*}
\bE\otimes \EE \big[ \capa_{p}(\bT^*_n) \big]  \leq c \,\alpha_n  \,.
\end{equation}

\textbullet\ \emph{Lower bound.} If $p\geq 2$ and the offspring distribution $\mu$ admits a finite moment of order $q = \frac{p}{p-1} \in (1,2]$ and if $\liminf_{n\to\infty} p_n \nu^n =+\infty$, we have that 
\begin{equation}
\label{lowerTn*}
\lim_{\gep \downarrow 0}\limsup_{n\to\infty} \bP\otimes \PP \big( \capa_{p} (\bT^*_n) \leq \gep \alpha_n \big) =0  \,.
\end{equation}
\end{proposition}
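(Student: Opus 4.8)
The plan is to estimate $\capa_p(\bT^*_n)$ via the recursive/variational characterization in Definition~\ref{def:pcapacity}, treating separately the upper and lower bounds. Throughout, recall that the pruned tree $\bT^*_n$ is, by the analysis announced in Section~\ref{sec:pruning}, an inhomogeneous $n$-dependent branching process with offspring distributions $(\mu_k^*)_{0\le k\le n-1}$ exhibiting a sharp transition at $k^* = \log(p_n\nu^n)/\log\nu$: for $k\ll k^*$ the law $\mu_k^*$ is close to $\mu$, and for $k\gg k^*$ it is close to $\delta_1$. The resistances are $R_u=\tanh(\gb)^{-|u|}$, so writing $R=\tanh(\gb)$ and $s=\frac{1}{p-1}$, one expects $\capa_p(\bT^*_n)$ to behave like the spherically symmetric formula of Example~\ref{ex:spherical} with generation sizes $\approx \nu^k$ for $k\le k^*$ and $\approx p_n\nu^n$ (the ``surviving'' population) for $k^*\le k\le n$; this gives exactly $\alpha_n$ in the regime $\tanh(\gb)\nu\neq1$, and the critical-case truncation $\min\{n^{-1/(q-1)},p_n\}$ reflects the slower decay when $R\nu=1$, in analogy with \eqref{eq:capaGW} and Remark~\ref{rem:critical}.

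For the \textbf{upper bound}, I would exhibit an explicit unit flow $\theta$ on $\bT^*_n$ and plug it into the Thomson functional $\sum_{u} R_u^{s}\theta(u)^{1+s}$; the simplest choice is the flow that splits uniformly at each vertex according to its number of surviving children, i.e.\ $\theta(v) = \prod_{\rho < w \le v} d^*(\text{parent of }w)^{-1}$ where $d^*$ counts children in $\bT^*_n$. Taking $\bE\otimes\EE$ of the resulting sum and using the independence structure of the branching process (conditioning generation by generation), each generation-$k$ term contributes something like $R^{ks}\,\EE[\,(\text{number of generation-}k \text{ vertices})\cdot(\text{flow})^{1+s}]$, which by a size-biasing / Jensen argument reduces to controlling $\nu_k^* := $ mean of $\mu_k^*$ and the survival probability of the root of $\bT^*_n$ (which is $\asymp p_n\nu^n$ when this is $\le 1$). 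Summing the geometric-type series in $k$ and optimizing over where the transition $k^*$ sits yields $c\,\alpha_n$; the critical case needs the extra $n^{-1/(q-1)}$ truncation, handled exactly as in the homogeneous case by a more careful flow (non-uniform in $k$) as in~\cite{AVB24}. No moment condition is needed here since we only produce an \emph{admissible} flow.

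For the \textbf{lower bound}, the strategy is the reverse: we must show \emph{every} unit flow has large Thomson energy, equivalently $\mathcal{R}_p(\bT^*_n) \le C\alpha_n^{-1}$ with high probability. Here I would use the recursion~\eqref{defRecCapp} for $\phi_p(u) = R_u\capa_p(\bT^*_n(u))$ together with the moment hypothesis ($\mu$ has a finite $q$-moment, $q\le 2$) to run a second-moment / concentration argument along generations, in the spirit of the proof of~\eqref{eq:capaGW} in~\cite{AVB24}: one controls $\EE[\phi_p(u)]$ and $\EE[\phi_p(u)^2]$ (or a truncated version) generation by generation, shows the normalized quantity stays tight and bounded below, and then transfers this to $\capa_p(\bT^*_n) = \phi_p(\rho)$. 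The condition $\liminf p_n\nu^n = +\infty$ ensures the surviving population at generation $k^*$ is large, so that a law-of-large-numbers effect kicks in and the pruned tree genuinely ``looks like'' a supercritical GW tree of height $\approx k^*$ grafted onto spherically symmetric straight-ish pieces — this is what makes the lower bound match the upper bound. The restriction $p\ge 2$, i.e.\ $s=\frac1{p-1}\le 1$, is what allows the subadditivity / moment estimates (in particular using $q\le 2$ so that a finite second moment of $\mu$ suffices) to go through.

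The \textbf{main obstacle} is the lower bound, specifically handling the triangular-array nature of $(\mu_k^*)$: unlike in~\cite{AVB24} the offspring laws vary \emph{both} with $k$ and with $n$, so one cannot directly quote an a.s.\ limit theorem and must instead prove uniform (in $n$) tightness and anti-concentration estimates for $\phi_p(\rho)/\alpha_n$. The key technical inputs will be the quantitative control of $d_{\mathrm{TV}}(\mu_k^*,\mu)$ and $d_{\mathrm{TV}}(\mu_k^*,\delta_1)$ from Proposition~\ref{prop:d_TV} and the estimates on means and variances of $\mu_k^*$ from Section~\ref{sec:shape}, which let us approximate the genuinely-branching part ($k\lesssim k^*$) and the essentially-deterministic part ($k\gtrsim k^*$) of $\bT^*_n$ separately and then glue the two $p$-capacity estimates via the series law; the critical case $\tanh(\gb)\nu=1$ requires the most delicate bookkeeping because the two competing scales $n^{-1/(q-1)}$ and $p_n$ must be compared carefully.
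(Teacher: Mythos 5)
There is a genuine gap, and it is directional: you have the Thomson principle backwards in both halves. Since $\mathcal{R}_p(\rho\leftrightarrow\partial\bt)$ is an \emph{infimum} over unit flows and $\capa_p=\mathcal{R}_p^{-1}$, exhibiting one admissible flow and computing its energy gives an upper bound on the resistance, hence a \emph{lower} bound on the capacity — it can never yield the upper bound~\eqref{upperTn*} you claim to prove with it. Conversely, your lower-bound paragraph is internally inconsistent: ``show every unit flow has large Thomson energy'' would prove $\mathcal{R}_p$ is large, i.e.\ $\capa_p$ is small, which is the opposite of what you then write (``equivalently $\mathcal{R}_p\le C\alpha_n^{-1}$''); what is actually needed for~\eqref{lowerTn*} is precisely a good explicit flow. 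This swap also explains why your placement of the moment hypothesis is inverted: in the paper the upper bound needs no moment condition because it comes from the recursion of Lemma~\ref{LemPemPer} for $\phi_p(u)=R_u\capa_p(\bT_n^*(u))$ together with Jensen's inequality (concavity of $x\mapsto x/(1+x^s)^{1/s}$), giving $\bE[\capa_p]\le\big(\sum_k (R^kM^*_{0,k})^{-s}\big)^{-1/s}$ and then~\eqref{upperTn*} via Lemma~\ref{lem:boundMk}; it is the \emph{lower} bound, proved by plugging the uniform flow $\hat\theta(u)=|t_{k,n}(u)|/|t^{(n)}_n|$ into Thomson's principle, that requires the finite $q$-moment: one must control $\bE[W_{k,n}^q]$ for the additive martingales via Neveu's lemma (the $v_{k,n}$ terms), and then show $|T_n^*|/M^*_{0,n}$ stays bounded away from $0$ (here the explicit $\Bin(|T_n|,p_n)$ structure, the hypothesis $\liminf p_n\nu^n=\infty$, and the a.s.\ positivity of $\lim|T_n|/\nu^n$ enter).

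Your surrounding picture is right — the role of $k^*$, the heuristic value of $\alpha_n$ as in Remark~\ref{rem:order}, the key inputs from Section~\ref{sec:shape} (Proposition~\ref{prop:d_TV}, Lemmas~\ref{lem:boundmk}--\ref{lem:boundMk}), and the need for $\liminf p_n\nu^n=\infty$ — but the proof mechanisms as stated do not prove the inequalities they are attached to. The recursion-plus-concentration route you sketch for the lower bound is not developed enough to replace the flow argument (Jensen on the recursion only bounds $\bE[\phi_p]$ from above; a lower bound would need a separate anti-concentration input you do not supply), so as written neither direction goes through. Fixing this amounts to exchanging the two mechanisms: recursion+Jensen for~\eqref{upperTn*}, explicit leaf-proportional flow plus Neveu's lemma and control of $W_{0,n}$ for~\eqref{lowerTn*}, which is exactly the paper's Proposition~\ref{prop:capacity} specialized to $\bT_n^*$.
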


\begin{proof}[Conclusion of the proof of Theorem~\ref{thm:reduced}]
Let us start with the case where $\tanh(\gb)\nu \leq 1$ or $\lim_{n\to\infty} p_n (\tanh(\gb) \nu)^n =0$. Then, Proposition~\ref{prop:capacitypruned}-\eqref{upperTn*} with $p=\frac32$ shows that $\capa_{\frac32}(\bT^*_n)$ converges to $0$ in $L^1$, hence in probability. 
Thanks to Theorem~\ref{thm:PemPer}, this shows that the root is not asymptotically magnetized.

If on the other hand $\tanh(\gb) \nu>1$ and $\liminf_{n\to\infty} p_n (\tanh(\gb) \nu)^n >0$, then applying Proposition~\ref{prop:capacitypruned}-\eqref{lowerTn*} with $p=\frac{q}{q-1} \geq 2$ (with $q\in (1,2]$ such that $\mu$ admits a finite moment of order $q$), we obtain that $\capa_{p} (\bT^*_n)$ remains asymptotically bounded away from $0$; more precisely, $\{\capa_{p} (\bT^*_n)^{-1}\}_{n\geq 0}$ is tight.
Now, we can use the fact that $p\mapsto \capa_{p} (\bT^*_n)$ is non-increasing (this directly follows from the Dirichlet principle for the $p$-capacity, see e.g.\ \cite[Thm.~2.10]{CNS21}), so $\capa_{\frac32} (\bT^*_n) \geq \capa_{p} (\bT^*_n)$ also remains asymptotically bounded away from $0$.
Combined with Theorem~\ref{thm:PemPer}, this shows that the root is asymptotically magnetized.
\end{proof}

\begin{remark}
From the monotonicity property of the $p$-capacity, we have that $\alpha_n(\frac{q}{q-1}) \lesssim \capa_{\frac32}(\bT^*_n) \lesssim \alpha_n(\frac32)$, where $a_n\lesssim b_n$ if the ratio $(a_n/b_n)_{n\geq 0}$ is tight, and similar bounds hold for the root magnetization; we stress that these estimates should enable one to \emph{quantify} the influence of the interfering vertices in random graphs.
For instance, when $\tanh(\gb)\nu\neq 1$, observe that $\alpha_n(p)$ does not depend on $p$, which means that we have identified the correct order $\min\{1,p_n (\tanh(\gb)\nu)^n\}$ for the root magnetization, with only a $q$-moment assumption for some $q>1$.
In the critical case $\tanh(\gb) \nu = 1$, the moment condition appears in the upper and lower bound which differ in general (they match when $p_n \leq n^{-1/(q-1)}$).
It would be interesting to obtain matching upper and lower bounds in the critical case.
However, since both bounds converge to zero, we do not develop further in this direction to avoid lengthening the paper.
Nevertheless, the techniques of~\cite{AVB24} should enable one to obtain sharp bounds, see also Remark~\ref{rem:critical}.
\end{remark}

%
%

\noindent
Now, the rest of the proof consists in proving Proposition~\ref{prop:capacitypruned}, \textit{i.e.}\ in estimating the $p$-capacity of the pruned tree $\bT_n^*$.
We proceed in two steps, which are split into the next two sections:
\begin{itemize}
\item In Section~\ref{sec:pruning}, we study in detail the properties of $\bT_n^*$.
First of all, we prove that, under $\bP\otimes \PP$, $\bT_n^*$ is an inhomogeneous branching process, whose distribution is explicit. 
Second, we show that the pruned tree exhibits a sharp phase transition: we identify some $k^*= n - \log_{\nu}(\frac{1}{p_n})$ such that the pruned tree looks likes the original Galton--Watson tree up $k^*-O(1)$ and then has very thin branches from $k^*+O(1)$ to $n$.

\item  In Section~\ref{sec:capacitypruned}, we turn to the estimate of Proposition~\ref{prop:capacitypruned}. 
First, we deal with a generic inhomogeneous Galton--Watson tree because it has its own interest; then we use the estimates of Section~\ref{sec:pruning} to conclude the proof.
\end{itemize}

\begin{remark}
\label{rem:order}
In view of the above comments, we can already explain, at least when $\tanh(\gb) \nu > 1$,  where the order $\capa_{p} (\bT_n^*) \approx \min\{ 1, p_n (\tanh(\gb) \nu)^n \} $ comes from in Proposition~\ref{prop:capacitypruned}.
Similarly to~\eqref{eq:capaGW} (and in analogy with Example~\ref{ex:spherical}), one expects that under sufficient moment conditions,
\[
\capa_{p} (\bT_n^*) \asymp_{\PP}   \bigg( \sum_{k=1}^n  \Big( \tanh(\gb)^{k} \prod_{i=1}^k \nu_i^* \Big)^{- s} \bigg)^{-1/s} \,\qquad \text{ with } s := \frac{1}{p-1}\,,
\]
where $\nu_i^*$ is the mean offspring number of the $i$-th generation of $\bT_n^*$ (this is actually true in our context, see Proposition~\ref{prop:capacity}, but requires technical work).
By the phase transition property alluded above, we roughly have $\nu_i^* =\nu$ for $i\leq k^*$ and $\nu_i^* =1$ for $i>k^*$, see Lemma~\ref{lem:boundmk} for a more precise statement. Thus, in the case $\tanh(\gb)\nu>1$, we get that $\capa_{p} (\bT_n^*)$ is of the order
\begin{align*}
  \bigg( \sum_{k=1}^{k^*} (\tanh(\gb) \nu)^{-ks} + \sum_{k=k^*}^n \tanh(\gb)^{-ks} \nu^{-k^*s}  \bigg)^{-1/s} \asymp  \Big( C + \big(p_n (\tanh(\gb)\nu)^n \big)^{-s} \Big)^{-1/s} \,,
\end{align*}
where we have used for the last identity that $\tanh(\gb)<1$ and that $\nu^{k^*} = p_n \nu^{n}$ by the definition of $k^*$.
This corresponds to the claim in Proposition~\ref{prop:capacitypruned}; the case $\tanh(\gb)\nu=1$ is a bit more subtle.
\end{remark}

\section{Detailed study of the pruned tree}
\label{sec:pruning}

\subsection{The pruned tree is an inhomogeneous Galton--Watson tree}

Recall that we denote by $\bP$ the law of $\bT$ and by $\PP$ the law of the i.i.d.\ Bernoulli random variables $(h_v)_{v\in \partial \bT}$.
The main result of this section is that $\bT_n^*$, under $\bP\otimes \PP$, is a Branching Process with \textit{inhomogeneous} \textit{$n$-dependent} generation offspring distributions, that we denote $(\mu_k^*)_{0 \leqslant k \leqslant n-1}$.
The fact that $\bT^*_n$ is a branching process is a priori not obvious, since the pruning of a dead branch in the tree depends on random variables at the leaves.


Before we state the main result of this section, let us introduce some notation.
For $k\in\{0,\ldots, n\}$, we denote $T_k$, respectively $T^*_k$, the $k$-th generation of $\mathbf{T}$, respectively $\bT_n^*$.
We set, for $k \in \{0,\ldots, n\}$,
\begin{equation}
\label{GammaPruningProb}
\gamma_k := \bP\otimes \PP (Y_u = 0) \quad \text{ for a generic }  u\in T_k\,,
\end{equation}
where $Y_u =\ind_{\{u\in \bT^*_n\}}$ is a Bernoulli random variable that indicates whether the vertex~$u$ has survived the pruning, see~\eqref{def:Xv}.

Here, $\gamma$ is indexed by their generations starting from the root (\textit{i.e.}\ from the bottom of the tree to the top), but it is also helpful to index quantities by their distances to the leaves (\textit{i.e.}\ from the top of the tree to the bottom).
In the rest of the paper, we will denote with a bar quantities that are indexed by the distance to the leaves: for instance, for $k \in \{0,\ldots, n\}$, we set 
\[
\bar \gamma_k := \gamma_{n-k} \,.
\] 
We show below that we can rewrite $\bar\gamma_k$ as $\bar{\gamma}_k = \bE[ (1-p_n)^{|T_k|}]$, see Lemma~\ref{defGamma}-\eqref{def:bargammak}.

We can now state our main result on the random pruned tree $\bT_n^*$.

\begin{proposition}
\label{prop:pruning}
The tree $\bT^*_n = \mathrm{Pruned}_{h}(\bT_n)$ obtained by the pruning of the Galton--Watson tree $\bT_n$ by i.i.d.\ Bernoulli random variables $(h_v)_{v\in \partial\bT_n}$ is, under $\bP\otimes \PP$, an \emph{inhomogeneous} branching process. Its offspring distributions are $(\tilde \mu_0^*, \mu^*_1,\ldots, \mu_{n-1}^*)$, where~$\mu_k^*$ is defined by
\begin{equation}
\label{defmuk}
\mu_k^*(d) =\dfrac{1}{1-\gamma_{k}} \sum_{\ell \geqslant 0} \mu(d+\ell) \binom{d+\ell}{\ell} (\gamma_{k+1})^\ell (1-\gamma_{k+1})^d, \qquad d\in \NN \,,
\end{equation}
and  $\tilde \mu_0^*(d) = (1-\gamma_0) \mu_0^*(d)$ for $d\geq 1$, $\tilde \mu_0^*(0) =\gamma_0$.
\end{proposition}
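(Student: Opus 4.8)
The plan is to verify the branching property directly by exhibiting the conditional law of the successors of a vertex $u \in T_k^*$ given the structure of the pruned tree up to generation $k$. The key observation is that, conditionally on the realization of $\bT_n$ up to generation $k$ together with the survival indicators $(Y_w)_{|w|\leq k}$, the subtrees $\bT_n(v)$ hanging from distinct vertices $v$ at generation $k$ are independent copies of a Galton--Watson tree of depth $n-k$, and the Bernoulli field $(h_w)$ restricted to the leaves below each such $v$ is an independent i.i.d.\ family. Hence the events $\{Y_v = 1\}$ for the children $v$ of a given $u$ are independent given the past, each with probability $1-\gamma_{k+1}$ (this is where the definition~\eqref{GammaPruningProb} enters, and one should note $\gamma_k$ depends only on the generation because the subtree law does).

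The main computation is then the following. Fix $u\in T_k^*$ (so $Y_u=1$, $0\le k\le n-1$) and let $D := d(u)$ be its number of children in $\bT_n$, which has law $\mu$ (or $\mu_0$ at the root, but here the root is handled separately). Each of the $D$ children survives independently with probability $q_{k+1}:=1-\gamma_{k+1}$; write $d$ for the number of surviving children, i.e.\ the offspring of $u$ in $\bT_n^*$. I would compute, for $d\ge 0$,
\[
\PP(d(u) \text{ in } \bT_n^* = d \mid Y_u = 1) = \sum_{\ell\ge 0} \PP(D = d+\ell)\binom{d+\ell}{\ell} \gamma_{k+1}^{\ell} (1-\gamma_{k+1})^{d} \Big/ \PP(Y_u=1)\,,
\]
where the sum over $\ell$ ranges over the number of children that did not survive, the binomial counts the ways to choose them, and we condition on $\{Y_u=1\}$. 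Since $\PP(Y_u=1) = 1-\gamma_k$, this is exactly~\eqref{defmuk}. One should double-check the degenerate contribution: a vertex $u$ with $Y_u=1$ necessarily has at least one surviving child (unless $k=n$), so $\mu_k^*$ is automatically supported on $\NN$ with the right normalization --- indeed summing~\eqref{defmuk} over $d\ge 0$ must give $1$, which follows from the consistency relation $1-\gamma_k = \EE\big[(1-(1-\gamma_{k+1}))^{-D}\cdot(\dots)\big]$; more cleanly, from the recursion $1-\gamma_k = 1 - G_\mu(\gamma_{k+1})$ where $G_\mu$ is the generating function of $\mu$, which I would record as a small lemma (and which matches the $\bar\gamma_k = \bE[(1-p_n)^{|T_k|}]$ formula announced after the statement). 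For the root, $\rho$ survives iff at least one leaf below it carries $h_v=1$, which happens with probability $1-\gamma_0$; on survival its offspring law in $\bT_n^*$ is $\mu_0^*$ (the analogue of~\eqref{defmuk} with $k=0$, using $\mu_0$ if the root has a special law, but here $\mu_0=\mu$), and on non-survival $\bT_n^*=\varnothing$; this gives $\tilde\mu_0^*(d) = (1-\gamma_0)\mu_0^*(d)$ for $d\ge 1$ and $\tilde\mu_0^*(0)=\gamma_0$.

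To make the ``inhomogeneous branching process'' claim rigorous rather than a computation of marginals, I would set up an explicit induction on $k$ from $0$ to $n$: assuming that $(\bT_n^*)_{\le k}$, the pruned tree truncated at generation $k$, has the law of the first $k$ generations of the inhomogeneous GW process with offspring laws $(\tilde\mu_0^*,\mu_1^*,\dots,\mu_{k-1}^*)$ \emph{and} that conditionally on $(\bT_n^*)_{\le k}$ the families $(\bT_n(v), (h_w)_{w\in\partial\bT_n(v)})_{v\in T_k^*}$ are i.i.d.\ (subtree of depth $n-k$, i.i.d.\ Bernoulli leaves), I perform one pruning step to pass to generation $k+1$, using the branching/independence structure of $\bT_n$ and the independence of the Bernoulli field. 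The per-vertex computation above then yields both that the new offspring counts are i.i.d.\ $\mu_k^*$ across $v\in T_k^*$ and independent of the past, and that the refreshed conditional-independence hypothesis holds at level $k+1$. The main obstacle is purely bookkeeping: carefully stating and propagating this conditional-independence hypothesis (the subtrees below surviving vertices remain i.i.d.\ \emph{and} independent of everything already revealed, including the non-survival of sibling branches, once we condition correctly), since the pruning of $u$'s branch is not independent of $u$'s own subtree --- it is precisely a function of it --- and one must condition on $\{Y_u=1\}$ in the right order to disentangle this. No serious analytic difficulty arises; the content is the clean identification of the conditional law, which is~\eqref{defmuk}.
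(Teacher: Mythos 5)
Your plan is correct in substance, but it takes a genuinely different route from the paper. You verify the branching property \emph{dynamically}: an induction over generations in which you identify the conditional offspring law of each surviving vertex given the pruned tree up to its generation, and propagate a conditional-independence statement for the subtree--field pairs hanging below the surviving vertices. The paper instead avoids conditioning altogether: it computes the unconditional probability $\bP\otimes\PP(\bT_n^*=\bt^*)$ for an arbitrary candidate $\bt^*$, writing the event as requiring each $u\in\bt^*$ to have $d_u^*+\ell_u$ children in $\bT_n$ of which exactly $\ell_u$ are pruned; the two key observations are that the pruning events $\{Y_v=0\}$ are independent (they concern disjoint subtrees), while \emph{all} the survival events $\{Y_v=1\}$ collapse into the single event $\{h_w=+1,\ \forall w\in t_n^*\}$ of probability $(p_n)^{|t_n^*|}$. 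A telescoping manipulation from the leaves upward, using $p_n=1-\gamma_n$ and $|t_{k+1}^*|=\sum_{u\in t_k^*}d_u^*$, then factorizes the resulting expression into $\tilde\mu_0^*(d_\rho^*)\prod_k\prod_{u\in t_k^*}\mu_k^*(d_u^*)$. Both arguments rest on the same per-vertex computation (your zero-truncated thinning formula, which is exactly~\eqref{defmuk}) and the same independence structure; the paper's global computation buys a short, conditioning-free proof, while your inductive scheme buys an explicit description of the conditional law of the subtrees below the pruned tree, which is the natural route to generalizations (e.g.\ the hierarchically correlated Bernoulli fields mentioned in the remark following the proposition).

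One point you must repair when writing this up: as literally stated, your induction hypothesis --- that conditionally on $(\bT_n^*)_{\le k}$ the pairs $\big(\bT_n(v), (h_w)_{w\in\partial\bT_n(v)}\big)_{v\in T_k^*}$ are i.i.d.\ copies of an \emph{unconditioned} depth-$(n-k)$ Galton--Watson tree with i.i.d.\ Bernoulli leaves --- is false. A vertex $v\in T_k^*$ has $Y_v=1$, so its subtree is necessarily conditioned on containing at least one leaf with $h=+1$, an event of probability $1-\gamma_k$; under the hypothesis as stated, some $v\in T_k^*$ would have no such leaf with positive probability, a contradiction. Your per-vertex formula already incorporates the correct conditioning (that is precisely the division by $\PP(Y_u=1)=1-\gamma_k$), and your closing remark about conditioning ``in the right order'' shows you see the issue, but the hypothesis you propagate must explicitly read: given $(\bT_n^*)_{\le k}$, the subtree--field pairs below the vertices of $T_k^*$ are i.i.d.\ with the law \emph{conditioned on survival of their root}. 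With that amendment the induction closes: given a surviving parent, the events $\{Y_v=1\}$ and $\{Y_{v'}=0\}$ for its children are independent across siblings and across parents (they concern disjoint subtrees), so conditioning on the observed survival pattern reproduces the hypothesis at level $k+1$ and yields i.i.d.\ offspring counts with law $\mu_k^*$, independent of the past.
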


\noindent
Let us note that $\mu_k^*$ also depends on $n$, through the parameter $\gamma$ (see Lemma~\ref{defGamma}).
Also, notice that $\tilde \mu_0^*(0) =\gamma_0$ is the probability that the root is pruned (\textit{i.e.}\ the whole tree is pruned), and $\mu_0^*$ is the law~$\tilde \mu_0^*$ conditioned on being non-zero.
Hence, conditionally on the root not being pruned, $\bT^*_n$ is an \emph{inhomogeneous} branching process with offspring distribution~$( \mu_k^*)_{0\leq k\leq n-1}$.

\begin{remark}
After some discussion with Loren Coquille, we realized that Proposition~\ref{prop:pruning} could be generalized to some cases where the Bernoulli random variables $(h_v)_{v\in \partial \bT_n}$ are not i.i.d., but correlated in a ``hierarchical'' manner.
More precisely, let $(\tilde h_v)_{v\in \bT_n}$ be independent Bernoulli random variables (attached to all vertices) with respective parameters $\tilde p_v := \tilde p_{|v|}$, and  define $h_v := \prod_{u\colon u \leq v}\tilde h_u $ for each $v\in \partial \bT_n$. 
Note that the Bernoulli variables $(h_v)_{v\in \partial \bT_n}$ are not independent but their dependence respects the tree structure; note also that we recover i.i.d.\ Bernoulli variables if one sets $\tilde p_k = 1$ for all $k<n$.
Then, for this choice of Bernoulli field $h=(h_v)_{v\in \partial \bT_n}$, one can show that under $\bP\otimes \PP$ the pruned tree $\bT_n^* = \mathrm{Pruned}_{h}(\bT_n)$ is a Branching process with inhomogeneous $n$-dependent offspring distributions; we do not provide a proof here to avoid unnecessary technicalities, but it follows the lines as the proof of Proposition~\ref{prop:pruning}.
\end{remark}

\subsubsection{Preliminary observations}

Before we turn to the proof of Proposition~\ref{prop:pruning}, let us make some comments.
Let $G$ be the generating function for the offspring distribution, that is
\[
G(s) := \bE[s^X] = \sum_{d=1}^{\infty} \mu(d) s^d \qquad s\in [0,1] \,,
\]
where $X$ is a random variable of distribution~$\mu$.
Then, the parameters $\gamma_k$ can easily be expressed iteratively in terms of the generating function $G$.

\begin{lemma}
\label{defGamma}
The sequence $(\bar \gamma_k)_{0\leq k \leq n}$ is characterized by the following iteration:
\[
\bar{\gamma}_0 = 1-p_n \quad \text{ and for } k\in \{ 1, \ldots, n\}\,, \ 
\bar{\gamma}_k  = G(\bar{\gamma}_{k-1}) \,.
\]
Equivalently, we have
\begin{equation}
\label{def:bargammak}
\bar{\gamma}_k = \bE\Big[ (1-p_n)^{|T_k|} \Big] \,.
\end{equation}
\end{lemma}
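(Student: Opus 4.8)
The plan is to prove the two characterizations of $(\bar\gamma_k)_{0\le k\le n}$ separately and then observe they are consistent. Recall that $\bar\gamma_k=\gamma_{n-k}=\bP\otimes\PP(Y_u=0)$ for a generic vertex $u$ at generation $n-k$, i.e. at distance $k$ from the leaves; and $Y_u=0$ means that \emph{no} leaf descendant $v\ge u$ has $h_v=+1$. The subtree $\bt(u)$ rooted at $u$ is, under $\bP$, itself a Galton--Watson tree of depth $k$ with offspring law $\mu$, and conditionally on $\bt(u)$ the field $(h_v)_{v\in\partial\bt(u)}$ consists of $|\partial\bt(u)|$ i.i.d.\ Bernoulli$(p_n)$ variables. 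Writing $\partial\bt(u)$ as the $k$-th generation of this rooted subtree, its size is distributed as $|T_k|$ under $\bP$.

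\medskip
\textbf{Proof of the closed form \eqref{def:bargammak}.} Condition on the subtree $\bt(u)$. Given $\bt(u)$, the event $\{Y_u=0\}$ is exactly the event that all $|\partial\bt(u)|$ i.i.d.\ Bernoulli$(p_n)$ fields on the leaves vanish, which has conditional probability $(1-p_n)^{|\partial\bt(u)|}$. Taking expectations over $\bt(u)$ and using that $|\partial\bt(u)|\stackrel{d}{=}|T_k|$ gives
\[
\bar\gamma_k=\bE\otimes\EE\big[\,\PP(Y_u=0\mid \bt(u))\,\big]=\bE\big[(1-p_n)^{|T_k|}\big].
\]
When $k=0$ the subtree is a single vertex which is itself a leaf, so $|T_0|=1$ and $\bar\gamma_0=1-p_n$, consistent with the stated initialization.

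\medskip
\textbf{Proof of the recursion.} There are two natural routes; I would present the one-line generating-function computation. From \eqref{def:bargammak}, using the standard branching identity $|T_k|=\sum_{j=1}^{X}|T_{k-1}^{(j)}|$ where $X\sim\mu$ is the number of children of the root and $|T_{k-1}^{(j)}|$ are i.i.d.\ copies of $|T_{k-1}|$ independent of $X$, we get
\[
\bar\gamma_k=\bE\big[(1-p_n)^{|T_k|}\big]
=\bE\Big[\textstyle\prod_{j=1}^{X}\bE\big[(1-p_n)^{|T_{k-1}^{(j)}|}\big]\Big]
=\bE\big[\bar\gamma_{k-1}^{\,X}\big]=G(\bar\gamma_{k-1}),
\]
for $k\in\{1,\dots,n\}$, which is the claimed iteration. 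Alternatively one argues directly: $Y_u=0$ iff every child $w$ of $u$ satisfies $Y_w=0$; conditionally on $u$ having $d$ children these events are independent each of probability $\bar\gamma_{k-1}$, so summing $\mu(d)\bar\gamma_{k-1}^{d}$ over $d$ gives $G(\bar\gamma_{k-1})$. Either way, combined with $\bar\gamma_0=1-p_n$, this determines $(\bar\gamma_k)$ uniquely, and the two displayed formulas agree by induction since $\bE[(1-p_n)^{|T_k|}]$ satisfies the same recursion and initial condition.

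\medskip
There is essentially no obstacle here: the only mild point requiring care is the bookkeeping of the index reversal (quantities barred are indexed from the leaves, so the recursion runs ``upward'' from $\bar\gamma_0$ corresponding to the leaf generation $n$ of $\bT_n$), and the measurability fact that conditionally on $\bt(u)$ the leaf fields are i.i.d.\ Bernoulli$(p_n)$ — both are immediate from the definitions in Section~\ref{sec:comparePruned}. I would keep the write-up to the generating-function line plus the remark that $|T_0|=1$ fixes the base case.
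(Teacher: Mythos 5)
Your proof is correct and uses the same ingredients as the paper's: the branching property at the root together with the generating function $G$, plus the observation that the leaf fields are i.i.d.\ Bernoulli$(p_n)$ given the tree. The only (cosmetic) difference is the order of derivation --- the paper first proves the recursion $\bar\gamma_k = G(\bar\gamma_{k-1})$ by decomposing the event $\{Y_u=0\}$ over the children of $u$ (your ``alternative'' argument) and then deduces \eqref{def:bargammak} from the fact that $G^{\circ k}$ is the generating function of $|T_k|$, whereas you establish the closed form first by conditioning on the whole subtree and then recover the recursion from the branching identity for $|T_k|$; both routes are complete and essentially equivalent.
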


\begin{proof}
First of all, we have $\gamma_n =\PP\otimes \bP(Y_v =0)$ for $v$ a leaf, so by definition~\eqref{def:Xv} of $Y_v$, we have $\gamma_n = \PP(h_v =0) =1-p_n$, \textit{i.e.}\ $\bar\gamma_0=1-p_n$.

Now, notice that for a generic $u\in T_k$ with $k<n$,  by definition of~$Y_u$ we have that $Y_u=0$ if and only if $Y_{u'}=0$ for all $u'\leftarrow u$. In other words, for a fixed realization of $\bT$ we have
\[
\PP(Y_u=0) = \prod_{u', u\rightarrow u'}  \PP(Y_{u'}=0),
\]
since the variables $(h_v)$ are independent on the different sub-trees $\bT(u')$, $u'\leftarrow u$.
Taking the expectation with respect to the tree and using the branching property we therefore get that for $k\leq n-1$,
\[
\gamma_k = \EE\big[ (\gamma_{k+1})^X \big] = G(\gamma_{k+1}) \,.
\]
Using that $\bar \gamma_k = \gamma_{n-k}$, this gives the desired iteration.

To deduce the formula~\eqref{def:bargammak} for $\bar\gamma_k$, we notice that $\bar \gamma_k = G^{\circ k}(1-p_n)$, with $G^{\circ k} = G\circ \cdots \circ G$ ($k$ times). 
We obtain the desired formula since $G^{\circ k}$ is the generating function of $|T_k|$.
\end{proof}

We also have some nice interpretation of the distribution $\mu_k^*$.
Conditionally on the tree not being pruned, we know that each vertex $u\in \bT^*$ has at least one descendant in $\partial \bT$ that has not been pruned; in other words, $\mu_k^*$ is supported on $\NN$.
It turns out that for $u\in T^*_k$, the number of children of vertex $u$ in $\bT^*$ can be constructed as follows: take~$X$ a random variable with distribution $\mu$, so $u$ has $X$ descendants in $\bT$; prune these descendants with probability~$\gamma_{k+1}$ independently, but conditionally on having at least one surviving descendant.

\begin{definition} 
\label{def:ZeroTBin}
Let $n \in \NN$ and $p\in (0,1]$. A random variable $\hat{B}$ is said to follow a \textit{zero-truncated binomial} of parameters $n$ and $p$, and we write $\hat{B} \sim \widehat{\Bin}(n,p)$ if
\[
\P (\hat{B} = k) = \dfrac{1}{1-(1-p)^n} \binom{n}{k} p^k (1-p)^{n-k} \qquad k\in \{1,\ldots, n\}.
\]
Put otherwise, we have $\P(\hat{B} =k) = \PP(B=k \mid B > 0)$ with $B\sim \Bin(n,p)$.
\end{definition}

For a random variable $X$, we also write $W \sim \Bin(X,p)$ if 
$W \sim \Bin(d,p)$ conditionally on $X=d$.
A similar notation holds for $W \sim \widehat{\Bin}(X,p)$.

\begin{lemma}
\label{ZeroTBin}
For all $k\in \{0,\ldots, n-1\}$, we have that $\mu_k^* = \widehat{\Bin}(X, 1-\gamma_{k+1})$ where $X$ is a random variable of distribution $\mu$.
\end{lemma}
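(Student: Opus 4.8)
The plan is to show that the formula~\eqref{defmuk} for $\mu_k^*(d)$ is exactly the probability mass function of $\widehat{\Bin}(X,1-\gamma_{k+1})$. First I would unfold the definition of the zero-truncated binomial: if $B \sim \Bin(X, 1-\gamma_{k+1})$ (meaning $\Bin(d, 1-\gamma_{k+1})$ conditionally on $X = d$), then by conditioning on the value of $X$,
\[
\PP(B = d) = \sum_{m \geq d} \mu(m) \binom{m}{d} (1-\gamma_{k+1})^d (\gamma_{k+1})^{m-d} \,,
\]
and reindexing the sum with $\ell = m - d$ turns this into $\sum_{\ell \geq 0} \mu(d+\ell) \binom{d+\ell}{d} (1-\gamma_{k+1})^d (\gamma_{k+1})^\ell$, which matches the numerator in~\eqref{defmuk} since $\binom{d+\ell}{d} = \binom{d+\ell}{\ell}$.

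Next I would identify the normalizing constant. Conditioning on $B > 0$ means dividing by $\PP(B > 0) = 1 - \PP(B = 0)$, and $\PP(B = 0) = \sum_{m} \mu(m) (\gamma_{k+1})^m = \EE[(\gamma_{k+1})^X] = G(\gamma_{k+1})$. By the iteration in Lemma~\ref{defGamma} (in its non-barred form, $\gamma_k = G(\gamma_{k+1})$ for $k \leq n-1$, obtained in the proof of that lemma), this equals $\gamma_k$. Hence $\PP(B > 0) = 1 - \gamma_k$, so that $\PP(B = d \mid B > 0) = \frac{1}{1-\gamma_k}\sum_{\ell \geq 0}\mu(d+\ell)\binom{d+\ell}{\ell}(\gamma_{k+1})^\ell (1-\gamma_{k+1})^d = \mu_k^*(d)$, which is precisely the asserted identity $\mu_k^* = \widehat{\Bin}(X, 1-\gamma_{k+1})$.

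The only point that requires a little care is the probabilistic interpretation tying this algebraic identity back to the pruned tree, i.e. justifying that for $u \in T_k^*$ the number of children of $u$ in $\bT^*$ is distributed as $\widehat{\Bin}(X, 1-\gamma_{k+1})$ with $X \sim \mu$: a vertex $u$ has $X$ children in $\bT$, each child survives the pruning independently with probability $1-\gamma_{k+1}$, and conditioning on $u \in T_k^*$ (i.e. $Y_u = +1$) is exactly conditioning on at least one child surviving. This independence-across-subtrees fact is already established in the proof of Lemma~\ref{defGamma} and is formalized in Proposition~\ref{prop:pruning}; the present lemma is essentially a restatement of the content of~\eqref{defmuk} in the language of Definition~\ref{def:ZeroTBin}, so no genuine obstacle arises here — the proof is a short computation together with a citation of the iteration $\gamma_k = G(\gamma_{k+1})$.
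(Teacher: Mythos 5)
Your proof is correct and follows essentially the same route as the paper: condition $\Bin(X,1-\gamma_{k+1})$ on the value of $X$ to recover the sum in~\eqref{defmuk}, and identify the normalizer via $\P(B=0)=G(\gamma_{k+1})=\gamma_k$ from Lemma~\ref{defGamma}. The closing remark about the pruned-tree interpretation is not needed for the lemma itself, as you note, and the paper likewise omits it here.
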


\begin{proof}
For $k\in \{0,\ldots, n-1\}$, let $\tilde X_k \sim\widehat{\Bin}(X, 1-\gamma_{k+1}) $.
Then, for $z\in \NN$, we have
\[
\P(\tilde X_k =z) = \P( B_k = z \mid B_k >0 ) = \dfrac{\P (B_k= z)}{1-\P( B_k =0) } \,,
\]
where $B_k \sim \Bin(X,1-\gamma_{k+1})$. 
Then, notice that
\[
\P(B_k = 0) = 
  \sum_{d =1}^{\infty} \mu(d)  (\gamma_{k+1})^d = \bE\big[ (\gamma_{k+1})^X \big]  =\gamma_k \,,
\]
the last inequality deriving from Lemma~\ref{defGamma}.
Therefore, we end up with
\[
\P(\tilde X_k =z) = \dfrac{1}{1-\gamma_{k}} \P (B_k = z) = \dfrac{1}{1-\gamma_{k}}\sum_{d =1}^{\infty} \mu(d) \binom{d}{z} (\gamma_{k+1})^{d-z} (1-\gamma_{k+1})^z = \mu_k^*(z),
\]
which concludes the proof.
\end{proof}

\subsubsection{Proof of Proposition~\ref{prop:pruning}}

Our goal is to write $\bP\otimes\PP ( \mathbf{T}^*_n = \bt^* )$ in terms of the offspring distributions $(\mu_k^*)_{0 \leqslant k \leqslant n-1}$. 
The approach for doing this is to consider all the possible trees $\mathbf{t}$ (sampled from $\mathbf{T}$) such that after pruning we may obtain $\bt^*$. 

First of all, notice that we clearly have $\PP\otimes \bP(\bT^*_n =\emptyset) = \gamma_0$. Indeed, conditionally on the tree $\bT_n$, the probability that the whole tree is pruned is $\PP( h_v =0 \, \forall\, v\in T_n) = (1-p_n)^{|T_n|}$, so $\PP\otimes \bP(\bT^*_n =\emptyset) =\bE[(1-p_n)^{|T_n|}] =\gamma_0$, see Lemma~\ref{defGamma}.

We now work with $\bt^* \neq \emptyset$ of depth $n$ and we write
\begin{equation*}
\PP\otimes \bP (\bT^*_n=\bt^* ) = \sum_{\mathbf{t} \in \BP_n(\mu)} \PP\otimes \bP\big(\mathbf{T} = \mathbf{t},  \mathrm{Pruned}_h(\mathbf{t}) = \bt^* \big)\,.
\end{equation*}
Here, $\BP_n(\mu)$ is the set of all trees $\mathbf{t}$ of depth $n$ generated by a Branching Process of offspring distribution $\mu$.

\smallskip
\noindent
{\it Preliminary calculation: offspring distribution.}
For $u\in \bT_n$, we denote $X_u^*$ the number of descendants of $u$ inside the pruned tree $\bT^*_n$.
Notice that the number of successor of the vertex $u$ on the tree $\mathbf{T}_n$ (\textit{i.e.}\ before pruning), that we denote $X_u$, has to verify $X_u \geqslant X_u^*$.
We have
\[
\{ X^{*}_u=d \} = \bigcup_{\ell \geqslant 0} \bigcup_{ V\subset S(u) , |V|=\ell} \big\{ X_u=d+\ell,  Y_{v}=0 \text{ for } v\in V,\  Y_{v}=1  \text{ for } v \in S(u)\setminus V \big\} \,,
\]
meaning that the vertex $u$ has $d+\ell$ successors (on $\mathbf{T}$) for some $\ell \geq 0$, and exactly $\ell$ of them get pruned.

Using this representation, we can compute the offspring distribution of a vertex~$u$ at generation~$k$.
Using that vertices $v\leftarrow u$ in generation $k+1$ have a probability~$\gamma_{k+1}$ of being pruned and that the events $\{Y_v=0\}_{v\leftarrow v}$ are independent (they depend on Bernoulli variables $(h_v)$ in different sub-trees), we get for $d\geq 0$
\begin{equation}
\label{CompPdv}
\PP \otimes \bP (X^*_u = d )	= \sum_{\ell \geqslant 0} \mu (d+\ell) \binom{d+\ell}{\ell} (\gamma_{k+1})^{\ell} (1-\gamma_{k+1})^{d} \,.
\end{equation}
Notice that 
\eqref{CompPdv} generalizes the iteration $\gamma_k := \PP \otimes \bP (Y_u = 0) = G(\gamma_{k+1})$ of Lemma~\ref{defGamma}.

Now, notice that conditionally on $\bT_n^*\neq \emptyset$, a vertex $u\in \bT^*_n$ has at least one descendant in~$\bT^*_n$.
Note that we have 
$\bP\otimes \PP (X_u^* >0 ) = \bP\otimes \PP(Y_u = 1) =1- \gamma_{|u|}$, since $u$ has at least one descendant in the pruned tree if and only if it survives the pruning, \textit{i.e.} $Y_u=1$. 
We therefore get the offspring distribution of a vertex $u\in T^*_k$ in $\bT^*_n$ (conditioned on having $\bT_n^* \neq \emptyset$):
\begin{equation*}
\bP\otimes \PP (X^*_u = d \mid X_u^*> 0) = \dfrac{1}{1-\gamma_k{k}} \sum_{\ell \geqslant 0} \mu (d+\ell) \binom{d+\ell}{\ell} (\gamma_{k+1})^\ell (1-\gamma_{k+1})^d = \mu_k^*(d) \,.
\end{equation*}
It remains to show that the number of descendants in the different generations are independent and that $(X_v^*)_{v \in T_k}$ are also independent.

\smallskip
\noindent
{\it Main calculation: probability of having a given pruned tree.}
Let $\bt^*$ be a non-empty tree of depth $n$ which is a possible candidate for being a pruned version of a Galton--Watson tree $\bT$.
We let $d_u^*$ denote the number of descendants of $u\in \bt^*$; recall that each vertex $u \in \bt^*$ (in generation $k<n$) has to be such has it has at least one descendant, that is $d_u^*\geq 1$.

In order to have $\bT^*_n =\bt^*$, one must have $\bt^*$ as a \textit{squeleton} for $\bT$, see Figure~\ref{fig:pruning}.
Then, similarly as above, we can write the event $\{\bT^*_n =\bt^*\}$ as
\[
 \bigcap_{k=0}^{n-1} \bigcap_{u\in t_k^*} 
\bigcup_{\ell_u \geq 0} \bigcup_{V\subset S(u), |V|=\ell_u}  \big\{ X_u=d_u^*+\ell_u,  Y_{v}=0 \text{ for } v\in V,\  Y_{v}=1  \text{ for } v \in S(u)\setminus V \big\} \,,
\]
meaning that each vertex $u \in \bt^*$ must have $d_u^*+\ell_u$ successors in $\bT$ for some $\ell_u \geq 0$, with exactly $\ell_u$ of them getting pruned; the descendants of $u$ inside $\bt^*$ are the $v$ with $Y_v=1$.

Now, note that the events $\{Y_v =0\}$ in the above are all independent because they depend on different sub-trees of $\bT$ (that lead to leaves with only $h\equiv 0$); recall that $\bP\otimes\PP(Y_v=0) = \gamma_{k+1}$ for $v \in T_k$.
On the other hand, the events $\{Y_v=1\}$ are not independent, since all vertices $v \in \bT$ such that $Y_v=1$ are ancestors of leaves with $h_{\cdot}=+1$.
In particular, having $Y_w=1$ for a leaf $w\in \partial \bT_n$ implies that $Y_v=1$ for all ancestors $v$ of $w$, see Figure~\ref{fig:pruning}.
However, all the events $\{Y_v=1\}$ can be grouped into a simpler event $\{h_w=+1 \,, \forall\, w\in t_n^*\}$, which is independent of all events $\{Y_v=0\}$; note that $\bP\otimes\PP(h_w=1 \,, \forall\, w\in t_n^*) = (p_n)^{|t_n^*|}$.

All together, referring to Figure~\ref{fig:pruning} for an illustration of the computation, and using also that the $(X_v)_{v\in \bT}$ are independent with distribution $\mu$, we obtain that
\begin{equation}
\label{computePt*}
\PP\otimes \bP \big( \bT^*_n=\mathbf{t}^* \big)  = \prod_{k=0}^{n-1} \prod_{u \in t_k^*} \sum_{\ell_u \geqslant 0}  \mu(d_u^*+\ell_u) \binom{d_u^*+\ell_u}{\ell_u} (\gamma_{n})^{\ell_u} \times (p_n)^{|t_n^*|}  \,.
\end{equation}

\begin{figure}[tbp]
\centering
\includegraphics[scale=0.6]{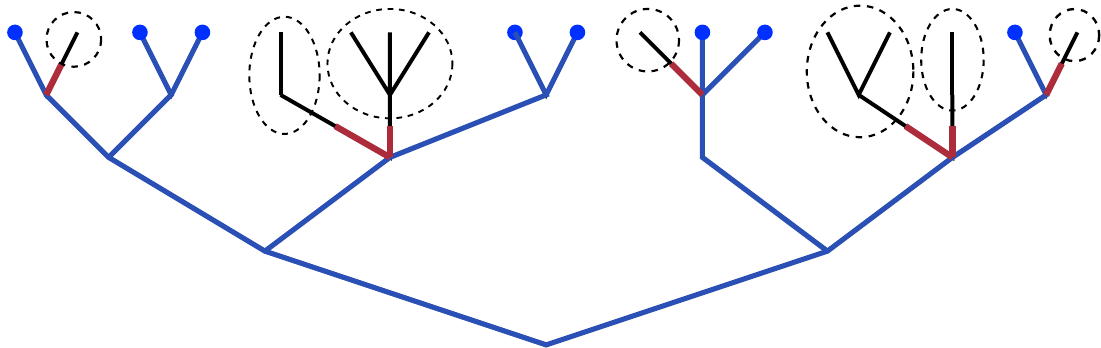}
\caption{\small Illustration of the computation in~\eqref{computePt*}: the tree $\bt^*$ is represented in thick (blue) lines and the tree~$\bT$ whose pruned version is $\bt^*$ is represented in lighter (gray) lines.
For every $h_w=1$ in the leaves (represented by a blue dot), all the ancestors $v$ of $w$ automatically have $Y_v=1$: this contributes to~\eqref{computePt*} by a factor $(p_n)^{|t_n^*|}$.
A branch that is pruned just above generation $k$ contributes to the probability by a factor $\gamma_{k+1}$, and all these pruning events (represented by a red segment) are independent since they depend on distinct subtrees of $\bT$ (the corresponding subtrees are circled with dashed lines).
}
\label{fig:pruning}
\end{figure}

\noindent
We can now reformulate~\eqref{computePt*}.
Notice that $p_n=1-\gamma_n$, and that $|t_n^*| = \sum_{u\in t_{n-1}^*} d_u^*$: we therefore get that
$(p_n)^{|t_n^*|} \prod_{u \in t_{n-1}^*} \sum_{\ell_u \geqslant 0}  \mu(d_u^*+\ell_u) \binom{d_u^*+\ell_u}{\ell_u} (\gamma_{n})^{\ell_u}$ is equal to
\[
\prod_{u \in t_{n-1}^*} \sum_{\ell_u \geqslant 0}  \mu(d_u^*+\ell_u) \binom{d_u^*+\ell_u}{\ell_u} (\gamma_{n})^{\ell_u} (1-\gamma_n)^{d_u^*}
= (1-\gamma_{n-1})^{|t_{n-1}^*|} 
\prod_{u \in t_{n-1}^*}  \mu_{n-1}^*(d_u^*) \,,
\]
where we have used the definition~\eqref{defmuk} of $\mu_{k}^*$ to get that $\sum_{\ell \geqslant 0}  \mu(d+\ell) \binom{d+\ell}{\ell} (\gamma_{n})^{\ell} (1-\gamma_n)^{d} = (1-\gamma_{n-1}) \mu_{n-1}^*(d)$.
Similarly, we get that
\[
(1-\gamma_{n-1})^{|t_{n-1}^*|} \prod_{u \in t_{n-2}^*} \sum_{\ell_u \geqslant 0}  \mu(d_u^*+\ell_u) \binom{d_u^*+\ell_u}{\ell_u} (\gamma_{n-1})^{\ell_u} = (1-\gamma_{n-2})^{|t_{n-2}^*|} 
\prod_{u \in t_{n-2}^*}  \mu_{n-2}^*(d_u^*).
\]
Therefore, iterating, we finally obtain from~\eqref{computePt*} that for any non-empty $\bt^*$,
\[
\PP\otimes \bP (\bT^*_n=\mathbf{t}^*  )  = (1-\gamma_0)  \prod_{k=0}^{n-1} \prod_{u \in t_k^*}  \mu_{k}^*(d_u^*)  \,.
\]
This concludes the proof since this gives that for any $\bt^*$
\[
\PP\otimes \bP (\bT^*_n=\mathbf{t}^* )  = \tilde \mu_0^*(d_\rho^*) \prod_{k=1}^{n-1} \prod_{u \in t_k^*}  \mu_{k}^*(d_u^*)  =: \bP^*(\bT^*_n=\bt^*) \,,
\]
where $\bP^*$ is the law of some inhomogeneous branching process of depth $n$, with offspring distribution $(\tilde \mu^*_0, \mu^*_1,\ldots, \mu^*_n)$. \qed


\subsection{A sharp transition for the parameters $(\gamma_k)_{0\leq k\leq n}$}
\label{sec:parameters}

Recall that $X$ denotes a random variable with law $\mu$ and that $G(s) := \E[s^X]$ denotes its generating function.
In all this section, we assume that $\mu(0)=0$, $\mu(1)<1$ and $\mu$ admits a finite moment of order $q\in (1,2]$, that we denote $m_q := \E[X^q] <\infty$.
We also let
\[
d_0 :=\min\big\{ d\geq 1, \mu(d)>0 \big\} \qquad (d_0\geq 1) \,.
\]

Recall that $\gamma_k$ is defined in~\eqref{GammaPruningProb} as the probability that a vertex at generation $k$ is pruned, and also that we have set $\bar \gamma_k = \gamma_{n-k}$.
The main result of this section is that the parameters $(\gamma_k)_{0\leq k\leq n}$ exhibit a (sharp) phase transition.
Let us define:
\begin{equation}
\label{defk*}
k^* = k^*(p_n) := \log_{\nu} \big( p_n \nu^n \big)\,,\quad \text{ and } \quad \bar k^*= n - k^* = \log_{\nu}\Big(\frac{1}{p_n}\Big) \,,
\end{equation}
where $\log_\nu(x) = \frac{1}{\log \nu} \log x$.
In the following, we omit the integer part to simplify notation and often treat $k^*,\bar{k}^*$ as integers (in practice, one should round above or below depending on whether we are after lower or upper bounds).
Let us note that $k^*\to \infty$ as soon as $p_n \nu^n \to \infty$ and that $\bar k^* \to\infty$ as soon as $p_n\to 0$.

\begin{proposition} 
\label{PhaseTransGammas} 
Assume that $\mu$ admits a finite moment of order $q\in (1,2]$.
There are constants $c_1 \in (0,1)$ and $ 0 < c_2 \leqslant c_3$ (that depend only on the distribution $\mu$) such that:
\[
\text{for all } k \geq k^*  \qquad
c_1 \, \nu^{-(k-k^*)}  \leqslant  1-\gamma_k \leqslant  \nu^{-(k-k^*)}  \,,
\]
and, depending on whether $d_0=1$ (\textit{i.e.} $\mu(1)\in(0,1)$) or $d_0\geq 2$,
\[
\text{for all } k \leq k^*  \qquad
\left\{ \begin{split}
\text{if } d_0=1\,, \quad & \quad\ \   c_2 \mu(1)^{k^*-k} \leqslant  \gamma_k \leqslant c_3 \mu(1)^{k^*-k} \,,\\
\text{if } d_0\geq 2\,, \quad & \quad\  e^{- c_3 (d_0)^{k^*-k}} \leqslant  \gamma_k \leqslant e^{- c_2 (d_0)^{k^*-k} }\, .
\end{split}\right.
\]
\end{proposition}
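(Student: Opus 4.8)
The whole analysis rests on the recursion $\bar\gamma_k = G(\bar\gamma_{k-1})$ with $\bar\gamma_0 = 1-p_n$ from Lemma~\ref{defGamma}, i.e.\ the sequence $(\bar\gamma_k)$ is the orbit of $1-p_n$ under the generating function $G$. Since $G$ is increasing and convex on $[0,1]$ with $G(1)=1$, $G'(1)=\nu>1$, and (by $\mu(0)=0$) $G(0)=0$, the map $G$ has exactly the two fixed points $0$ and $1$ on $[0,1]$, with $1$ repelling and $0$ attracting. Starting from $1-p_n$ close to $1$, the orbit first \emph{escapes} from the neighbourhood of $1$ (this is the regime $k\le k^*$, measured from the leaves, corresponding to generations $k\le k^*$ from the root after the bar-flip), then \emph{converges} to $0$ (the regime $k\ge k^*$). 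The quantity $k^*$ is exactly the number of iterations needed to leave a fixed neighbourhood of $1$: near $1$ one linearizes $1-G(1-\epsilon)\approx \nu\epsilon$, so $p_n = 1-\bar\gamma_0$ is multiplied by roughly $\nu$ at each step, and it takes $\log_\nu(1/p_n) = \bar k^*$ steps to reach order $1$; translating back, $k^* = n-\bar k^*= \log_\nu(p_n\nu^n)$.

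Concretely I would split the argument into the two regimes. \emph{(i) The regime near $1$ (small $k$, i.e.\ $k\le k^*$).} Write $\epsilon_k := 1-\bar\gamma_{n-k} = 1-\gamma_k$ for $k$ from $n$ down; actually it is cleaner to keep the bar-indexing: set $u_j := 1-\bar\gamma_j$, so $u_0 = p_n$ and $u_j = 1-G(1-u_{j-1})$. Using convexity of $G$ one gets the two-sided bound $1-G(1-x) \le \nu x$ for all $x\in[0,1]$, and $1-G(1-x)\ge \nu x - Cx^{1+\delta}$ for $x$ small, where the error exponent $1+\delta$ comes from the $q$-moment assumption ($\delta = q-1\in(0,1]$ and the bound $|G(1-x)-1+\nu x|\le C_q x^q$ follows from $m_q<\infty$ by a standard interpolation/Taylor-with-remainder estimate on $s\mapsto G(s)$ near $s=1$). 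From the upper recursion $u_j\le \nu u_j\Rightarrow u_j\le \nu^j p_n$, valid up to the generation where $u_j$ is still $\le 1$, i.e.\ up to $j\approx \bar k^*$. For the matching lower bound one iterates $u_j \ge \nu u_{j-1}(1 - C u_{j-1}^{\delta})$ and controls the multiplicative error $\prod_j (1-Cu_{j-1}^\delta)$: since the $u_{j-1}$ grow geometrically the exponents $u_{j-1}^\delta$ form a summable geometric series, so the product stays bounded below by a positive constant $c_1$, giving $u_j \ge c_1\nu^j p_n$. Translating $j = n-k$ and $\nu^{n}p_n = \nu^{k^*}$ yields $c_1\nu^{-(k-k^*)}\le 1-\gamma_k\le \nu^{-(k-k^*)}$ for $k\ge k^*$ — wait, the indices flip: the ``near $1$'' regime in bar-coordinates ($j$ small) is the ``$k$ large'' regime in root-coordinates, so this is exactly the first display of the Proposition, $k\ge k^*$. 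Good; this is the easier half and needs only convexity plus the $q$-moment control of the remainder.

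\emph{(ii) The regime near $0$ (large $j$ in bar-coordinates, i.e.\ small generation $k\le k^*$ from the root).} Here $\bar\gamma_j\to 1$, equivalently $\gamma_k\to\ldots$ hmm — let me restate: for $k\le k^*$ we are looking at $\gamma_k = \bar\gamma_{n-k}$ with $n-k\ge \bar k^*$, so $\bar\gamma$ is in the regime where it has escaped the neighbourhood of $1$ and is converging towards $0$; thus $\gamma_k$ is \emph{small}, converging to $0$ as $k^*-k\to\infty$. The speed of convergence of the $G$-orbit to the attracting fixed point $0$ depends on the local behaviour of $G$ at $0$: if $d_0 = \min\{d:\mu(d)>0\}$, then $G(s)\sim \mu(d_0)s^{d_0}$ as $s\downarrow 0$. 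For $d_0=1$ this is a linear/geometric rate: $G(s)\approx \mu(1)s$, so $\gamma_k$ decays like $\mu(1)^{k^*-k}$, and one gets two-sided bounds $c_2\mu(1)^{k^*-k}\le \gamma_k\le c_3\mu(1)^{k^*-k}$ by comparing $G(s)$ with $\mu(1)s(1\pm o(1))$ and again summing a geometric error. For $d_0\ge 2$ the iteration $s\mapsto \mu(d_0)s^{d_0}(1+o(1))$ is super-exponentially contracting: taking $\log$, $-\log\gamma_k$ roughly multiplies by $d_0$ at each step once we are in the small regime, giving the doubly-exponential bounds $e^{-c_3 d_0^{k^*-k}}\le\gamma_k\le e^{-c_2 d_0^{k^*-k}}$. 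The two constants differ because the multiplicative $(1+o(1))$ errors accumulate into an additive $O(1)$ shift of $\log(-\log\gamma_k)$, hence a multiplicative constant in the exponent of the doubly-exponential.

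The \textbf{main obstacle} is the bookkeeping at the ``matching'' scale $k\approx k^*$, i.e.\ showing that the two asymptotic descriptions (escape from $1$, contraction to $0$) can be glued with only universal constants, uniformly in $n$ and in the sequence $(p_n)$. Precisely: one must show that after exactly $\bar k^* = \log_\nu(1/p_n)$ iterations from $1-p_n$ the orbit has reached some fixed level $\eta\in(0,1)$ — bounded away from both $0$ and $1$ by constants depending only on $\mu$ — and that the number of further iterations to go from $\eta$ down to any given small threshold is $O(1)$. This requires a quantitative ``crossing the bulk'' lemma: on any compact subinterval $[\eta_0,\eta_1]\subset(0,1)$ the map $G$ satisfies $G(s)\le s - c$ for a constant $c = c(\eta_0,\eta_1,\mu)>0$, so the orbit traverses the bulk in a bounded number of steps; combined with (i) the upper bound $u_j\le \nu^j p_n$ and the lower bound $u_j\ge c_1\nu^j p_n$ pin down the exit time from the neighbourhood of $1$ to within an additive $O(1)$, and symmetrically for entry into the neighbourhood of $0$. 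The resulting $O(1)$ slack is exactly what produces the constants $c_1,c_2,c_3$ (and the gap $c_2\le c_3$). Once this gluing lemma is in place, the rest is the routine geometric-series and Taylor-remainder estimates sketched above.
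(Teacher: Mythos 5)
Your proposal is correct and follows essentially the same route as the paper: iterate $\bar\gamma_{k}=G(\bar\gamma_{k-1})$ from $\bar\gamma_0=1-p_n$, control the escape from the repelling fixed point $1$ via the $q$-moment bounds $\nu t(1-C_\mu t^{q-1})\le 1-G(1-t)\le \nu t$ (giving $1-\bar\gamma_j\asymp \nu^j p_n$ up to $\bar k^*$ with an $O(1)$ gluing), and then use the local behaviour of $G$ at the attracting fixed point $0$ (linear if $d_0=1$, of order $s^{d_0}$ if $d_0\ge 2$) to get the geometric, respectively doubly exponential, decay of $\gamma_k$ for $k\le k^*$. The only cosmetic difference is in the gluing step: the paper introduces the first index $\bar k_1^*$ at which the cumulative error $\sum_i C_\mu(1-\bar\gamma_i)^{q-1}$ exceeds $1/2$, shows $\bar k_1^*=\bar k^*+O(1)$ and then uses monotonicity of $(1-\bar\gamma_k)_k$, whereas you propose a ``crossing the bulk'' bound $G(s)\le s-c$ on compact subsets of $(0,1)$ — both yield the same $O(1)$ slack and hence the constants $c_1,c_2,c_3$.
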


\noindent
The lower bounds in the above are not particularly relevant to our purpose (they however have some technical use in the proofs), so let us write more compact upper bounds that we will use repeatedly in the proof (whether $d_0=1$ or $d_0\geq 2$). There is a constant $c_4>0$ such that 
\begin{equation}
\label{eq:boundsparameters}
\gamma_k \leqslant e^{-c_4(k^*-k)}  \quad \text{for } k \leqslant k^*
\qquad \text{ and } \qquad
1-\gamma_k \leqslant \nu^{-(k-k^*)}   \quad   \text{for } k \geqslant k^*\, .
\end{equation}

The interpretation of Proposition~\ref{PhaseTransGammas} (or of~\eqref{eq:boundsparameters}) is as follows. 
On one hand, when $k$ is much smaller than $k^*$ in the sense that $k^*-k\gg 1$, then $\gamma_k$ is very small, meaning that vertices have an exponentially small chance of being pruned.
On the other hand, when $k$ is much larger than $k^*$ in the sense that $k-k^*\gg 1$, then $1-\gamma_k$ is very small (or $\gamma_k$ is close to $1$), meaning that vertices are very likely being pruned.
The transition is \emph{sharp}, in the sense that it occurs in a window of size $O(1)$.

One may therefore have the following picture in mind for the pruned tree $\bT_n^*$: for $k\leq k^*$, $\bT_n^*$ resembles the original Galton--Watson tree, while for $k\geq k^*$, $\bT_n^*$ has thin long branches (\textit{i.e.}\ with only one descendant per vertex) recalling that the vertices of the pruned tree are conditioned on having at least one descendant.
This picture is made more formal in Proposition~\ref{prop:d_TV} below, see also Figure~\ref{fig:BeforeAfterPruning} on page~\pageref{fig:BeforeAfterPruning} for an illustration.

\begin{remark}
In the case where $p_n=\alpha^n$ with $\alpha \in (0,1)$, then we have that $\bar k^* =  \eta_{\alpha} n$ with $\eta_{\alpha} :=\log_{\nu}(\frac1\alpha)\in (0,1)$. This shows that the transition occurs sharply around generation $k^* = (1-\eta_\alpha) n$.
\end{remark}

\subsubsection{Some preliminaries}

A first observation is that, from the iterative definition of $\bar \gamma_k$ (see Lemma~\ref{defGamma}) and since~$G$ is convex and $G(1)=1$, we get that $\bar\gamma_{k+1} = G(\bar \gamma_k) \leq \bar \gamma_k$.
We therefore have the following.
\begin{lemma}
\label{MonoGamma}
The sequence $(\bar \gamma_k)_{0\leqslant k \leqslant n}$ is non-increasing; equivalently, $(\gamma_k)_{0\leqslant k \leqslant n}$ is non-decreasing.
\end{lemma}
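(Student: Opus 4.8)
The plan is to deduce this directly from the iterative characterization of $(\bar\gamma_k)_{0\le k\le n}$ given in Lemma~\ref{defGamma}, namely $\bar\gamma_0 = 1-p_n$ and $\bar\gamma_k = G(\bar\gamma_{k-1})$ for $1\le k\le n$, together with the elementary fact that the generating function $G$ lies below the diagonal on $[0,1]$. First I would record why $G(s)\le s$ for all $s\in[0,1]$: since $\mu(0)=0$ (Assumption~\ref{hyp:branching}) we have $G(0)=0$, while $G(1)=1$ because $\mu$ is a probability distribution, and $G$ is convex on $[0,1]$ as a power series with non-negative coefficients. Hence the graph of $G$ stays below the chord joining $(0,0)$ and $(1,1)$, which is precisely the line $y=s$; this gives $G(s)\le s$ on $[0,1]$.

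Given this, the argument is immediate: since every $\bar\gamma_{k-1}$ lies in $[0,1]$, applying the bound $G(s)\le s$ with $s=\bar\gamma_{k-1}$ yields $\bar\gamma_k = G(\bar\gamma_{k-1}) \le \bar\gamma_{k-1}$ for each $1\le k\le n$, so $(\bar\gamma_k)_{0\le k\le n}$ is non-increasing. Recalling the convention $\gamma_k = \bar\gamma_{n-k}$ then gives that $(\gamma_k)_{0\le k\le n}$ is non-decreasing, which is the claim.

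For completeness I would also mention the alternative, more probabilistic, route via the representation $\bar\gamma_k = \bE\big[(1-p_n)^{|T_k|}\big]$ from Lemma~\ref{defGamma}-\eqref{def:bargammak}: because $\mu(0)=0$, every vertex has at least one successor, so $|T_{k+1}| \ge |T_k|$ almost surely, and since $x\mapsto (1-p_n)^x$ is non-increasing on $[0,\infty)$, taking expectations gives $\bar\gamma_{k+1}\le\bar\gamma_k$. There is no genuine obstacle in this lemma; the only point worth flagging is that both arguments rely crucially on Assumption~\ref{hyp:branching} (through $\mu(0)=0$), which is exactly what makes $G(0)=0$ hold in the first argument and $(|T_k|)_k$ monotone in the second.
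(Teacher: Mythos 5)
Your proposal is correct and follows essentially the same route as the paper, which deduces $\bar\gamma_{k+1}=G(\bar\gamma_k)\leq\bar\gamma_k$ from the iteration of Lemma~\ref{defGamma} together with convexity of $G$ and $G(1)=1$ (implicitly also $G(0)=0$ from $\mu(0)=0$, exactly the chord argument you spell out). Your alternative probabilistic argument via $\bar\gamma_k=\bE[(1-p_n)^{|T_k|}]$ is a nice extra check but not needed.
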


In order to study the iterative relation $\bar\gamma_{k+1} = G(\bar \gamma_k)$, let us state the following useful lemma, whose proof is elementary (we include it in Appendix~\ref{app:proofs} for completeness).

\begin{lemma}
\label{lem:BoundsG} 
Assume that $X\sim \mu$ admits a finite moment of order $q\in (1,2]$, and denote $\nu = \E[X]$, $m_q =\E[X^q]$. Then we have the following bounds on $G$: for all  $s\in [0,1]$,
\begin{align}
1 + \nu (s-1) \leqslant &\, G(s) \leqslant 1 + \nu (s-1) + c_q m_q  (s-1)^q,  \label{BoundGin1} \\
\mu(d_0) s^{d_0} \leqslant &\, G(s) \leqslant \mu(d_0) s^{d_0} + s^{d_0+1} , \label{BoundGin0}
\end{align}
where $c_q$ is a universal constant that depends only on $q$.
\end{lemma}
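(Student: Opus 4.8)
The two inequalities are elementary facts about the generating function $G(s) = \sum_{d \geq d_0} \mu(d) s^d$ on $[0,1]$, so the plan is to prove them by direct Taylor-type expansions, treating the lower and upper bounds separately.

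For \eqref{BoundGin1}, the plan is to expand around $s=1$. Writing $t = 1-s \in [0,1]$, convexity of $G$ together with $G(1)=1$, $G'(1) = \nu$ gives immediately the lower bound $G(s) \geq 1 + \nu(s-1)$ (the tangent line at $1$ lies below a convex function). For the upper bound I would use the elementary inequality $s^d \leq 1 - d(1-s) + \binom{d}{2}\,'' $ --- more precisely, one needs a clean bound of the form $s^d - 1 + d(1-s) \leq C_q\, d^q\, (1-s)^q$ valid for all $d \geq 1$, $s \in [0,1]$, $q \in (1,2]$. This is the ``one-variable lemma'' that does the real work: it follows from the fact that $\varphi(t) := (1-t)^d - 1 + dt$ is nonnegative, vanishes to second order at $t=0$, and is bounded by $\min\{dt + \tfrac12 d^2 t^2,\ 2\}$, and interpolating the two regimes $d^2 t^2 \lessgtr 1$ gives a bound $\leq C_q (dt)^q$ (since $(dt)^q$ dominates $(dt)^2$ when $dt \leq 1$ and dominates the constant $2$ up to a constant when $dt \geq 1$). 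Summing this against $\mu(d)$ and using $\sum_d d^q \mu(d) = m_q$ yields $G(s) \leq 1 + \nu(s-1) + c_q m_q (1-s)^q$, which is exactly the claimed inequality once one notes $(1-s)^q = (s-1)^q$ for the relevant even-power interpretation (here $q$ is applied to the nonnegative quantity $1-s$, which is how the statement should be read).

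For \eqref{BoundGin0}, the plan is to factor out $s^{d_0}$: write $G(s) = s^{d_0}\big( \mu(d_0) + \sum_{d > d_0} \mu(d) s^{d - d_0} \big)$. The lower bound $G(s) \geq \mu(d_0) s^{d_0}$ is then immediate since all remaining terms are nonnegative. For the upper bound, bound $s^{d-d_0} \leq s$ for all $d \geq d_0 + 1$ and $s \in [0,1]$, giving $\sum_{d > d_0} \mu(d) s^{d-d_0} \leq s \sum_{d > d_0} \mu(d) \leq s$; multiplying back by $s^{d_0}$ gives $G(s) \leq \mu(d_0) s^{d_0} + s^{d_0 + 1}$.

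The only genuinely non-routine ingredient is the elementary one-variable estimate $(1-t)^d - 1 + dt \leq C_q (dt)^q$ for $q \in (1,2]$, which controls the remainder uniformly in $d$; I would isolate it as a sublemma (or cite the appendix, since the paper already announces that the proof of Lemma~\ref{lem:BoundsG} is deferred to Appendix~\ref{app:proofs}). Everything else is convexity of $G$, the normalizations $G(1) = 1$, $G'(1) = \nu$, and monotonicity of $s \mapsto s^d$ on $[0,1]$. No hard obstacle is expected; the main care is bookkeeping the constant $c_q$ and checking the interpolation between the small-$dt$ and large-$dt$ regimes.
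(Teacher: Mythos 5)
Your overall route is sound and genuinely different from the paper's. For the upper bound in \eqref{BoundGin1}, the paper first bounds $s^k \leq e^{-k(1-s)}$, splits the sum over $k$ at $k_s=\lfloor (1-s)^{-1}\rfloor$, and controls the tail by a dyadic decomposition combined with Markov's inequality at the $q$-th moment; you instead prove a pointwise-in-$d$ inequality $s^d-1+d(1-s)\leq C_q\,\big(d(1-s)\big)^q$ and sum it directly against $\mu$, using only $\sum_d d^q\mu(d)=m_q$. This is cleaner and avoids the dyadic/Markov step entirely. The lower bound in \eqref{BoundGin1} (tangent line of a convex function at $s=1$) and both bounds in \eqref{BoundGin0} coincide with the paper's argument, and your reading of $(s-1)^q$ as $(1-s)^q$ is indeed how the statement is meant.

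One local correction is needed in your sublemma, though: the claimed bound $\varphi(t)\leq \min\{dt+\tfrac12 d^2t^2,\,2\}$ for $\varphi(t)=(1-t)^d-1+dt$ is wrong on both counts. The constant $2$ fails, since $\varphi(1)=d-1$ is unbounded in $d$; and in the regime $dt\leq 1$ a bound containing the linear term $dt$ cannot be absorbed into $C_q(dt)^q$, because $(dt)^q\ll dt$ as $dt\to 0$ when $q>1$. The correct (and sufficient) pair of bounds is: $\varphi(t)\leq \tfrac12 (dt)^2$ for all $t\in[0,1]$ (since $\varphi(0)=\varphi'(0)=0$ and $0\leq\varphi''(t)=d(d-1)(1-t)^{d-2}\leq d^2$, or via $(1-t)^d\leq e^{-dt}\leq 1-dt+\tfrac12(dt)^2$), and $\varphi(t)\leq dt$ for all $t$ (since $(1-t)^d\leq 1$). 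Then $\varphi(t)\leq\tfrac12(dt)^q$ when $dt\leq 1$ (as $q\leq 2$) and $\varphi(t)\leq(dt)^q$ when $dt\geq 1$ (as $q>1$), so your sublemma holds with $C_q=1$ and the rest of your argument goes through verbatim.
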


Note that we also easily get that the parameters $(1-\bar{\gamma}_k)_{0 \leqslant k \leqslant n}$ can be recursively determined in terms of the function $F(t):=1-G(1-t)$.
Indeed, for $0 \leqslant k \leqslant n$, we have
\[
1- \bar\gamma_{k+1} =1-G(\bar{\gamma}_k) = F(1-\bar{\gamma}_k) \,.
\]
A direct application of~\eqref{BoundGin1} give the following bounds on $F$, that will turn useful in the proofs: for all $t \in [0,1]$,
\begin{equation}
\label{BornesF}
\nu t (1-C_{\mu} t^{q-1}) \leqslant F(t) := 1-G(1-t) \leqslant \nu t \,,
\end{equation}
where $C_{\mu}=c m_q/\nu$.

We conclude by stating the following lemma on recursively defined sequences $u_{j+1}=G(u_j)$.
Again, its proof is elementary but included in Appendix~\ref{app:proofs} for completeness.
\begin{lemma}
\label{Recursionzero} 
Let $\alpha \in (0,1)$ and $u_0 \leqslant 1- \alpha$. Define recursively the sequence $(u_j)_{j\geq 0}$ by the relation $u_{j+1}=G(u_j)$. 
Then:
\begin{itemize}
\item[(i)] If $d_0=1$, \textit{i.e.}\ $\mu(1)\in (0,1)$, there is a constant $C_{\alpha} = C_{\alpha}(\mu)$ such that for all $j \geqslant 1$
\begin{equation}
\label{RecursionIn0}
u_0 \, \mu(1)^{j} \leqslant u_j \leqslant C_{\alpha}\,  u_0 \mu(1)^j.
\end{equation}
\item[(ii)] If $d_0\geq 2$, there is a constant $c_{\alpha}>0$ such that for all $j\geq 1$,
\begin{equation}
(u_0)^{d_0^{j-1}} \mu(d_0)^{\frac{ d_0^j}{d_0-1}} \leqslant u_j \leqslant  e^{- c_{\alpha} d_0^j}\,.
\end{equation}
\end{itemize}
(Note that the decay is doubly exponential in the second case.)
\end{lemma}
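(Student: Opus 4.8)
The plan is to rely only on elementary properties of the offspring generating function $G$ near its fixed point at the origin. Since $\mu(0)=0$ we have $G(0)=0$, and $G$ is non-decreasing on $[0,1]$ with $G(1)=1$ and $G(s)\le s$ (a convex function lies below its chords). Hence, starting from $u_0\le 1-\alpha<1$, the sequence $(u_j)_{j\ge0}$ stays in $[0,1-\alpha]$, is non-increasing, and decreases to $0$; its rate is governed by the lowest-order term of $G$ at $0$, namely $\mu(1)s$ when $d_0=1$ and $\mu(d_0)s^{d_0}$ when $d_0\ge2$, which is exactly the dichotomy in the statement. Throughout I will only use the trivial bounds $\mu(1)s\le G(s)\le \mu(1)s+(1-\mu(1))s^2$ for $d_0=1$ (using $s^d\le s^2$ for $d\ge2$) and $\mu(d_0)s^{d_0}\le G(s)\le s^{d_0}$ for $d_0\ge2$ (using that $\mu$ is supported on $\{d\ge d_0\}$), both valid for $s\in[0,1]$; the first inequality in each pair is also contained in~\eqref{BoundGin0}.

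\textbf{Case $d_0=1$.} The lower bound is immediate: $G(s)\ge\mu(1)s$ gives $u_{j+1}\ge\mu(1)u_j$, hence $u_j\ge\mu(1)^j u_0$ by induction. For the upper bound I would write $G(s)\le\mu(1)s\bigl(1+c\,s\bigr)$ with $c:=\tfrac{1-\mu(1)}{\mu(1)}$, so that $u_{j+1}\le\mu(1)u_j(1+c\,u_j)$ and, iterating, $u_j\le u_0\,\mu(1)^j\prod_{i=0}^{j-1}(1+c\,u_i)$. To bound the product I would first extract a crude geometric decay: since $u_i\le u_0\le 1-\alpha$, one has $u_{i+1}\le\lambda u_i$ with $\lambda:=1-(1-\mu(1))\alpha\in(0,1)$, whence $u_i\le\lambda^i u_0$ and $\sum_{i\ge0}u_i\le u_0/(1-\lambda)$ is finite. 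Then $\prod_{i\ge0}(1+c\,u_i)\le\exp\bigl(c\sum_{i\ge0}u_i\bigr)\le\exp\bigl(1/(\mu(1)\alpha)\bigr)=:C_\alpha$, a constant depending only on $\mu$ and $\alpha$ (here $u_0<1$ is used), which yields $u_j\le C_\alpha u_0\mu(1)^j$.

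\textbf{Case $d_0\ge2$.} Here $G(s)=s^{d_0}\sum_{d\ge d_0}\mu(d)s^{d-d_0}\le s^{d_0}$ on $[0,1]$, so $u_{j+1}\le u_j^{d_0}$ and therefore $u_j\le u_0^{\,d_0^j}\le(1-\alpha)^{d_0^j}=e^{-c_\alpha d_0^j}$ with $c_\alpha:=\log\tfrac{1}{1-\alpha}$, which is the stated doubly exponential upper bound. For the matching lower bound, $G(s)\ge\mu(d_0)s^{d_0}$ gives $u_{j+1}\ge\mu(d_0)u_j^{d_0}$; unrolling this recursion yields $u_j\ge\mu(d_0)^{1+d_0+\cdots+d_0^{j-1}}u_0^{\,d_0^j}=\mu(d_0)^{(d_0^j-1)/(d_0-1)}u_0^{\,d_0^j}$, which is of the announced doubly exponential form (the precise exponents in the statement are recovered by crude bookkeeping, using $\tfrac{d_0^j-1}{d_0-1}\le\tfrac{d_0^j}{d_0-1}$ and $u_0<1$).

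The only genuinely non-automatic step I anticipate is the control of the product $\prod_i(1+c\,u_i)$ in the upper bound of the case $d_0=1$: it cannot be telescoped naively, and one must first establish the crude geometric decay $u_i\le\lambda^i u_0$ from the hypothesis $u_0\le1-\alpha$ in order to make $\sum_iu_i$ converge — this is also where the dependence of $C_\alpha$ on $\alpha$ enters. The case $d_0\ge2$ is entirely straightforward once one notices that $\mu(0)=0$ already forces $G(s)\le s^{d_0}$, so no refinement of the generating-function estimates is needed there.
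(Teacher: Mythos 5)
Your proposal is correct in substance and follows essentially the same route as the paper: iterate the elementary bounds $\mu(d_0)s^{d_0}\le G(s)$ together with a matching upper bound, and, in case (i), tame the product $\prod_i(1+c\,u_i)$ by first extracting geometric decay of $(u_i)$ from $u_0\le 1-\alpha$ and then bounding the product by $\exp\bigl(c\sum_i u_i\bigr)$. The paper obtains the geometric decay from convexity ($G(s)\le p_\alpha s$ on $[0,1-\alpha]$ with $p_\alpha=G(1-\alpha)/(1-\alpha)<1$), whereas you obtain it from the quadratic bound $G(s)\le\mu(1)s+(1-\mu(1))s^2$, giving $\lambda=1-(1-\mu(1))\alpha$; the two are interchangeable. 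In case (ii) your remark that $G(s)\le s^{d_0}$ on all of $[0,1]$ (because $\mu$ is supported on $\{d\ge d_0\}$) actually streamlines the paper's argument for the upper bound, which works with $G(s)\le\mu(d_0)s^{d_0}+s^{d_0+1}$ and therefore needs a threshold $v_0$ and a rank $j_\alpha$ before the doubly exponential decay kicks in; your version yields $c_\alpha=\log\frac{1}{1-\alpha}$ directly for all $j$.

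One step in (ii) does not go through as you wrote it. Your unrolled lower bound $u_j\ge\mu(d_0)^{(d_0^j-1)/(d_0-1)}\,u_0^{\,d_0^j}$ is exactly what the paper's own iterate \eqref{eq:formulaiterate} produces, but the ``crude bookkeeping'' you invoke to reach the displayed exponent $u_0^{\,d_0^{j-1}}$ cannot be carried out: since $u_0<1$ one has $u_0^{\,d_0^j}\le u_0^{\,d_0^{j-1}}$, so the replacement goes the wrong way for a lower bound. In fact the lower bound as printed in the lemma is not correct as stated: if $\mu(d_0)=1$ then $u_j=u_0^{\,d_0^j}$ exactly, which is strictly smaller than $u_0^{\,d_0^{j-1}}$; the exponent $d_0^{j-1}$ is evidently a misprint for $d_0^j$ (the same slip appears in the paper's proof, where it asserts $u_j=(u_0)^{d_0^{j-1}}$ in the case $\mu(d_0)=1$). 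With that corrected reading, your bound does imply the statement, because $\mu(d_0)^{(d_0^j-1)/(d_0-1)}\ge\mu(d_0)^{d_0^j/(d_0-1)}$; so you should state the corrected exponent explicitly rather than appeal to a bookkeeping step that is in the wrong direction.
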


\subsubsection{Proof of Proposition~\ref{PhaseTransGammas}}
For our purposes, it will be more convenient to work with the iteration from Lemma~\ref{defGamma} which goes from the leaves to the root.
We therefore prefer to work with $\bar\gamma_k$ instead of $\gamma_k$, and we want to show the following:
\begin{align*}
c_1 \, \nu^{-(\bar{k}^*-k)} & \leqslant  1-\bar{\gamma}_k \leqslant \nu^{-(\bar{k}^*-k)}   &  \text{for all } k \leqslant \bar{k}^*, \\
c_2 \mu(1)^{k-\bar{k}^*} & \leqslant  \bar{\gamma}_k \leqslant c_3 \mu(1)^{k-\bar{k}^*} & \text{for all } k \geqslant \bar{k}^* \,.
\end{align*}
The second line being valid if $d_0=1$ (\textit{i.e.}\ $\mu(1)\in(0,1)$),
and replaced with 
\[
e^{- c_3 (d_0)^{k-\bar k^*}}\leq \bar \gamma_k\leq e^{- c_2 (d_0)^{k-\bar k^*}} 
\] 
in the case where $d_0\geq 2$.
The main idea of the proof is to use the iteration $1-\bar \gamma_{k+1} = F(1-\bar \gamma_k)$ together with the estimate~\eqref{BornesF}, until $1-\bar \gamma_k$ is not small enough; we show that this occurs around $k=\bar k^*$.
Then, we apply Lemma~\ref{Recursionzero} to the iteration $\bar \gamma_{k+1} = G(\bar \gamma_k)$ starting from $k=\bar k^*$ for which $\bar\gamma_{\bar k^*}$ is not too small.

\smallskip
\noindent
{\it Step 1. }
We start with some intermediate result which makes use of~\eqref{BornesF}.
\begin{lemma}
\label{K1*}
Let $\bar{k}_1^*=\min \{ 0 \leqslant k \leqslant n \, : \, \sum_{i=0}^k C_{\mu} (1- \bar{\gamma}_i)^{q-1} > 1/2 \}$, with $C_{\mu}= c m_q/\nu$ as in~\eqref{BornesF}.
Then, for all $k \leqslant \bar{k}_1^*$ we have
\begin{equation}
\dfrac{1}{2} \nu^k p_n \leqslant 1-\bar{\gamma}_k \leqslant \nu^k p_n.
\end{equation}
The upper bound is valid for any $0\leq k\leq n$.
\end{lemma}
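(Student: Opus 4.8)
**Plan for the proof of Lemma~\ref{K1*}.**

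The plan is to exploit the recursion $1-\bar\gamma_{k+1} = F(1-\bar\gamma_k)$ together with the two-sided bound \eqref{BornesF}, namely $\nu t(1-C_\mu t^{q-1}) \le F(t) \le \nu t$. Write $t_k := 1-\bar\gamma_k$, so that $t_0 = p_n$ and $t_{k+1} = F(t_k)$. The upper bound is immediate: the inequality $F(t)\le \nu t$ gives $t_{k+1}\le \nu t_k$ by monotonicity of $F$ (note $F$ is increasing since $G$ is increasing), hence $t_k \le \nu^k t_0 = \nu^k p_n$ for every $0\le k\le n$, with no restriction. This handles one half of the statement.

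For the lower bound I would iterate the left inequality in \eqref{BornesF}. From $t_{k+1} \ge \nu t_k (1 - C_\mu t_k^{q-1})$ and the fact that all $t_k \le 1$, a telescoping/product argument gives
\[
t_k \ge \nu^k p_n \prod_{i=0}^{k-1} \big(1 - C_\mu t_i^{q-1}\big) \ge \nu^k p_n \Big(1 - \sum_{i=0}^{k-1} C_\mu t_i^{q-1}\Big),
\]
where the last step uses the elementary bound $\prod(1-a_i) \ge 1 - \sum a_i$ for $a_i\in[0,1]$. By the definition of $\bar k_1^*$, as long as $k \le \bar k_1^*$ we have $\sum_{i=0}^{k-1} C_\mu t_i^{q-1} \le \sum_{i=0}^{k} C_\mu t_i^{q-1} \le 1/2$, so $t_k \ge \frac12 \nu^k p_n$, which is exactly the claimed lower bound. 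One small point to be careful about is the indexing in the definition of $\bar k_1^*$ (the sum runs up to $k$, whereas the product above runs to $k-1$), but since all terms are non-negative this only makes the bound easier to satisfy, so there is no issue.

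I do not expect any serious obstacle here; this lemma is really just a clean bookkeeping step. The only mild subtlety is making sure $F$ is indeed non-decreasing on $[0,1]$ (so that $t_{k+1}=F(t_k)\le \nu t_k$ propagates correctly), which follows from $F'(t) = G'(1-t) \ge 0$; and checking that the $t_i$ appearing are automatically in $[0,1]$ so that \eqref{BornesF} and the product inequality apply — this holds because $\bar\gamma_k\in[0,1]$ by Lemma~\ref{MonoGamma} and $\bar\gamma_0 = 1-p_n$. The genuinely delicate work — showing that $\bar k_1^*$ is comparable to $\bar k^* = \log_\nu(1/p_n)$ and then switching to the $G$-iteration of Lemma~\ref{Recursionzero} — is deferred to the subsequent steps of the proof of Proposition~\ref{PhaseTransGammas}, not to this lemma.
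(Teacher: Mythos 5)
Your proof is correct and takes essentially the same route as the paper: the upper bound by iterating $F(t)\leq \nu t$, and the lower bound by iterating the left inequality of \eqref{BornesF} and applying the Weierstrass product inequality together with the definition of $\bar{k}_1^*$. The only nitpick is that at $k=\bar{k}_1^*$ the chain $\sum_{i=0}^{k-1}C_{\mu}(1-\bar{\gamma}_i)^{q-1}\leq\sum_{i=0}^{k}C_{\mu}(1-\bar{\gamma}_i)^{q-1}\leq 1/2$ fails in its second inequality; one should instead invoke the minimality of $\bar{k}_1^*$ to conclude $\sum_{i=0}^{k-1}C_{\mu}(1-\bar{\gamma}_i)^{q-1}\leq 1/2$ directly, which is exactly what the paper does.
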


\begin{proof}
Let $0 \leqslant k \leq n$. As $1-\bar{\gamma}_k = F(1-\bar{\gamma}_{k-1})$, the right-hand side of the inequality \eqref{BornesF} gives us
$1-\bar{\gamma}_k = F(1-\bar{\gamma}_{k-1}) \leqslant \nu (1-\bar{\gamma}_{k-1}).$
Applying this inequality iteratively to all $1-\bar{\gamma}_i$ with $i \leqslant k-1$, we obtain
\[
1-\bar{\gamma}_k \leqslant \nu^k (1-\bar{\gamma}_{0}) = \nu^k p_n,
\]
where we have used that $1-\bar{\gamma}_0=p_n$. This proves the right-hand side of the inequality.

Using the left-hand side of~\eqref{BornesF}, we have $1-\bar{\gamma}_k \geqslant \nu (1-\bar{\gamma}_{k-1}) (1-C_{\mu} (1-\bar{\gamma}_{k-1})^{q-1})$.
Applying this inequality iteratively to all $1-\bar{\gamma}_i$ such that $i \leqslant k-1$, we have
\begin{equation}
\label{eq:boundbardeltak}
1-\bar{\gamma}_k \geqslant \nu^k (1-\bar{\gamma}_0) \prod_{i=0}^{k-1} \Big( 1-C_{\mu} (1-\bar{\gamma}_{i})^{q-1} \Big) \,.
\end{equation}
By definition of $\bar{k}_1^*$, and the fact that $C_{\mu}$ and $(1-\bar{\gamma}_j)_{0 \leqslant j \leqslant n}$ are non-negative, we have that $C_{\mu} (1-\bar{\gamma}_k)^{q-1} \leqslant 1/2$ for all $k \leqslant \bar{k}_1^*$. Thus, applying the Weierstrass inequality $\prod_{i=0}^{k-1} (1-x_i) \geq 1- \sum_{i=0}^{k-1} x_i$ for $x_i \in [0,1]$, we get that for $0\leq k \leq \bar{k}_1^*$,
\[
\prod_{i=0}^{k-1} \Big( 1-C_{\mu} (1-\bar{\gamma}_{i})^{q-1} \Big) \geqslant 1- \sum_{i=0}^{k-1} C_{\mu} (1-\bar{\gamma}_i)^{q-1} \geq \frac12 \,,
\]
where the last inequality follows again thanks to the definition of $\bar{k}_1^*$.
This concludes the left-hand side of the inequality, using again that $1-\bar{\gamma}_0=p_n$.
\end{proof}

\smallskip
\noindent
{\it Step 2. }
We now show the bounds on $1-\bar \gamma_k$.
First of all, since the bound $1-\bar{\gamma}_k\leq \nu^k p_n$ in Lemma~\ref{K1*} is valid for any $0\leq k\leq n$, this  gives the desired upper bound, recalling that~$\bar{k}^*$ is such that $\nu^{\bar{k}^*} p_n=1$, see~\eqref{defk*}.

For the lower bound on $1-\bar{\gamma}_k$, thanks to Lemma~\ref{K1*} we have a lower bound for all $k\leq \bar{k}_{1}^*$. If $\bar{k}^*\leq \bar{k}_{1}^*$ this concludes the proof, otherwise we need to control $1-\bar\gamma_k$ for $\bar{k}_{1}^*< k \leq \bar{k}^*$.
In fact, we show that $\bar k_1^*$ is comparable to $\bar k^*$, in the sense that there exist two constants $L_1, L_2$, that only depend on the the law $\mu$, such that 
\begin{equation}
\label{k*k1*}
L_1 + \bar{k}^* \leq \bar{k}_1^* \leqslant \bar{k}^* + L_2 \,.
\end{equation}

To get the upper bound in~\eqref{k*k1*}, we use that by the upper bound in Lemma~\ref{K1*}, we have 
\begin{equation*}
\sum_{i=0}^{\bar{k}_1^*} (1-\bar{\gamma}_i)^{q-1} \leq p_n^{q-1} \sum_{i=0}^{\bar{k}_1^*} \nu^{i(q-1)} = p_n^{q-1}\dfrac{\nu^{(\bar{k}_1^*+1)(q-1)} -1}{\nu^{q-1}-1}  \leq  \dfrac{\nu^{q-1}}{\nu^{q-1}-1} \, \nu^{(\bar{k}_1^*-\bar{k}^*)(q-1)} \,,
\end{equation*}
where we have again used that $p_n=\nu^{\bar k^*}$.
Thus, by definition of $\bar{k}_1^*$, we have that $\frac{1}{2 C_{\mu}} \leq \frac{\nu^{q-1}}{\nu^{q-1}-1} \nu^{(\bar{k}_1^*-\bar{k}^*)(q-1)}$,
which yields $\bar{k}_1^* \geq \bar{k}^* + L_1$, for $L_1 := \frac{1}{q-1}\log_{\nu} \big( \frac{\nu^{q-1} -1}{2 \nu^{q-1} C_{\mu}} \big)$.

For the lower bound in~\eqref{k*k1*}, using the lower bound in Lemma~\ref{K1*}, we have similarly as above
\begin{equation*}
\sum_{i=0}^{\bar{k}_1^*-1} (1-\bar{\gamma}_i)^{q-1} \geqslant \dfrac{1}{2^{q-1}} p_n^{q-1} \sum_{i=0}^{\bar{k}_1^*-1} \nu^{i(q-1)} \geq  \dfrac{1}{2^{q-1}} p_n^{q-1} \, \nu^{(\bar{k}_1^*-1)(q-1)} = \frac{1}{(2\nu)^{q-1}} \, \nu^{(\bar{k}_1^*-\bar{k}^*)(q-1)} .
\end{equation*}
Thus, by definition of $\bar{k}_1^*$ we have $\frac{1}{(2\nu)^{q-1}}  \, \nu^{(\bar{k}_1^* -\bar{k}^*)(q-1)} \leqslant \frac{1}{2 C_{\mu}}$,
which yields the bound $\bar{k}_1^* \leqslant \bar{k}^* + L_2$, for $L_2 := \frac{1}{q-1}\log_{\nu} \big( \frac{(2\nu)^{q}}{2C_{\mu}} \big)$.

Therefore, by the lower bound in Lemma~\ref{K1*} and recalling the definition of $\bar{k}^*$, we obtain that $1-\bar{\gamma}_{\bar{k}_1^*} \geq \alpha_1$, for $\alpha_1 :=\frac{1}{2} \nu^{L_1} \leq \frac12$ (recall we are treating the case $\bar{k}_1^* < \bar{k}^*$ so $L_1\leq 0$).
Using that $(1-\bar{\gamma}_k)_{k\geq 0}$ is non-decreasing, we get that $1-\bar{\gamma}_{k}\geq \alpha_1$ for all $ \bar k_1^* \leq k\leq \bar k^*$.
Adjusting the constant (and since $\nu^k p_n$ is of order one for $\bar{k}_1^*\leq k\leq \bar{k}^*$), we therefore get that for all $k \leqslant \bar{k}^*$, 
\begin{equation}
\label{ineq:bardeltak}
1-\bar{\gamma}_k \geq c_1 \nu^k p_n,
\end{equation}
which gives the desired bound, using the definition of $\bar{k}^*$.

\smallskip
\noindent
{\it Step 3. }
We now conclude the proof by showing the bounds on $\bar{\gamma}_k$ for $k \geqslant \bar{k}^*$. 
We have proven above that $1-\bar{\gamma}_{\bar{k}^*} \geqslant  \alpha_1$ for some $\alpha_1<1$ that depends only on the law $\mu$. 
Indeed, this is in the previous paragraph if $\bar{k}_1^{*}< \bar{k}^*$, and follows simply from Lemma~\ref{K1*} if $\bar{k}_1^{*} \geq  \bar{k}^*$ since then $1-\bar{\gamma}_{\bar{k}^*} \geq \frac12 \nu^{\bar k^*} p_n =\frac12$.
We therefore get that $\bar{\gamma}_{\bar{k}^*} \leqslant 1- \alpha_1$. 

We can then apply Lemma~\ref{Recursionzero} to $u_j:=\bar{\gamma}_{\bar{k}^*+j}$, for $j =k-\bar{k}^* \geqslant 0$:
in the case $d_0=1$ ($\mu(1)\in (0,1)$), we obtain 
\[
\bar{\gamma}_{\bar{k}^*} \mu(1)^{k-\bar{k}^*} \leqslant  \bar{\gamma}_k \leqslant C_{\alpha_1} \mu(1)^{k-\bar{k}^*} \,,
\]
and a similar application of Lemma~\ref{Recursionzero} gives the correct upper bound in the case $d_0\geq 2$.

This concludes the upper bound on $\bar \gamma_k$ (the important part), but for the lower bound, we need to show that $\bar\gamma_{\bar k^*} \geq \alpha_2$ for some universal constant $\alpha_2>0$ that depends only on the law~$\mu$.
To see this, note that the upper bound in Lemma~\ref{K1*} gives that $1-\bar \gamma_{\bar k^*-1} \leq \nu^{\bar k^*-1} p_n = \nu^{-1}$, so $\bar\gamma_{\bar k^*} = G(\bar \gamma_{\bar k^* -1}) \leq G(1-\nu^{-1}) =:\alpha_2$.
One can then apply Lemma~\ref{Recursionzero} to obtain the correct lower bounds.
\qed

\subsection{A sharp transition in the shape of the pruned tree}
\label{sec:shape}

We now provide a statement that clarifies the intuition that the offspring distribution~$\mu_k^*$ of the pruned tree $\bT^*_n$ is close to $\mu$ for generations $0 \leqslant k \leqslant k^*$ and close to a Dirac mass at $1$ for generations $k^* \leq k \leq n-1$: this is Proposition~\ref{prop:d_TV}.
We also estimate the mean and ``$q$-variance'' of the offspring distribution $\mu_k^*$ and the mean size of the generation $k$ of the pruned tree $\bT^*_n$: these estimates turn out to be crucial in the proof of our main result, in particular in Section~\ref{sec:estimatecapa} below. 
In all cases, the proofs rely heavily on Proposition~\ref{PhaseTransGammas} and are postponed to Appendix~\ref{app:phasetransition}.
We refer to Figure~\ref{fig:BeforeAfterPruning} for an illustration of this transition in the shape of the pruned tree.

\begin{figure}[t]
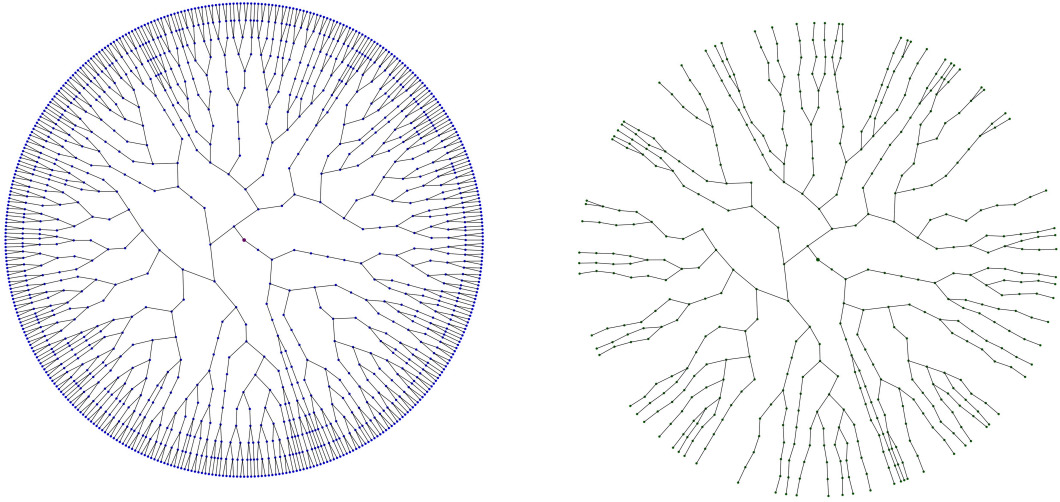

	\centering
	\includegraphics[scale=0.24]{treeStart.pdf}
	\qquad
	\includegraphics[scale=0.24]{treeEnd.pdf}
	\caption{\small Illustration of the transition in the shape of the pruned tree.
On the left, we have represented a tree $\bt$ with mean offspring $\nu = 1.5$ and $n=15$ generations. 
On the right, the picture represents the tree $\bt^*$  obtained by pruning $\bt$ with Bernoulli random variables on the leaves of parameters $p_n=0.2$. 
We can observe a change in the pruned tree $\bt^*$  at generation $k^*\approx 11$ (note that it matches with the definition $k^*=n-\log_\nu(p_n)$, see~\eqref{defk*}, since here $-\log_{1.5} (0.2) \approx 3.97$). For $k\leq k^*$, the tree $\bt^*$ looks like the original tree $\bt$, whereas for $k>k^*$ mainly thin branches remain.}
	\label{fig:BeforeAfterPruning}
\end{figure}


\subsubsection{About the offspring distribution}

Recall that if $\mu$ and $\tilde \mu$ are two probability measures on a measurable space $(\Omega, \mathcal{A})$, the total variation distance between $\mu$ and $\tilde \mu$ is defined by 
\[
d_{\TV} (\mu, \tilde \mu) := \sup_{A \in \mathcal{A}} \vert \mu(A) - \tilde \mu(A) \vert.
\]
An important property of the total variation is that it can be rewritten as
$d_{\TV} (\mu, \nu) = \inf_{X\sim \mu, Y\sim \tilde \mu} \PP ( X \neq Y)$,
where the infimum is taken over all couplings of $\mu,\tilde\mu$, \textit{i.e.}\ all (joint) distributions for pairs of random variables $(X, Y)$ whose marginal distributions are $X \sim \mu$ and $Y \sim \tilde \mu$.


\begin{proposition}
\label{prop:d_TV}
There are some constants $c_4,c_4'>0$ such that
\begin{align*}
d_{\TV} (\mu_{k}^*, \mu)  & \leqslant c_4' e^{- c_4 (k^*-k)} \qquad  \text{ for }  k \leq  k^*\,,
 \\
d_{\TV} (\mu_{k}^*, \delta_1) & \leqslant   2  \nu{-(k-k^*)} \qquad \ \  \text{ for }  k \geqslant k^*
 \,,
\end{align*}
where we denoted by $\delta_1$ the Dirac mass at $1$.
\end{proposition}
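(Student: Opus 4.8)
The plan is to build everything on the representation provided by Lemma~\ref{ZeroTBin}: $\mu_k^* = \widehat{\Bin}(X, 1-\gamma_{k+1})$ with $X\sim\mu$. Concretely, a sample from $\mu_k^*$ is obtained by taking $X$ potential children, independently \emph{killing} each with probability $\gamma_{k+1}$, and then conditioning on there being at least one survivor. Thus both total-variation estimates reduce to controlling two sources of error: the error introduced by the killing, which is small precisely when $\gamma_{k+1}$ is small (the regime $k\ll k^*$), and the error introduced by the conditioning, which is governed by $\PP(B=0)=\gamma_k$ where $B := \mathrm{Bin}(X,1-\gamma_{k+1})$. The sharp-transition estimates of Proposition~\ref{PhaseTransGammas}, in the compact form~\eqref{eq:boundsparameters}, together with the bounds~\eqref{BornesF} on $F$, then supply all the needed quantitative input.

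For $d_{\TV}(\mu_k^*,\mu)$ with $k\le k^*$, I would use the triangle inequality $d_{\TV}(\mu_k^*,\mu)\le d_{\TV}(\mu_k^*,\mathrm{Law}(B))+d_{\TV}(\mathrm{Law}(B),\mu)$. Since $\mu_k^*$ is $\mathrm{Law}(B)$ conditioned on $\{B>0\}$, an elementary computation gives $d_{\TV}(\mathrm{Law}(B\mid B>0),\mathrm{Law}(B))=\PP(B=0)=\gamma_k$. For the second term, the obvious monotone coupling (kill the children of a common $X\sim\mu$) together with the coupling inequality gives $d_{\TV}(\mathrm{Law}(B),\mu)\le\PP(B\neq X)=\E\big[1-(1-\gamma_{k+1})^X\big]\le\gamma_{k+1}\E[X]=\nu\gamma_{k+1}$, using $1-(1-t)^m\le mt$. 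Adding the two contributions and inserting $\gamma_k\le e^{-c_4(k^*-k)}$ and $\gamma_{k+1}\le e^{-c_4(k^*-k-1)}=e^{c_4}e^{-c_4(k^*-k)}$ from~\eqref{eq:boundsparameters} yields $d_{\TV}(\mu_k^*,\mu)\le(1+\nu e^{c_4})e^{-c_4(k^*-k)}$ for $k<k^*$, and the bound is trivial at $k=k^*$; this is the first inequality with $c_4':=1+\nu e^{c_4}$.

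For $d_{\TV}(\mu_k^*,\delta_1)$ with $k\ge k^*$, observe that $d_{\TV}(\mu_k^*,\delta_1)=1-\mu_k^*(1)=\PP(B\ge 2)/\PP(B\ge 1)=\PP(B\ge 2)/(1-\gamma_k)$. For the numerator I would use $\PP(B\ge 2)\le\E[(B-1)_+]=\E[B]-\PP(B\ge 1)=\nu(1-\gamma_{k+1})-(1-\gamma_k)$; writing $t:=1-\gamma_{k+1}$ and recalling $1-\gamma_k=F(t)$, the lower bound in~\eqref{BornesF} gives $\nu t-F(t)\le\nu C_\mu t^{q}$, hence $\PP(B\ge 2)\le\nu C_\mu(1-\gamma_{k+1})^{q}$. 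Dividing by $1-\gamma_k=F(t)\ge\nu t(1-C_\mu t^{q-1})$ gives $d_{\TV}(\mu_k^*,\delta_1)\le C_\mu t^{q-1}/(1-C_\mu t^{q-1})$, which for $k-k^*$ beyond some $\mu$-dependent constant (so that $C_\mu t^{q-1}\le\tfrac12$, using $t=1-\gamma_{k+1}\le\nu^{-(k+1-k^*)}$ from~\eqref{eq:boundsparameters}) is at most $2C_\mu\nu^{-(q-1)(k+1-k^*)}$; absorbing the finitely many remaining values of $k$ into the constant gives $d_{\TV}(\mu_k^*,\delta_1)\le C\,\nu^{-(q-1)(k-k^*)}$. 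When $\mu$ has a finite second moment ($q=2$) this is exactly the stated $O(\nu^{-(k-k^*)})$; for a general $q$-moment one obtains the exponent $(q-1)\log\nu$, which is still a geometric decay in $k-k^*$ and is what is needed in Section~\ref{sec:capacitypruned}.

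The only genuinely delicate point is the $\delta_1$ estimate. The naive bound $\PP(\mathrm{Bin}(m,q)\ge 2)\le\binom{m}{2}q^2$ would require a second moment of $\mu$, so one cannot argue termwise; instead the decay rate must be extracted from the \emph{cancellation} $\nu t-F(t)=O(t^{q})$ in Lemma~\ref{lem:BoundsG}, i.e.\ from the regularity of the generating function $G$ near $1$ that the $q$-moment encodes. This is precisely where the moment hypothesis is used. Everything else — the two coupling computations and the substitution of the $\gamma_k$-asymptotics from Proposition~\ref{PhaseTransGammas} — is routine bookkeeping.
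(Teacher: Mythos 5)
The bound for $k\leq k^*$ is correct: your splitting $d_{\TV}(\mu_k^*,\mu)\leq d_{\TV}\big(\mathrm{Law}(B\mid B>0),\mathrm{Law}(B)\big)+d_{\TV}\big(\mathrm{Law}(B),\mu\big)\leq \gamma_k+\nu\gamma_{k+1}$ is a clean variant of the paper's direct coupling estimate (which bounds $\P(X_k^*\neq X)$ in one go and yields $\tfrac{\nu}{\alpha_1}\gamma_{k+1}$), and inserting~\eqref{eq:boundsparameters} gives the stated inequality; both computations you use (conditioning error $=\P(B=0)$, killing error $\leq\nu\gamma_{k+1}$) are exact.

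The estimate for $k\geq k^*$, however, does not prove the statement as written. Your route via $\P(B\geq 2)\leq \E[B]-\P(B\geq 1)=\nu t-F(t)\leq \nu C_\mu t^{q}$ (with $t=1-\gamma_{k+1}$) only yields $d_{\TV}(\mu_k^*,\delta_1)\leq C\,\nu^{-(q-1)(k-k^*)}$, which is strictly weaker than the claimed $2\,\nu^{-(k-k^*)}$ whenever $q<2$; the proposition asserts the exponent $1$ regardless of which $q\in(1,2]$ moment is finite. Your closing diagnosis is also off: the full rate does \emph{not} require extracting the cancellation $\nu t-F(t)=O(t^q)$, nor any moment beyond the first. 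The paper argues termwise after a truncation: for each offspring size $d$ it bounds, via Bonferroni,
\begin{equation*}
\P\big(B_{k,d}\geq 2\mid B_{k,d}>0\big)\leq (d-1)(1-\gamma_{k+1})\qquad\text{for } d\leq (1-\gamma_{k+1})^{-1},
\end{equation*}
bounds the conditional probability by $1$ for $d>(1-\gamma_{k+1})^{-1}$, and controls that tail by Markov's inequality $\P\big(X>(1-\gamma_{k+1})^{-1}\big)\leq \nu(1-\gamma_{k+1})$, so that the total is at most $2\nu(1-\gamma_{k+1})\leq 2\,\nu^{-(k-k^*)}$ by Proposition~\ref{PhaseTransGammas}. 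So the ``naive'' binomial bound is avoided not by the generating-function cancellation but by splitting the sum over $d$ at the scale $(1-\gamma_{k+1})^{-1}$; only $\E[X]=\nu<\infty$ is used. To repair your proof you should replace the step $\P(B\geq 2)\leq\E[(B-1)_+]$ (which loses a power of $t$ through the heavy summands) by this truncation argument, or otherwise accept that you have only established the weaker rate $\nu^{-(q-1)(k-k^*)}$.
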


\subsubsection{About the mean and $q$-variance of $\mu_k^*$}

For $0 \leqslant k < n$, we let $X_k^*$ be a random variable with distribution $\mu_k^*$ and define its mean $\nu_k^* :=\E[X_k^*]$ and its \emph{$q$-variance} $\sigma_{q,k}^* := \E[(X_k^*)^q] - \E[X_k^*]^q$ for $q\in (1,2]$ (note that one recovers the variance when $q=2$).
We now give estimates on $\nu_k^*$ and $v_{q,k}^*$, and in particular we observe the phase transition around $k^*:=\log_\nu (\nu^n p_n)$.



\begin{lemma}[Mean of $\mu_k^*$]
\label{lem:boundmk}
There are constants $c_4, c_5$ (that depend only on the law $\mu$) such that
\begin{align}
\label{Boundmk-2}
&  0 \leqslant  \nu-\nu_k^*  \leqslant c_5 e^{- c_4 (k^*-k)} &  \text{ for }  k\leq k^*\,, \\
& 0 \leqslant  \nu_k^* -1 \leqslant c_5 \nu^{-(k-k^*)} &  \text{ for }   k \geq k^* \,,
\label{Boundmk-1}
\end{align}
where $c_4$ is the constant appearing in~\eqref{eq:boundsparameters}.
\end{lemma}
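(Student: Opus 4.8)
The plan is to first write $\nu_k^*$ in closed form in terms of the pruning probabilities $\gamma_k,\gamma_{k+1}$, and then to read off both estimates directly from the sharp transition of $(\gamma_k)_k$ obtained in Proposition~\ref{PhaseTransGammas}. By Lemma~\ref{ZeroTBin} we have $X_k^*\sim\widehat{\Bin}(X,1-\gamma_{k+1})$ with $X\sim\mu$; writing $B\sim\Bin(X,1-\gamma_{k+1})$ conditionally on $X$, the zero-truncation identity $\E[X_k^*]=\E[B\mid B>0]=\E[B]/\PP(B>0)$, combined with $\E[B]=(1-\gamma_{k+1})\nu$ and $\PP(B=0)=\E[\gamma_{k+1}^X]=G(\gamma_{k+1})=\gamma_k$ (Lemma~\ref{defGamma}), gives
\[
\nu_k^*=\frac{\nu\,(1-\gamma_{k+1})}{1-\gamma_k}=\frac{\nu\,t}{F(t)}\,,\qquad t:=1-\gamma_{k+1}\,,
\]
where we used $1-\gamma_k=F(1-\gamma_{k+1})$ with $F(t)=1-G(1-t)$. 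From the elementary inequalities $t\leq F(t)\leq\nu t$ on $[0,1]$ --- the upper one being part of~\eqref{BornesF} and the lower one following from convexity of $G$ together with $G(0)=\mu(0)=0$ and $G(1)=1$, which gives $G(s)\leq s$ --- one obtains the \emph{universal} bound $1\leq\nu_k^*\leq\nu$ valid for every $k$. This already yields the nonnegativity statements $0\leq\nu-\nu_k^*$ and $0\leq\nu_k^*-1$, and, through $\nu-\nu_k^*\leq\nu-1$ and $\nu_k^*-1\leq\nu-1$, it disposes of the regime where $|k-k^*|$ stays bounded; it remains to treat the two tails.

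For $k\leq k^*$, write $\nu-\nu_k^*=\nu\,\frac{F(t)-t}{F(t)}=\nu\,\frac{\gamma_{k+1}-G(\gamma_{k+1})}{1-\gamma_k}\leq\frac{\nu\,\gamma_{k+1}}{1-\gamma_k}$, using $G\geq0$. The numerator is controlled via~\eqref{eq:boundsparameters}: for $k+1\leq k^*$ one has $\gamma_{k+1}\leq e^{-c_4(k^*-k-1)}=e^{c_4}e^{-c_4(k^*-k)}$. For the denominator, the monotonicity of $(\gamma_k)_k$ (Lemma~\ref{MonoGamma}) gives $1-\gamma_k\geq1-\gamma_{k^*}$, and the $k=k^*$ instance of the lower bound in Proposition~\ref{PhaseTransGammas} gives $1-\gamma_{k^*}\geq c_1$; hence $\nu-\nu_k^*\leq(\nu e^{c_4}/c_1)\,e^{-c_4(k^*-k)}$ for $k\leq k^*-1$, the value $k=k^*$ being covered by the universal bound. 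This proves~\eqref{Boundmk-2}.

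For $k\geq k^*$, write instead $\nu_k^*-1=\frac{\nu t-F(t)}{F(t)}$ with $t=1-\gamma_{k+1}$. The lower bound in~\eqref{BornesF} (equivalently the upper bound~\eqref{BoundGin1} on $G$ near $1$) gives $\nu t-F(t)\leq c_q m_q\,t^q$, while Proposition~\ref{PhaseTransGammas} provides the two-sided control $F(t)=1-\gamma_k\geq c_1\,\nu^{-(k-k^*)}$ together with $t=1-\gamma_{k+1}\leq\nu^{-(k-k^*+1)}$ (the latter from~\eqref{eq:boundsparameters}). Combining these three inputs yields $\nu_k^*-1\leq(c_q m_q/c_1)\,\nu^{-q}\,\nu^{-(q-1)(k-k^*)}$, which is the estimate~\eqref{Boundmk-1} (the value $k=k^*$ being again covered by the universal bound; the decay rate is governed by the moment exponent $q$ and equals $\nu^{-(k-k^*)}$ when $q=2$). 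The only genuinely delicate ingredient is the lower bound $1-\gamma_k\geq c_1\nu^{-(k-k^*)}$ furnished by Proposition~\ref{PhaseTransGammas} (Step~2 of its proof): without a matching lower bound on the denominator $F(t)$, the ratio $\nu_k^*-1$ could not be bounded. The finite $q$-moment hypothesis enters only through the curvature estimate~\eqref{BornesF}--\eqref{BoundGin1} for $G$, and nowhere else.
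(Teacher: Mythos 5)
Your proof is correct and follows essentially the same route as the paper: the closed formula $\nu_k^*=\nu(1-\gamma_{k+1})/(1-\gamma_k)=\nu t/F(t)$, the two-sided bound~\eqref{BornesF} on $F$, and the estimates of Proposition~\ref{PhaseTransGammas}. The differences for $k\leq k^*$ are cosmetic: the paper simply uses $1-\gamma_k\leq 1$ to get $\nu-\nu_k^*\leq \nu\gamma_{k+1}$, whereas you keep the denominator and bound it below by $1-\gamma_{k^*}\geq c_1$; both are valid, yours just needs one more input. The one substantive point concerns $k\geq k^*$: your argument yields $\nu_k^*-1\leq C\,\nu^{-(q-1)(k-k^*)}$, and as you note this matches the stated rate $\nu^{-(k-k^*)}$ in~\eqref{Boundmk-1} only when $q=2$. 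This is not a defect of your proof relative to the paper's: the paper's own computation bounds $\nu_k^*\leq \big(1-C_{\mu}(1-\gamma_{k+1})\big)^{-1}$, i.e.\ it silently drops the exponent $q-1$ coming from~\eqref{BornesF}; with the exponent restored, the paper's argument also gives only the $(q-1)$ rate, and under a bare $q$-moment assumption with $q<2$ this rate is in fact the correct order (heavy tails make $\nu t-F(t)$ genuinely of order $t^{q}$), so~\eqref{Boundmk-1} should be read with exponent $(q-1)(k-k^*)$, in line with Lemma~\ref{lem:BoundVar}. Since the sequel only uses $\nu_k^*\geq 1$ together with some geometrically decaying control (Lemmas~\ref{lem:BoundVar} and~\ref{lem:boundMk} are what enter Lemma~\ref{lem:vkn} and the capacity bounds), this discrepancy is harmless downstream; a cleaner way to get the constant in your last step is the paper's cancellation $\nu t/F(t)\leq (1-C_\mu t^{q-1})^{-1}$, which avoids invoking the lower bound on $1-\gamma_k$ altogether.
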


\begin{lemma}[$q$-variance of $\mu_k^*$]
\label{lem:BoundVar}
If $X\sim \mu$ admits a finite moment of order $q\in(1,2]$, then $\sigma_{q,k}^* \leq m_q =\E[X^q]$ for all $k\geq 0$.
Moreover, there is a constant $c_6$ (that depend only on the law~$\mu$) such that:
\begin{equation}
  \sigma_{q,k}^* \leqslant c_6 \nu^{-(q-1) (k-k^*)} 
 \qquad  \text{ for all }  k \geq k^* .
\label{Boundvark-2}
\end{equation}
\end{lemma}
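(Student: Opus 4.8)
By Lemma~\ref{ZeroTBin} there is a coupling of $(X,X_k^*)$, with $X\sim\mu$, in which $X_k^*$ is a zero-truncated binomial thinning of $X$, so $1\le X_k^*\le X$ almost surely; since $q>0$ this gives $\E[(X_k^*)^q]\le\E[X^q]=m_q$, and therefore $\sigma_{q,k}^*=\E[(X_k^*)^q]-(\nu_k^*)^q\le\E[(X_k^*)^q]\le m_q$, for every $k$.

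\textbf{For the second bound, the plan is to isolate the rare event $\{X_k^*\ge 2\}$}, which is the only place where the $q$-variance can be large. Since $X_k^*\ge 1$ a.s.\ and $\nu_k^*\ge 1$ (Lemma~\ref{lem:boundmk}), one may write
\[
\sigma_{q,k}^* \;=\; \E[(X_k^*)^q]-(\nu_k^*)^q \;\leq\; \E\big[(X_k^*)^q-1\big] \;=\; \E\big[((X_k^*)^q-1)\ind_{\{X_k^*\geq 2\}}\big] \;\leq\; \E\big[(X_k^*)^q\ind_{\{X_k^*\geq 2\}}\big].
\]
Writing $\theta:=1-\gamma_{k+1}$ and using $\mu_k^*=\widehat{\Bin}(X,\theta)$ together with $\P(B\geq 1)=1-\gamma_k$ (Lemma~\ref{defGamma}), for a generic vertex the right-hand side equals $\tfrac{1}{1-\gamma_k}\,\E\big[B^q\ind_{\{B\geq 2\}}\big]$ with $B\sim\Bin(X,\theta)$. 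So the whole task is reduced to estimating $\E[B^q\ind_{\{B\geq 2\}}]$.

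\textbf{The core estimate} is the elementary inequality: for all integers $d\geq 0$, all $\theta\in[0,1]$ and all $q\in(1,2]$,
\[
\E\big[\Bin(d,\theta)^q\,\ind_{\{\Bin(d,\theta)\geq 2\}}\big]\;\leq\; 2\,(d\theta)^q .
\]
I would prove this by two regimes. If $d\theta\leq 1$, bound $B^q\ind_{\{B\geq 2\}}\leq B^2\ind_{\{B\geq 2\}}$ (valid since $q\leq 2$) and compute $\E[B^2\ind_{\{B\geq 2\}}]=\E[B^2]-\P(B=1)=(d\theta)^2+d\theta\big[(1-\theta)-(1-\theta)^{d-1}\big]\leq 2(d\theta)^2\leq 2(d\theta)^q$, where the penultimate step uses Bernoulli's inequality and the last uses $d\theta\leq 1$. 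If $d\theta>1$, bound $\E[\Bin(d,\theta)^q\ind_{\{\geq 2\}}]\leq\E[\Bin(d,\theta)^q]\leq(\E[\Bin(d,\theta)^2])^{q/2}\leq(2(d\theta)^2)^{q/2}\leq 2(d\theta)^q$, using Jensen for the concave map $x\mapsto x^{q/2}$ and $d\theta(1-\theta)\leq(d\theta)^2$. Conditioning on $X$ and integrating then yields $\E[B^q\ind_{\{B\geq 2\}}]\leq 2\theta^q\,\E[X^q]=2m_q\theta^q$, hence $\sigma_{q,k}^*\leq \tfrac{2m_q}{1-\gamma_k}(1-\gamma_{k+1})^q$. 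Finally, for $k\geq k^*$, Proposition~\ref{PhaseTransGammas} gives $1-\gamma_k\geq c_1\nu^{-(k-k^*)}$ and $1-\gamma_{k+1}\leq\nu^{-(k+1-k^*)}$, so $\frac{(1-\gamma_{k+1})^q}{1-\gamma_k}\leq c_1^{-1}\nu^{-q}\,\nu^{-(q-1)(k-k^*)}$, which gives the claim with $c_6:=2m_q/(c_1\nu^q)$.

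\textbf{The main obstacle is the integrability bookkeeping}: since we assume only a finite $q$-th moment of $\mu$ with $q$ possibly strictly below $2$, one cannot afford crude bounds such as $\ind_{\{B\geq 2\}}\leq\binom{B}{2}$ (which would need $\E[X^{q+2}]<\infty$), nor a naive use of Jensen (which would replace the true order $d\theta$ of $\E[\Bin(d,\theta)^q]$ by the much larger $(d\theta)^{q/2}$ when $d\theta$ is small). The whole point of the core estimate is that the indicator $\ind_{\{B\geq 2\}}$ gains precisely the factor $(d\theta)^{2-q}\leq 1$ that compensates the missing moments in the small-$d\theta$ regime, while concentration of the binomial handles the large-$d\theta$ regime; this keeps the bound uniform in $d$ and reduces everything to the assumed $q$-th moment $m_q$.
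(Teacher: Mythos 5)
Your proof of the second bound~\eqref{Boundvark-2} is correct, but the one-line argument for the first claim ($\sigma_{q,k}^*\leq m_q$) contains a genuine error. By~\eqref{defmuk} and the proof of Lemma~\ref{ZeroTBin}, $\mu_k^*$ is the law of $B\sim\Bin(X,\theta)$, $\theta:=1-\gamma_{k+1}$, conditioned on the event $\{B>0\}$ \emph{after} averaging over $X\sim\mu$ (indeed $\P(X_k^*=z)=\P(B=z)/(1-\gamma_k)$ with $1-\gamma_k=\P(B>0)$). This global conditioning tilts $X$ towards large values, and there is in general \emph{no} coupling with $X\sim\mu$, $X_k^*\sim\mu_k^*$ and $X_k^*\leq X$ a.s.: such a coupling is equivalent (Strassen) to the stochastic domination $\mu_k^*\preceq\mu$, which fails. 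For instance, take $\mu(1)=1-\gep$, $\mu(D)=\gep$ with $D$ large and $\theta=\tfrac12$: then $\P(X_k^*\geq 2)=\P(B\geq2)/\P(B\geq1)\approx \gep/\tfrac12=2\gep>\gep=\P(X\geq2)$. (If instead you truncate the binomial at zero separately for each value $X=d$, then indeed the thinned variable is $\leq X$, but its law is \emph{not} $\mu_k^*$.) The inequality $\E[(X_k^*)^q]\leq m_q$ does hold, but it needs an argument, e.g.: $\E[B^q\mid X=d]\leq d^{q-1}\E[B\mid X=d]=\theta d^q$, hence $\E[B^q]\leq\theta m_q$, while $\P(B>0)=1-\gamma_k\geq\theta$ because $X\geq1$ a.s.\ (Assumption~\ref{hyp:branching}), so $\E[(X_k^*)^q]=\E[B^q]/(1-\gamma_k)\leq m_q$. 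Note also that your own estimate from the second part only gives $\sigma_{q,k}^*\leq 2m_q(1-\gamma_{k+1})^{q-1}$ (using $1-\gamma_{k+1}\leq1-\gamma_k$, Lemma~\ref{MonoGamma}), so it does not repair the constant $1$ in the first claim.

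The second part is correct and takes a genuinely different route from the paper's. The paper writes $\E[(X_k^*)^q]=\frac{1}{1-\gamma_k}\E\big[\Bin(X,\theta)^q\big]$ and applies Neveu's inequality (Lemma~\ref{lem:Neveu}) to the binomial seen as a sum of Bernoulli variables, getting $\E[\Bin(d,\theta)^q]\leq(d\theta)^q+d\theta$; combined with $\nu_k^*=\frac{1-\gamma_{k+1}}{1-\gamma_k}\nu$ (see~\eqref{formulanuk}) and $\nu_k^*\geq1$, this yields $\sigma_{q,k}^*\leq(1-\gamma_{k+1})^{q-1}m_q$, and then \eqref{Boundvark-2} uses only the \emph{upper} bound of Proposition~\ref{PhaseTransGammas}; as a by-product it gives the first claim with constant $1$. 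Your argument instead isolates the event $\{X_k^*\geq2\}$ and proves the elementary two-regime bound $\E[\Bin(d,\theta)^q\ind_{\{\geq2\}}]\leq2(d\theta)^q$, which I checked (including the degenerate cases $d\in\{0,1\}$), leading to $\sigma_{q,k}^*\leq\frac{2m_q(1-\gamma_{k+1})^q}{1-\gamma_k}$ and then to~\eqref{Boundvark-2} using both the upper bound on $1-\gamma_{k+1}$ and the lower bound on $1-\gamma_k$ from Proposition~\ref{PhaseTransGammas}. Both routes give the same order $\nu^{-(q-1)(k-k^*)}$; yours avoids Neveu's lemma (already available in the paper) at the price of a slightly longer elementary computation and of invoking the lower bound on $1-\gamma_k$, which is indeed available. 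So: fix the first claim as indicated, and the rest stands.
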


\noindent
Let us simply stress here that Lemmas~\ref{lem:boundmk} and~\ref{lem:BoundVar} cannot be deduced from Proposition~\ref{prop:d_TV} since the distance in total variation does not allow one to control moments; we defer the proofs to Appendix~\ref{app:phasetransition}.


\subsubsection{About the size of the generations}


Recall that $T_k^*$ denotes the set of vertices in the $k$-th generation of the pruned tree~$\bT^*_n$ and $\vert T_k^*\vert$ its size. 
For ease of notations, let us note $Z_k = \vert T_k^*\vert$, for $0 \leqslant k \leq n$.
By Proposition~\ref{prop:pruning}, when the pruned tree is conditioned to be non-empty, $(Z_k)_{0\leq k\leq n}$ is an inhomogeneous branching process with offspring distribution $\mu_k^*$. 
We can construct $(Z_k)_{0\leq k\leq n}$ as follows: set $Z_0=1$ and for $0 \leqslant k < n$ 
\[
Z_{k+1} = \sum_{a=1}^{Z_k} X_{k,a},
\]
where $(X_{k,a})_{a \geqslant 1}$ is a family of i.i.d.\ random variables with distribution $\mu_k^*$.

More generally and for future use, we may want to study for any $0\leq i \leq j\leq n$ the size $Z_{i,j}$ of the population generated by an individual of generation $i$ up to generation~$j$.
The distribution of $Z_{i,j}$ can be constructed as follows:
$Z_{i,i} =1$ and iteratively, for $j\in\{i,\ldots, n-1\}$,
$Z_{i,j+1} = \sum_{a=1}^{Z_{i,j}} X_{k,a}$.
Let us now define, for $0\leq i <j \leq n$,
\begin{equation}
\label{defMij}
M_{i,j}^*:= \E[Z_{i,j}] = \prod_{k=i}^{j-1} \nu_k^*,
\end{equation}
where $M_{i,i}=1$ by convention.
We can now show how the pruned tree growth stabilizes at generation $k^*$;
recall the definition~\eqref{defk*} of $k^*:= \log_{\nu}( \nu^n p_n)$.

\begin{lemma} 
\label{lem:boundMk}
There are constants $c_4$ (from~\eqref{eq:boundsparameters}), $c_7, c_8$, that depend only on the law $\mu$, such that
\begin{align}
\label{BoundMk-2}
 \big( 1-  e^{-c_4(k^*-k)} \big) \nu^k& \leqslant  M_{0,k}^* \leqslant \nu^k
  & \text{ for }  k \leq k^* \, ,\\
 c_7 \nu^{k^*} & \leqslant M_{0,k}^* \leqslant   c_8 \nu^{k^*}
 &  \text{ for }   k \geq k^* \,.
  \label{BoundMk-1}
\end{align}
This can be summarized in a slightly weaker form as follows: for all $0\leq k \leq n$,
\[
c_{7}' \nu^{k\wedge k^*} \leq M_{0,k}^* \leq  c_8 \nu^{ k\wedge k^*} \,.
\]
\end{lemma}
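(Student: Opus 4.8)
The plan is to start from the product representation $M_{0,k}^*=\prod_{j=0}^{k-1}\nu_j^*$ (see~\eqref{defMij}) and feed in the generation-by-generation estimates on $\nu_j^*$ provided by Lemma~\ref{lem:boundmk}, treating the two regimes $k\le k^*$ and $k\ge k^*$ separately and keeping in mind that $\nu_j^*\ge 1$ for every $j$ (each non-leaf vertex of $\bT_n^*$ has at least one child, so $\mu_j^*$ is supported on the positive integers, hence $Z_{i,j}\ge 1$ and $M_{i,j}^*\ge 1$).

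\emph{Upper bounds.} For $k\le k^*$, the inequality $\nu_j^*\le\nu$ from~\eqref{Boundmk-2} gives $M_{0,k}^*\le\nu^k$ at once; in particular $M_{0,k^*}^*\le\nu^{k^*}$. For $k\ge k^*$ I would split $M_{0,k}^*=M_{0,k^*}^*\prod_{j=k^*}^{k-1}\nu_j^*$ and bound the second factor using $\nu_j^*\le 1+c_5\nu^{-(j-k^*)}$ from~\eqref{Boundmk-1} together with $\prod(1+x_j)\le\exp(\sum x_j)$; since $\sum_{i\ge0}\nu^{-i}=\nu/(\nu-1)$ is finite, this factor is at most a constant, so $M_{0,k}^*\le \nu^{k^*}\exp\!\big(c_5\nu/(\nu-1)\big)=:c_8\nu^{k^*}$.

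\emph{Lower bounds.} For $k\le k^*$, Lemma~\ref{lem:boundmk} gives $\nu_j^*\ge\nu\big(1-\tfrac{c_5}{\nu}e^{-c_4(k^*-j)}\big)$, whence
\[
M_{0,k}^*\ \ge\ \nu^k\prod_{i=k^*-k+1}^{k^*}\Bigl(1-\tfrac{c_5}{\nu}e^{-c_4 i}\Bigr).
\]
The delicate point is that the factors of small index $i$ need not be positive, so one cannot apply Weierstrass' inequality directly. I would fix a constant $L=L(\mu)$ large enough that $\tfrac{c_5}{\nu}e^{-c_4 i}\le\tfrac12$ for $i\ge L$ and $\sum_{i\ge L}\tfrac{c_5}{\nu}e^{-c_4 i}\le\tfrac12$; for the at most $L$ indices $i<L$ I keep only the crude bound $\nu_j^*\ge1$, and to the remaining factors (all of index $\ge L$) I apply $\prod(1-x_i)\ge1-\sum x_i$. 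When $k^*-k\ge L$ the tail $\sum_{i\ge k^*-k+1}e^{-c_4 i}$ is a geometric sum of order $e^{-c_4(k^*-k)}$, which yields $M_{0,k}^*\ge(1-c\,e^{-c_4(k^*-k)})\nu^k$, matching~\eqref{BoundMk-2} after possibly decreasing $c_4$ (harmless, since it only needs to be a valid exponent throughout), or simply observing that the bound is vacuous when the bracket is negative. When $k^*-k<L$, peeling off those $O(1)$ generations instead gives $M_{0,k}^*\ge\tfrac12\nu^{k-L+1}\ge c_7'\nu^k$; taking $k=k^*$ in particular yields $M_{0,k^*}^*\ge c_7\nu^{k^*}$. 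Finally, for $k\ge k^*$ one writes $M_{0,k}^*=M_{0,k^*}^*\prod_{j=k^*}^{k-1}\nu_j^*\ge M_{0,k^*}^*\ge c_7\nu^{k^*}$ using $\nu_j^*\ge1$, which together with the upper bound proves~\eqref{BoundMk-1}; the weaker uniform form $c_7'\nu^{k\wedge k^*}\le M_{0,k}^*\le c_8\nu^{k\wedge k^*}$ is then immediate by assembling the two regimes.

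\textbf{Main obstacle.} Everything reduces to summing geometric series, so the only genuinely delicate step is the lower bound in the transition window $k\approx k^*$: there the termwise estimate $\nu_j^*\ge\nu\big(1-\tfrac{c_5}{\nu}e^{-c_4(k^*-j)}\big)$ is useless (the bracket may be negative), and one must absorb a bounded number of near-$k^*$ generations using only $\nu_j^*\ge1$ before Weierstrass' inequality can be applied to the remaining factors. This is also what forces the lower bound at $k=k^*$ itself — where~\eqref{BoundMk-2} degenerates to the trivial bound $0$ — to be obtained through this separate peeling argument rather than by continuity from $k<k^*$.
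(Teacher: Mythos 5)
Your argument is correct, but it takes a genuinely different route from the paper. You multiply the per-generation mean estimates of Lemma~\ref{lem:boundmk} and then control the resulting products (exponential bound $\prod(1+x_j)\le e^{\sum x_j}$ for $k\ge k^*$, Weierstrass plus a peeling of the $O(1)$ generations nearest $k^*$ for the lower bound), which is why you have to fight the transition window where the termwise bound $\nu_j^*\ge\nu(1-\tfrac{c_5}{\nu}e^{-c_4(k^*-j)})$ can have a negative bracket. The paper instead never leaves the level of the $\gamma_k$'s: from the explicit formula $\nu_i^*=\frac{1-\gamma_{i+1}}{1-\gamma_i}\,\nu$ (see~\eqref{formulanuk}) the product telescopes exactly, $M_{0,k}^*=\frac{1-\gamma_k}{1-\gamma_0}\,\nu^k$, and the lemma then drops out in two lines from the monotonicity of $(\gamma_k)$ and Proposition~\ref{PhaseTransGammas}: for $k\le k^*$ one gets $(1-\gamma_k)\nu^k\le M_{0,k}^*\le\nu^k$ with the exact prefactor $1-e^{-c_4(k^*-k)}$ and the same $c_4$ as in~\eqref{eq:boundsparameters}, and for $k\ge k^*$ the bounds $c_1\nu^{-(k-k^*)}\le 1-\gamma_k\le\nu^{-(k-k^*)}$ give~\eqref{BoundMk-1} directly. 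So the telescoping identity buys sharper constants (no prefactor in front of the exponential, no adjustment of $c_4$) and a much shorter proof with no window case analysis; your approach only yields the statement up to decreasing the exponential rate and enlarging constants near $k=k^*$, which you correctly flag and which is harmless for the weaker summarized form actually used later. What your route buys in exchange is robustness: it only uses $\nu_j^*\ge 1$ and the one-generation mean bounds, so it would apply to a general inhomogeneous branching process without the special structure $\nu_i^*=\frac{1-\gamma_{i+1}}{1-\gamma_i}\nu$ that makes the pruned tree's product collapse. One small quibble: your parenthetical ``the bound is vacuous when the bracket is negative'' is off, since $1-e^{-c_4(k^*-k)}\ge 0$ for all $k\le k^*$; it is merely small near $k=k^*$, and your peeling/constant-adjustment argument is what actually covers that range.
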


\noindent
This lemma properly shows that $M_{0,k}$ grows as  $\nu^k$ up to $k=k^*$ and then almost completely stops growing.

\section{Estimating the $p$-capacity of $\bT_n^*$: proof of Proposition~\ref{prop:capacitypruned}}
\label{sec:capacitypruned}

In view of Proposition~\ref{prop:pruning}, Proposition~\ref{prop:capacitypruned} amounts to estimating the $\frac32$-capacity of an inhomogeneous branching process. 
We find this question interesting in its own so we try to study this problem in the most general terms. 
We will use the structure on the pruned tree only at the end of the proof, to conclude the argument; the estimates in Section~\ref{sec:shape} turn out to be crucial for this last step.

\subsection{Estimating the capacity of an inhomogeneous branching process}
\label{sec:capainhomogeneous}

Let us consider a tree $\bt^{(n)}$ of depth $n$ generated by a branching process with inhomogeneous, $n$-dependent, offspring distributions $(\mu_k^{(n)})_{0 \leq k < n}$.
Note that the offspring distributions may also depend on $n$, as it is the case for the pruned tree $\bT_n^*$, see Proposition~\ref{prop:pruning} and the definition~\eqref{defmuk} of $\mu_k^*$.

For notational convenience, we will simply write $\mu_k := \mu_k^{(n)}$, and we assume that $\mu_k(0)=0$ for all $k$.
We denote $\nu_k$ the mean of $\mu_k$ (by convention $\nu_n=1$) and for $0\leq i <j \leq n+1$,
\begin{equation}
\label{def:Mij}
M_{i,j} := \prod_{k=i}^{j-1} \nu_k \,,
\end{equation}
with $M_{i,i}=1$ by convention.
Hence, $M_{i,j}$ is the mean size of a population generated by a branching process with offspring distribution $(\mu_k)_{i\leq k<j}$.
For $q\in (1,2]$, we also denote by~$\sigma_{q,k}$ the $q$-variance of $\mu_k$, that is
\[
\sigma_{q,k} := \sum_{i\geq 0} k^q\mu_k(i)  - \Big( \sum_{i\geq 0} k\mu_k(i) \Big)^q \,.
\]
We then have the following result.

\begin{proposition} 
\label{prop:capacity}
Let $\bt^{(n)}$ be a tree of depth $n$ generated by an inhomogeneous branching process with offspring distributions $(\mu_k^{(n)})_{0 \leq k < n}$, and associated resistances $(R_u)_{u\in \bt^{(n)}}$ given by $R_u = R^{-|u|}$ for some fixed $R>0$.
Let $p>1$ and $q=\frac{p}{p-1}$.
Then, we have
\begin{equation}
\label{UpperBoundExpCap} 
\bE \big[ \capa_{p} \big(\bt^{(n)} \big)\big] \leqslant  \bigg( \sum_{k=1}^{n} (R^{k} M_{0,k})^{-s} \bigg)^{-1/s} \quad \text{ with } s:= \frac{1}{p-1} =q-1 \,.
\end{equation}
Moreover, if $p\geq 2$ (so $q\in (1,2]$), then we have that for any $\gep>0$,
\begin{equation}
\label{LowerBoundExpCap}
\bP \left( \capa_{p} \big( \bt^{(n)} \big) \leq \gep \Big(\sum_{k=0}^n  v_{k,n} (R^{k} M_{0,k})^{-s} \Big)^{-1/s} \right) \leqslant \bP\big( |t_n^{(n)}| \leq \gep^{\frac{q-1}{q+1}} M_{0,n} \big) +\gep^{\frac{q-1}{q+1}} \,,
\end{equation}
where we have defined $v_{k,n} := 1 + \sum\limits_{i=k}^{n-1} \sigma_{q,i} (M_{k,i})^{-(q-1)}$ for $0\leq k\leq n-1$.
\end{proposition}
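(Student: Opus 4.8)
For the upper bound \eqref{UpperBoundExpCap}, the plan is to use the recursion \eqref{defRecCapp} for $\phi_p(u) = R_u \capa_p(\bt(u))$ with $R_u = R^{-|u|}$, so that $R_u/R_v = R^{-1}$ and the recursion reads $\phi_p(u) = \sum_{v\colon u\to v} R^{-1} \phi_p(v)/(1+\phi_p(v)^s)^{1/s}$. First I would establish by backward induction on the generation that $\bE[\capa_p(\bt_k(u))^s \mid |u|=k]$ admits an explicit upper bound. Concretely, set $a_k := \bE[\phi_p(u)^{-s}]$ for a generic $u$ at generation $k$ (this depends only on $k$ by the branching property and the fact that the offspring law depends only on the generation); equivalently $a_k = R^{ks}\bE[\capa_p(\bt_k(u))^{-s}]$. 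Using $(1+x^s)^{1/s} \le 1 + x$ type bounds and convexity of $t\mapsto t^{-s}$ (here $s = q-1 \in (0,1]$ so $t\mapsto t^{-s}$ is convex), together with the superadditivity-type inequality for $p$-resistances in series/parallel (which is exactly what \eqref{defRecCapp} encodes), one gets a linear recursion of the form $\bE[\text{resistance at level }k] \le R^{-1}\bE[\text{resistance at level }k+1] + (\text{something})$. Unwinding this recursion from the leaves ($\capa_p$ of a single leaf being $R^{-n}$, i.e.\ resistance $R^n$... wait, with $R_\rho=1$ convention) down to the root, and taking expectations using $\bE[|t_k|]=M_{0,k}$, produces the harmonic-type sum $\sum_{k=1}^n (R^k M_{0,k})^{-s}$. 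The cleanest route is to show directly that $\mathcal R_p(\bt^{(n)}) \le \big(\text{suitable unit flow}\big)$: take the flow that splits mass according to the actual tree (at each vertex distribute equally among children), compute its $p$-energy $\sum_u R_u^{s}\theta(u)^{s+1}$, take expectations, and use Jensen / the independence across generations to get $\bE[\sum_u R_u^s \theta(u)^{s+1}] \le \sum_k R^{-ks}\bE[|t_k|^{-s}] \le \sum_k R^{-ks} M_{0,k}^{-s}$ via convexity of $x\mapsto x^{-s}$; then $\capa_p = \mathcal R_p^{-1}$ and Jensen again (since $\capa \mapsto$ reciprocal) gives \eqref{UpperBoundExpCap}. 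I would carry out this flow computation explicitly as it avoids any delicate handling of the recursion.

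For the lower bound \eqref{LowerBoundExpCap}, the plan is to apply Thomson's principle by \emph{exhibiting a specific unit flow} on $\bt^{(n)}$ and controlling its $p$-energy, combined with a second-moment / concentration argument on the generation sizes. The natural choice is the flow $\theta(u) = Z_{|u|,n}(u)/|t_n^{(n)}|$, where $Z_{|u|,n}(u)$ is the number of generation-$n$ descendants of $u$; this is automatically a unit flow. Its $p$-resistance energy is $\sum_u R_u^{s}\theta(u)^{s+1} = |t_n^{(n)}|^{-(s+1)}\sum_k R^{-ks}\sum_{u\in t_k^{(n)}} Z_{k,n}(u)^{s+1}$. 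So $\capa_p(\bt^{(n)}) \ge |t_n^{(n)}|^{s+1}\big(\sum_k R^{-ks}\sum_{u\in t_k} Z_{k,n}(u)^{s+1}\big)^{-1}$. Now I would bound $\bE[\sum_{u\in t_k}Z_{k,n}(u)^{s+1}]$: condition on generation $k$, use that the $Z_{k,n}(u)$, $u\in t_k$, are i.i.d.\ with mean $M_{k,n}$, and that for $s+1 = q \in (1,2]$ one has $\bE[Z_{k,n}^q] \le M_{k,n}^q + (\text{a }q\text{-variance term})$; the standard estimate for branching processes (summing the per-generation $q$-variances weighted by the appropriate powers of the means, exactly as in \cite{AVB24}) gives $\bE[Z_{k,n}^q] \le c\, M_{k,n}^q\, v_{k,n}$ with $v_{k,n} = 1 + \sum_{i=k}^{n-1}\sigma_{q,i}M_{k,i}^{-(q-1)}$ — this is precisely where the quantity $v_{k,n}$ in the statement comes from, and it is the key lemma to isolate and prove (by a telescoping/recursive bound on $\bE[Z_{k,n}^q]$). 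Hence $\bE[\sum_{u\in t_k}Z_{k,n}(u)^q] = M_{0,k}\bE[Z_{k,n}^q] \le c\, M_{0,k}M_{k,n}^q v_{k,n} = c\, M_{0,n}^q (M_{0,k})^{-(q-1)} v_{k,n}$ (using $M_{0,k}M_{k,n}=M_{0,n}$), so $\bE\big[\sum_k R^{-ks}\sum_{u\in t_k}Z_{k,n}(u)^q\big] \le c\, M_{0,n}^q\sum_k v_{k,n}(R^k M_{0,k})^{-s}$. A Markov inequality then says this energy numerator is $\le \delta^{-1}$ times its mean with probability $\ge 1-\delta$, and separately $|t_n^{(n)}|$ is large (namely $\ge \gep^{(q-1)/(q+1)}M_{0,n}$) with the probability stated; on the intersection, plugging into the Thomson bound gives $\capa_p \ge c\,\gep\,\big(\sum_k v_{k,n}(R^kM_{0,k})^{-s}\big)^{-1/s}$ after choosing $\delta$ as the appropriate power of $\gep$ to balance the two error terms (this accounts for the exponent $\frac{q-1}{q+1}$). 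I would then absorb the constant $c$ into $\gep$.

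The main obstacle I anticipate is the $q$-th moment estimate $\bE[Z_{k,n}^q]\le c\,M_{k,n}^q v_{k,n}$ for an \emph{inhomogeneous, $n$-dependent} branching process with only a finite $q$-moment, $q\in(1,2]$, rather than a finite variance: one cannot simply iterate the variance identity. The fix is to use a von-Bahr–Esseen / Marcinkiewicz–Zygmund type inequality — namely, for centered independent summands, $\bE|\sum X_i|^q \le 2\sum \bE|X_i|^q$ when $q\in(1,2]$ — applied generation by generation to the martingale $Z_{k,j}/M_{k,j}$; this yields a recursion $\bE[(Z_{k,j+1}/M_{k,j+1})^q] \le \bE[(Z_{k,j}/M_{k,j})^q] + 2\,\sigma_{q,j}M_{k,j}^{-q}\bE[Z_{k,j}]$ that telescopes into exactly the $v_{k,n}$ sum. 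Making this fully rigorous (handling the conditioning, the non-centered parts, and the fact that $\nu_j$ can be close to $1$ near the top of the tree) is the technical heart; everything else is bookkeeping with the series/parallel laws and Jensen's inequality, which I would treat as routine and defer to the detailed write-up, pointing to \cite{AVB24} for the analogous homogeneous computation.
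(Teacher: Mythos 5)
Your lower bound follows essentially the paper's own route: the uniform flow $\hat\theta(u)=|t_{k,n}(u)|/|t_n^{(n)}|$ plugged into Thomson's principle, a split on the event that $|t_n^{(n)}|/M_{0,n}$ is small, Markov's inequality on the energy, and the key moment estimate $\bE[(Z_{k,n}/M_{k,n})^q]\leq v_{k,n}$. The paper obtains this last estimate from Neveu's subadditivity of the $q$-variance ($V_q(\sum_i\xi_i)\leq\sum_i V_q(\xi_i)$ for independent non-negative variables, $q\in(1,2]$), iterated generation by generation, which yields constant $1$ and hence the exact form of \eqref{LowerBoundExpCap}; your von Bahr--Esseen variant is essentially the same tool but produces an extra multiplicative constant that you absorb into $\gep$, which is harmless for the application. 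Apart from that and a dropped power $1/s$ in your Thomson bound (you state the inequality for $\capa_p^s$ rather than $\capa_p$), this half is sound.

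The upper bound, however, contains a genuine error. The route you elect to carry out --- exhibit a specific unit flow, bound its $p$-energy in expectation, then invert --- cannot produce \eqref{UpperBoundExpCap}: Thomson's principle writes $\mathcal{R}_p$ as an \emph{infimum} over unit flows, so any specific flow only gives $\mathcal{R}_p\leq(\text{its energy})$, i.e.\ a \emph{lower} bound on $\capa_p$, and no amount of taking reciprocals or applying Jensen can turn that into an upper bound on $\bE[\capa_p(\bt^{(n)})]$. Your intermediate step is also reversed: since $x\mapsto x^{-s}$ is convex, Jensen gives $\bE[|t_k|^{-s}]\geq M_{0,k}^{-s}$, not $\leq$. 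The recursion sketch you set aside is the correct idea but is garbled as written (the quantity $a_k=\bE[\phi_p(u)^{-s}]$ and convexity of $t\mapsto t^{-s}$ do not give control in the useful direction). The paper instead takes expectations directly in the recursion of Lemma~\ref{LemPemPer}, uses concavity of $x\mapsto x/(1+x^s)^{1/s}$ and Jensen to get $\bE[\Phi_k]\leq R\,\nu_k\,\bE[\Phi_{k+1}]/(1+\bE[\Phi_{k+1}]^s)^{1/s}$, and then sets $z_k:=\bE[\Phi_k]^{-s}$, so that $z_k\geq(R\nu_k)^{-s}(1+z_{k+1})$ telescopes into \eqref{UpperBoundExpCap}, valid for every $p>1$ with no moment assumption. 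To repair your write-up you should carry out this recursion and abandon the flow computation for the upper bound.
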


Note that if there is some $i_0$ for which $\mu_i$ does not admit a finite $q$-moment, this means that $v_{k,n}=+\infty$ for $k\geq i_0$. Then, the left-hand side of~\eqref{LowerBoundExpCap} becomes ${\bP ( \capa_{p} ( \bt^{(n)} \big) \leq  0)=0}$, so~\eqref{LowerBoundExpCap} is also verified (but not very informative).

\subsubsection{Upper bound in Proposition~\ref{prop:capacity}: proof of~\eqref{UpperBoundExpCap}}

For $u\in \bt^{(n)}$, recall the definition $\phi_p(u) := R_u \capa_p(\bt^{(n)}(u))$ in Lemma~\ref{LemPemPer}.
Now, consider the random variable $\Phi_{k} := \phi_p(u)$ for some vertex $u \in t_k$ chosen uniformly at random from generation $k$.
Then, by Lemma~\ref{LemPemPer} we get for $0\leq k <n$, with $s:= \frac{1}{p-1} =q-1$,
\[
\Phi_{k} = R\, \sum_{i=1}^{X_k} \dfrac{ \Phi_{k+1}^{(i)} }{(1 + (\Phi_{k+1}^{(i)})^s)^{1/s}},
\]
where $X_k$ is a random variable with distribution $\mu_k$ and $(\Phi_{k+1}^{(i)})_{i\geq 1}$ are i.i.d.\ copies of~$\Phi_{k+1}$ since the number of offspring of vertices in the same generation are independent. 
Note that we also have used that $R_v = R^{-|v|}$ here.

Taking the expectation we obtain by the branching property
\[
\bE [\Phi_{k}] = R\, \nu_k \, \bE \bigg[ \dfrac{ \Phi_{k+1} }{(1 + \Phi_{k+1}^s)^{1/s}} \bigg] \,. 
\]
As the function $x \mapsto x/(1+x^s)^{1/s}$ is concave, we can apply Jensen's inequality to the previous expression: we obtain
\begin{equation}
\label{eq:recIneqCap}
\bE [\Phi_{k}] \leqslant  R\, \nu_k \,
\dfrac{\bE[\Phi_{k+1}]}{(1+ \bE [\Phi_{k+1}]^s)^{1/s}} 
= \dfrac{R\,\nu_k}{(1+ \bE [\Phi_{k+1}]^{-s})^{1/s}}.
\end{equation}

\noindent
Now, defining $z_k:=\bE [\Phi_{k}]^{-s}$, we can rewrite \eqref{eq:recIneqCap} as
$z_k \geqslant (R \nu_k)^{-s} (1+z_{k+1})$.
By applying this inequality recursively,  we obtain that for all $k\leq n$,
\[
z_k \geqslant \sum_{i=k}^{n} \prod_{j=k}^{i} (R \nu_j)^{-s} + z_n \prod_{j=k}^{n-1} (R \nu_j)^{-s} \geq  \sum_{i=k}^{n} (R^{i+1-k} M_{k,i+1})^{-s} \,,
\]
where we have used the definition~\eqref{def:Mij} $M_{k,i+1} := \prod_{j=k}^{i} \nu_k$ for the last inequality.
Since $z_0=\bE[\Phi_{0}]^{-s}$ and $\Phi_{0}=\capa_p(\bt^{(n)})$, we get that
\[
\bE \left[ \capa_p (\bt^{(n)})\right] \leqslant  \bigg( \sum_{i=0}^{n-1} \Big( \dfrac{1}{R^{i+1} M_{0,i+1}} \Big)^s \bigg)^{-1/s} \,.
\]
This concludes the proof, up to an index change.
\qed

\subsubsection{Lower bound in Proposition~\ref{prop:capacity}: proof of~\eqref{LowerBoundExpCap}}
\label{sec:Thomson}

Let us define 
\[
A_n:= \sum\limits_{k=0}^n  v_{k,n}(R^{k} M_{0,k})^{-s}  \qquad \text{ with } s= q-1\,.
\]
Using the $p$-resistance $\mathcal{R}_p(\rho\leftrightarrow \partial \bt^{(n)}) = \capa_p(\bt^{(n)})^{-1}$ from Definition~\ref{def:pcapacity}, we need to show that
%
\begin{equation}
\label{eq:boundresistance}
\bP\left( \mathcal{R}_p(\rho \leftrightarrow \partial \bt^{(n)})^{s} \geq \gep^{-1}  A_n \right) \leq \bP\big( W_{0,n} \leq \gep^{\frac{1}{q+1}} \big) + \gep^{\frac{1}{q+1}} \,,
\end{equation}
where $W_{0,n} := |t_n^{(n)}|/M_{0,n}$.

\smallskip
\noindent
{\it Step 1.} Our first step is to find an upper bound on the resistance $\mathcal{R}_p(\rho \leftrightarrow \partial \bt^{(n)})$.
By the Thomson principle (see~\cite[\S2.4]{LyonsPeres}, or simply the Definition~\ref{def:pcapacity}), and recalling that $q=\frac{p}{p-1}$ is the conjugate exponent of $p$ and $s=q-1$, we get
\begin{equation}
\label{eq:ThompsonPple} 
\mathcal{R}_p(\rho \leftrightarrow \partial \bt^{(n)})^{s} = \inf_{\theta : \vert \theta \vert = 1} \sum_{u \in \bt^{(n)}} R_u^{s} \theta(u)^{q} \,,
\end{equation}
where $\theta$ is a flow on the tree $\bt^{(n)}$.
An upper bound is therefore obtained simply by choosing a specific flow $\theta$ on $\bt^{(n)}$:
similarly to what is done in \cite[Lem.~2.2]{PemPer95} (see also \cite[Lem.~3.3]{ChenHuLin}), we use the uniform flow $\hat \theta$ on $\bt^{(n)}$. 
For a vertex $u \in t_k^{(n)}$ in the $k$-th generation of $\bt^{(n)}$,  we let
\[
\hat \theta(u) := \dfrac{\vert t_{k,n}(u) \vert}{\vert t_{0,n}(\rho) \vert},
\]
where we denote by $t_{k,n}(u)$ the set of leaves that are descendants of the vertex~$u$. Thus, $\vert t_{k,n}(u) \vert$ is the number of descendants of $u$ in generation $n$. Note that $t_{0,n}(\rho) = t_n^{(n)}$ is the set of leaves of $\bt^{(n)}$.
We can easily see that the uniform flow $\hat \theta$ is a unitary flow: indeed, we have
\[
\hat \theta (\rho) 
= \dfrac{1}{\vert t_{n}^{(n)}\vert}\sum_{v \colon \rho \to v} \vert t_{1,n}(v) \vert = 1 \,.
\]
Therefore, by the Thomson principle \eqref{eq:ThompsonPple}, we have 
\[
\mathcal{R}_p\big( \rho \leftrightarrow \bt^{(n)} \big)^{s} \leq \sum_{v \in \bt^{(n)}} R_v^{s} \hat \theta(v)^{q}
 =  \frac{1}{|t_n^{(n)}|^{q}}\sum_{k=0}^n \sum_{u\in t_k^{(n)}} R^{-ks}  |t_{k,n}(u)|^q \, .
\]
Before we work on this upper bound, let us rewrite it using some notation.
For $0\leq k \leq n$ and $u\in t_k^{(n)}$, let us define for $\ell \in\{k,\ldots, n\}$
\begin{equation}
\label{eq:defMartingaleW}
W_{k,\ell}(u) = \dfrac{1}{M_{k,\ell}} \vert t_{k,\ell}(u) \vert \,,
\end{equation}
and notice that $(W_{k,\ell}(u))_{k\leq \ell \leq n}$ is a martingale (with mean $1$).
We also denote $W_{0,k}:= |t_k^{(n)}|/M_{0,k}$ the martingale $W_{0,k}(\rho)$. 
Then, we have
\begin{equation}
\label{eq:upperResistance}
\begin{split}
\mathcal{R}_p(\rho \leftrightarrow \bt^{(n)})^{s} &\leq \frac{1}{(W_{0,n})^q} \sum_{k=0}^n R^{-ks} \Big(\frac{M_{0,n}}{M_{k,n}} \Big)^q \sum_{v\in t_k^{(n)}} W_{k,n}(v)^q
\\
& \leq \frac{1}{(W_{0,n})^q} \sum_{k=0}^n (R^kM_{0,k})^{-s} \frac{1}{M_{0,k}} \sum_{v\in t_k^{(n)}} W_{k,n}(v)^q \,,
\end{split}
\end{equation}
where we have used that $M_{0,n}= M_{0,k}M_{k,n}$.

\smallskip
\noindent
{\it Step 2.} We are now ready to conclude the proof.
Using the above inequality and decomposing according to whether $W_{0,n}$ is small or not, we have
\begin{align*}
\bP\Big( \mathcal{R}_p(\rho \leftrightarrow T)^{s} &\geq \gep^{-1}   A_n \Big) \leq \bP\bigg( \sum_{k=0}^n (R^kM_{0,k})^{-s}   \frac{1}{M_{0,k}} \sum_{v\in t_k^{(n)}} W_{k,n}(v)^q \geq \gep^{-1} (W_{0,n})^q A_n\bigg) \\
& \leq \bP\big( W_{0,n} \leq \gep^{\frac{1}{q+1}} \big) + \bP \bigg(\sum_{k=0}^n (R^kM_{0,k})^{-s}  \frac{1}{M_{0,k}} \sum_{v\in t_k^{(n)}} W_{k,n}(v)^q \geq \gep^{-\frac{1}{q+1}} A_n \bigg) \\
& \leq \bP\big( W_{0,n} \leq \gep^{\frac{1}{q+1}} \big) + \gep^{\frac{1}{q+1}}  A_n^{-1} \sum_{k=0}^n (R^kM_{0,k})^{-s}  \bE\big[ (W_{k,n})^q\big] \,,
\end{align*}
where we have used Markov's inequality for the last part, together with the fact that $(W_{k,n}(v))_{v\in T_k}$ are i.i.d.; here $W_{k,n}$ denotes a random variable with the same distribution.

We now show that $\bE\big[ (W_{k,n})^q\big] \leq v_{k,n}$, by making use of the following lemma by Neveu. 
\begin{lemma}[p.~229 in \cite{neveu87}]
\label{lem:Neveu}
Let $q\in (1,2]$ and let $\xi_1,\ldots, \xi_\ell$ be non-negative independent random variables with a finite moment of order $q$. Then we have that
\[
V_q\Big(\sum_{i=1}^{\ell} \xi_i \Big) \leq \sum_{i=1}^\ell V_q(\xi_i) \,,
\]
where $V_q(\xi) = \E[\xi^q]-\E[\xi]^q$ is the $q$-variance of $\xi$.  
\end{lemma}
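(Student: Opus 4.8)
The plan is to prove the inequality by induction on $\ell$, the essential content being the case $\ell = 2$. For the inductive step I would write $S_{\ell-1} := \xi_1 + \cdots + \xi_{\ell-1}$, note that $S_{\ell-1}$ and $\xi_\ell$ are independent and have finite $q$-moments (controlling $\E[S_{\ell-1}^q]$ via $(x+y)^q \le 2^{q-1}(x^q+y^q)$ and the induction), apply the case $\ell=2$ to get $V_q(S_{\ell-1}+\xi_\ell) \le V_q(S_{\ell-1}) + V_q(\xi_\ell)$, and then invoke the induction hypothesis $V_q(S_{\ell-1}) \le \sum_{i=1}^{\ell-1} V_q(\xi_i)$. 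Note also that $V_q(\xi) \ge 0$ for any non-negative $\xi$ with finite $q$-moment, by Jensen's inequality applied to the convex map $x\mapsto x^q$, so all the quantities appearing are well-defined and non-negative.

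For the base case, let $X,Y$ be independent and non-negative with $\E X = a$, $\E Y = b$, and define $\phi(x,y) := (x+y)^q - x^q - y^q$ on $[0,\infty)^2$. A direct rearrangement shows that the desired bound $V_q(X+Y)\le V_q(X)+V_q(Y)$ is equivalent to $\E[\phi(X,Y)] \le \phi(a,b)$. The analytic input is that $\phi$ is concave in $x$ for each fixed $y\ge 0$ (and symmetrically in $y$): indeed $\partial_x^2 \phi(x,y) = q(q-1)\big( (x+y)^{q-2} - x^{q-2}\big)$, and since $1 < q \le 2$ the exponent $q-2$ is non-positive, so $(x+y)^{q-2} \le x^{q-2}$ and hence $\partial_x^2 \phi \le 0$ on $(0,\infty)$; continuity at $x=0$ then gives concavity on $[0,\infty)$.

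It remains to combine this with independence, which is the step to handle with care: one cannot simply apply Jensen's inequality twice to a function that is only separately, not jointly, concave. Instead I would condition on $Y$: by independence, $g(y) := \E[\phi(X,Y)\mid Y=y] = \E[\phi(X,y)] \le \phi(a,y)$ by Jensen's inequality in the concave first variable, and then $\E[\phi(X,Y)] = \E[g(Y)] \le \E[\phi(a,Y)] \le \phi(a, \E Y) = \phi(a,b)$, the last step being Jensen's inequality for the concave map $y\mapsto \phi(a,y)$. This proves the case $\ell = 2$ and closes the induction. The main obstacle is essentially expository — making transparent that it is independence (rather than any joint-concavity property, which genuinely fails) that licenses the two successive applications of Jensen — together with the routine bookkeeping ensuring all $q$-moments stay finite along the induction.
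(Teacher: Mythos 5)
Your proof is correct. Note that the paper does not prove this lemma at all: it is imported verbatim from Neveu (the citation to p.~229 of \cite{neveu87}), so there is no internal argument to compare against. Your blind argument is a complete and valid substitute: the reduction of $V_q(X+Y)\le V_q(X)+V_q(Y)$ to $\E[\phi(X,Y)]\le \phi(\E X,\E Y)$ with $\phi(x,y)=(x+y)^q-x^q-y^q$ is exact, the separate concavity of $\phi$ in each variable for $q\in(1,2]$ is verified correctly (the second derivative computation and the continuity argument at $0$ are fine, and $0\le\phi(x,y)\le 2^{q-1}(x^q+y^q)$ guarantees all expectations are finite so both applications of Jensen are licit), and you correctly identify that independence — via conditioning on $Y$ — is what allows the two successive one-variable Jensen steps, since $\phi$ is not jointly concave. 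The induction step and the moment bookkeeping via $(x+y)^q\le 2^{q-1}(x^q+y^q)$ are routine and correct.
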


Since $(|t_{k,\ell}|)_{0\leq \ell \leq n}$ is an inhomogeneous branching process with offspring distribution $(\mu_\ell)_{k\leq \ell <n}$, we can apply Lemma~\ref{lem:Neveu} conditionnally to the first $\ell$ generations: we obtain that
\[
\bE\big[ |t_{k,\ell+1}|^q\big] \leq  \bE\big[(\nu_{\ell} |t_{k,\ell}| )^q\big] + \sigma_{q,\ell} \bE\big[ |t_{k,\ell}|\big] = (\nu_{\ell})^q \bE\big[ |t_{k,\ell}|^q\big] + \sigma_{\ell}^2 M_{k,\ell} \,,
\]
where we recall that $\sigma_{q,\ell}$ is the $q$-variance of the law $\mu_{\ell}$. 
Iterating, we get that
\[
\bE\big[ |t_{k,n}|^q \big] \leq (M_{k,n})^q+ \sum_{i=k}^{n-1} (M_{i,n})^q \sigma_{q,i} M_{k,i} \,,
\]
so using that $M_{k,n}=M_{k,i} M_{i,n}$ we end up with
\[
\bE[ (W_{k,n})^q] \leq 1 + \sum_{i=k}^{n-1} \sigma_{q,i} (M_{k,i})^{-(q-1)}  =: v_{k,n} \,.
\]
This concludes the proof of~\eqref{LowerBoundExpCap}, recalling the definition of $A_n$.
\qed

\subsection{Estimating the $p$-capacity of $\bT_n^*$}
\label{sec:estimatecapa}

We now apply Proposition~\ref{prop:capacity} to obtain an upper and a lower bound on $\capa_{p}(\bT^*_n)$, which is the content of Proposition~\ref{prop:capacitypruned}.

\subsubsection{Upper bound on $\capa_{p}(\bT^*_n)$: proof of~\eqref{upperTn*}}
By Proposition~\ref{prop:capacity} and Theorem~\ref{thm:PemPer}, we have that
\[
\bE\otimes \EE \big[ \capa_p(\bT_n^*) \big] \leq  \bigg(\sum_{k=1}^{n} (R^{k} M_{0,k}^*)^{-s}  \bigg)^{-1/s} \,,
\]
with $s=\frac{1}{p-1}$, $R:=\tanh(\gb) <1$ and $M_{i,j}^*$ defined in~\eqref{defMij}.
We now use Lemma~\ref{lem:boundMk} to get that 
\begin{equation}
\label{eq:Kn}
c_7^{-1} K_n \leq \sum_{k=1}^{n} (R^{k} M_{0,k}^*)^{-s} \leq c_6^{-1} K_n\,,\qquad
\text{ with }  K_n := \sum_{k=1}^{n} R^{-ks} \, \nu^{ - (k\wedge k^*) s}  \,.
\end{equation}
Now, we can easily study $K_n$ depending on the value of $R \nu$; note that we only need a lower bound on $K_n$ since the above shows that $\bE\otimes \EE[\capa_{p}(\bT_n^*)]\leq c (K_n)^{-1/s}$.

\smallskip
\textbullet\ If $R\nu \neq  1$, then keeping only the term $k=n$ in the sum, we have
\[
K_n \geq  \nu^{-k^* s} R^{-ns}  =  \big( p_n (R\nu)^n  \big)^{-s} \,,
\]
where we have used that $n-k^* = \log_{\nu}(\frac{1}{p_n})$.
We end up with
\[
\bE\otimes \EE\big[ \capa_{p}(\bT_n^*) \big] \leq c (K_n)^{-1/s} \leq c\,  p_n (R \nu)^{n} \,,
\]
which gives~\eqref{upperTn*} in the case $R\nu\neq 1$ (in any case, $\capa_{p}(\bT_n^*) \leq $ since $K_n \geq (R \nu)^{-s}$ by considering only the term $k=1$ in the sum).

\smallskip
\textbullet\ If $R\nu=1$, then we have
\[
K_n \geq \sum_{k=1}^{k^*} 1  + \sum_{k=k^*+1}^{n}  R^{-(k-k^*)s} \geq  k^* + R^{- (n-k^*)s} \geq  \max\{k^*, p_n^{-s}\}\,,
\] 
where we have used that $R^{n-k^*} = \nu^{-(n- k^*)}$ with $\nu^{n-k^*} = \nu^{\bar k^*} =\frac{1}{p_n}$, by the definition~\eqref{defk*} of $\bar k^*$.
Now, notice that $k^* = n- \log_{\nu}(\frac{1}{p_n})$ so either $p_n \leq n^{-1/s}$, or $p_n \geq n^{-1/s}$ and then $k^* \geq n/2$.
We therefore end up with $K_n \geq c \max\{ n , p_n^{-s}\}$, so that
\[
\bE\otimes\EE\big[ \capa_{p}(\bT_n^*) \big] \leq  c (K_n)^{-1/s} \leq c' \min\big\{ n^{-1/s}, p_n \big\} \,.
\]
This concludes the proof of~\eqref{upperTn*} in the case $R\nu= 1$, recalling that $s=q-1$.
\qed

\subsubsection{Lower bound on $\capa_{p}(\bT^*_n)$: proof of~\eqref{lowerTn*}}

Applying Proposition~\ref{prop:capacity}, we need to control the two quantities in~\eqref{LowerBoundExpCap}. We treat them in the two following lemmas, whose proofs are postponed to Section~\ref{sec:lastlemmas}.

\begin{lemma}
\label{lem:vkn}
Let $\sigma_{q,k}^*$ be the $q$-variance of the law $\mu_k^*$ and $M_{k,i}= \prod_{j=k}^{i-1} \nu_j^*$ as defined in~\eqref{defMij}.
Then, there exists a constant $C>0$ such that for all $0\leq k \leq n$
\[
v_{k,n}^*:=1+ \sum_{i=k}^{n-1} \sigma_{q,i}^* (M_{k,i}^*)^{-(q-1)} \leq C \,.
\]
\end{lemma}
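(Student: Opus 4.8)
The plan is to bound the sum $v_{k,n}^* = 1 + \sum_{i=k}^{n-1} \sigma_{q,i}^* (M_{k,i}^*)^{-(q-1)}$ by splitting it into the two regimes $i \leq k^*$ and $i > k^*$, using the estimates from Section~\ref{sec:shape} (in particular Lemma~\ref{lem:BoundVar} for the $q$-variances and Lemma~\ref{lem:boundMk} for the growth of $M_{0,k}^*$). First I would record the basic bound $\sigma_{q,i}^* \leq m_q$ valid for all $i$ (Lemma~\ref{lem:BoundVar}), which handles finitely many terms near $k^*$ uniformly; the real work is controlling the geometric-type decay of $(M_{k,i}^*)^{-(q-1)}$ as $i$ grows.

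For the regime $k \leq i \leq k^*$: here $\nu_j^* $ is bounded below by a constant (in fact close to $\nu>1$ by Lemma~\ref{lem:boundmk}, and in any case $\nu_j^*\geq $ some $\nu_0>1$ since $\mu_j^*$ is supported on $\NN$ and close to $\mu$), so $M_{k,i}^* \geq \nu_0^{i-k}$ grows at least geometrically, hence $\sigma_{q,i}^*(M_{k,i}^*)^{-(q-1)} \leq m_q \,\nu_0^{-(q-1)(i-k)}$ and this part of the sum is bounded by $m_q/(1-\nu_0^{-(q-1)})$, a constant. (If $k > k^*$ this regime is empty.)

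For the regime $\max(k,k^*) \leq i \leq n-1$: now I want to combine the decay of $\sigma_{q,i}^*$ from Lemma~\ref{lem:BoundVar}, namely $\sigma_{q,i}^* \leq c_6 \,\nu^{-(q-1)(i-k^*)}$, with a lower bound on $M_{k,i}^*$. There are two sub-cases depending on where $k$ sits. If $k \geq k^*$, then all the relevant $\nu_j^*$ with $j\geq k\geq k^*$ satisfy $\nu_j^* \geq 1$, so $M_{k,i}^* \geq 1$ and the term is at most $c_6\,\nu^{-(q-1)(i-k^*)}$; summing over $i\geq k^*$ gives a constant. If $k < k^*$, then $M_{k,i}^* = M_{k,k^*}^* M_{k^*,i}^* \geq c\,\nu^{k^*-k}$ using Lemma~\ref{lem:boundMk} (more precisely $M_{0,i}^*/M_{0,k}^* \gtrsim \nu^{k^*}/\nu^{k} = \nu^{k^*-k}$), so the term is at most $c_6\,\nu^{-(q-1)(i-k^*)}\,\nu^{-(q-1)(k^*-k)} = c_6\,\nu^{-(q-1)(i-k)}$, and summing over $i\geq k^*$ again yields a constant. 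Collecting the at most three contributions (the leading $1$, the sub-$k^*$ geometric sum, and the super-$k^*$ geometric sum) gives $v_{k,n}^* \leq C$ with $C$ depending only on $\mu$ and $q$, uniformly in $k$ and $n$.

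The main obstacle is bookkeeping the interplay between the two indexing schemes and the two phase-transition regimes cleanly — in particular making sure the lower bounds on $M_{k,i}^*$ are applied with the correct version of Lemma~\ref{lem:boundMk} (the ratio $M_{0,i}^*/M_{0,k}^*$, not $M_{0,i}^*$ alone) and that the constant $c_7'$ versus $c_7$ issue at the transition generation does not hide a logarithmic factor. Once the decomposition is set up correctly, each piece is an elementary geometric series; there is no analytic difficulty beyond that, since the moment hypothesis $q\in(1,2]$ is exactly what makes the $q$-variances finite and Lemma~\ref{lem:BoundVar} applicable.
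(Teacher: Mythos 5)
Your proposal is correct and follows essentially the same route as the paper's proof: split the sum at $i=k^*$, bound the sub-$k^*$ part using $\sigma_{q,i}^*\leq m_q$ together with $M_{k,i}^*=M_{0,i}^*/M_{0,k}^*\geq c\,\nu^{i-k}$ from Lemma~\ref{lem:boundMk}, and the super-$k^*$ part using $M_{k,i}^*\geq 1$ together with the decay $\sigma_{q,i}^*\leq c_6\,\nu^{-(q-1)(i-k^*)}$ of Lemma~\ref{lem:BoundVar}, each piece being a geometric series bounded uniformly in $k$ and $n$. The only slip is the pointwise claim $\nu_j^*\geq \nu_0>1$ (support in $\NN$ only gives $\nu_j^*\geq 1$, and Lemma~\ref{lem:boundmk} makes $\nu_j^*$ close to $\nu$ only when $k^*-j$ is large), but this is harmless since, as you note yourself, the correct tool is the ratio bound of Lemma~\ref{lem:boundMk}, which is exactly what the paper uses.
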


\begin{lemma}
\label{lem:convmartingale}
Let $W_{0,n}^* = \frac{1}{M_{0,n}^*} |T_n^*|$. Then, if $\liminf_{n\to\infty} p_n \nu^n = +\infty$, for any $\gep>0$ we have 
\begin{equation*}
\lim_{\gep \downarrow 0}\limsup_{n\to\infty} \PP\big(W_{0,n} \leqslant \gep \big) =0\,.
\end{equation*}
\end{lemma}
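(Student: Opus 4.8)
The plan is to prove that the non-negative mean-$1$ martingale $(W_{0,\ell}^*)_{0\le \ell\le n}$ does not lose mass near $0$, uniformly in $n$. Throughout we use $\liminf_n p_n\nu^n=+\infty$, so $k^*\to\infty$; by Proposition~\ref{PhaseTransGammas} (at generation $0\le k^*$), $\gamma_0=\bP\otimes\PP(\bT_n^*=\emptyset)\le e^{-c_4 k^*}\to 0$. Since $W_{0,n}^*=0$ on $\{\bT_n^*=\emptyset\}$, it suffices to control $\PP\bigl(W_{0,n}^*\le\gep\mid \bT_n^*\neq\emptyset\bigr)$, and under this conditioning $\bT_n^*$ is, by Proposition~\ref{prop:pruning}, an inhomogeneous branching process with offspring laws $\mu_0^*,\dots,\mu_{n-1}^*$, all supported on $\{1,2,\dots\}$.

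First I would make two deterministic reductions. (i) No vertex of $\bT_n^*$ is a leaf before generation $n$, so $|T_n^*|\ge |T_{k^*}^*|$; with Lemma~\ref{lem:boundMk} ($M_{0,n}^*\le c_8\nu^{k^*}$, $M_{0,k^*}^*\ge c_7'\nu^{k^*}$) this gives $W_{0,n}^*\ge \tfrac{c_7'}{c_8} W_{0,k^*}^*$. (ii) Fix a large constant $L$. By Lemma~\ref{lem:boundmk} and Proposition~\ref{prop:d_TV}, for $k\le k^*-L$ one has $\nu_k^*\ge \nu- c_5 e^{-c_4 L}$ and $\mu_k^*(1)\le \mu(1)+c_4' e^{-c_4 L}$; choosing $L$ large makes these $\ge \nu':=\tfrac{1+\nu}{2}>1$ and $\le\tfrac{1+\mu(1)}{2}$, so (using $\mu_k^*(0)=0$) $\PP(X_k^*\ge 2)\ge \delta_0:=\tfrac{1-\mu(1)}{2}>0$. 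Since each vertex at generation $k^*-L$ has at least one descendant at generation $k^*$ while $M_{k^*-L,k^*}^*\le \nu^{L}$, we get $W_{0,k^*}^*\ge \nu^{-L} W_{0,k^*-L}^*$. So it is enough to bound $\PP(W_{0,k^*-L}^*\le\gep)$ for the truncated process, whose offspring laws all satisfy: supported on $\{1,2,\dots\}$, mean in $[\nu',\nu]$, $\PP(X\ge 2)\ge\delta_0$, finite $q$-variance $\le m_q$ (Lemma~\ref{lem:BoundVar}).

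The heart of the argument is an abstract non-degeneracy bound, proved by a bootstrap. Let $\mathcal F$ be the class of all inhomogeneous branching processes of arbitrary depth $m$ whose offspring laws obey the four constraints above, and let $\phi(\gep):=\sup_{\mathbf t\in\mathcal F}\PP(W_{\mathbf t}\le\gep)\in[0,1]$, where $W_{\mathbf t}$ is the terminal (generation-$m$) martingale of $\mathbf t$; note $\phi$ is non-decreasing. Conditioning on the number $d\ge 1$ of children of the root, the $d$ rooted subtrees are independent elements of $\mathcal F$ of depth $m-1$, and $W_{\mathbf t}=\tfrac1{\nu_0}\sum_{i=1}^d W^{(i)}\ge \nu^{-1}\max_i W^{(i)}$, so $\PP(W_{\mathbf t}\le\gep\mid d)\le \phi(\nu\gep)^d$; using $\mu_0(0)=0$, $\phi\le 1$, $\phi(\nu\gep)^d\le\phi(\nu\gep)^2$ for $d\ge2$, and $\mu_0(1)\le 1-\delta_0$, this yields
\[
\phi(\gep)\ \le\ (1-\delta_0)\,\phi(\nu\gep)\ +\ \delta_0\,\phi(\nu\gep)^2\qquad\text{for all }\gep>0.
\]
Let $x_0:=\lim_{\gep\downarrow 0}\phi(\gep)$; sending $\gep\downarrow 0$ gives $x_0\le (1-\delta_0)x_0+\delta_0 x_0^2$, i.e.\ $\delta_0 x_0(x_0-1)\ge 0$, hence $x_0\in\{0,1\}$. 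To exclude $x_0=1$, I use the uniform moment bound $\bE[W_{\mathbf t}^q]\le 1+m_q\sum_{i\ge 0}(\nu')^{-i(q-1)}=:C<\infty$ for $\mathbf t\in\mathcal F$, which is exactly the Neveu-type estimate $\bE[W_{k,n}^q]\le v_{k,n}$ established in the proof of Proposition~\ref{prop:capacity}, bounded here via $\sigma_{q,i}\le m_q$ and $M_{0,i}\ge(\nu')^i$. Since $\bE[W_{\mathbf t}]=1$, Hölder's inequality gives $1-\gep\le \bE[W_{\mathbf t}\ind_{\{W_{\mathbf t}>\gep\}}]\le C^{1/q}\PP(W_{\mathbf t}>\gep)^{(q-1)/q}$, so $\phi(\gep)\le 1-C^{-1/(q-1)}(1-\gep)^{q/(q-1)}<1$ for $\gep$ small; therefore $x_0=0$. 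Combining with the reductions, $\PP(W_{0,n}^*\le\gep)\le \gamma_0+\phi\bigl(\tfrac{c_8}{c_7'}\nu^{L}\gep\bigr)$, and letting $n\to\infty$ then $\gep\downarrow 0$ concludes.

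The main obstacle is precisely this uniform non-degeneracy: boundedness of $W_{0,n}^*$ in $L^q$ together with $\bE[W_{0,n}^*]=1$ does not by itself rule out mass escaping to $0$ (think of $\tfrac12\delta_0+\tfrac12\delta_2$), so the tree structure — each vertex has $\ge1$, and with probability $\ge\delta_0$ at least $2$, offspring — must be genuinely used through the recursive inequality above. The passage through generation $k^*$ and the truncation of the last $O(1)$ generations are the technical price of the fact that the laws $\mu_k^*$ degenerate to $\delta_1$ near and beyond $k^*$, so that the uniform lower bound $\PP(X_k^*\ge2)\ge\delta_0$ only holds for $k\le k^*-L$; everything else is routine bookkeeping with the estimates of Section~\ref{sec:shape}. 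One could alternatively invoke general results on the non-degeneracy of inhomogeneous branching-process martingales, but the self-contained bootstrap is cleaner here.
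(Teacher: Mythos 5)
Your proof is correct, but it follows a genuinely different route from the paper's. The paper gives a short \emph{ad hoc} argument that exploits the explicit structure of the pruning: conditionally on $|T_n|$, the number of surviving leaves $|T_n^*|$ is exactly $\mathrm{Bin}(|T_n|,p_n)$, and $M_{0,n}^*=(1-\gamma_0)^{-1}p_n\nu^n$; the event $\{|T_n^*|\leq \gep M_{0,n}^*\}$ is then split according to whether $|T_n|\leq 2\gep\nu^n$, the binomial part is killed by concentration (its mean $\gep p_n\nu^n\to\infty$ under $\liminf p_n\nu^n=+\infty$), and the remaining term is controlled by the a.s.\ convergence of the Kesten--Stigum martingale $\nu^{-n}|T_n|$ of the \emph{unpruned, homogeneous} tree to a positive limit $\mathcal W$ (positivity using $\mu(0)=0$ and the $q$-moment). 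You instead work intrinsically with the inhomogeneous branching-process description of $\bT_n^*$: you reduce to generation $k^*-L$ via the monotonicity $|T_n^*|\geq|T_{k^*}^*|\geq|T_{k^*-L}^*|$ and Lemmas~\ref{lem:boundmk}--\ref{lem:boundMk}, check via Proposition~\ref{prop:d_TV} and Lemma~\ref{lem:BoundVar} that the truncated offspring laws lie in a uniform class (no zeros, mean in $[\nu',\nu]$, $\P(X\geq2)\geq\delta_0$, $q$-variance $\leq m_q$), and then prove a uniform non-degeneracy at $0$ for that whole class by the bootstrap $\phi(\gep)\leq(1-\delta_0)\phi(\nu\gep)+\delta_0\phi(\nu\gep)^2$, excluding the fixed point $1$ through the Neveu/H\"older bound $\bE[W^q]\leq C$, $\bE[W]=1$. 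All the individual steps check out (the subtree decomposition, the $0$--$1$ dichotomy, and the uniform $L^q$ bound reusing the $v_{k,n}$ estimate from Proposition~\ref{prop:capacity}), modulo the implicit requirement $k^*\geq L$, which holds for $n$ large. What your approach buys is generality and self-containedness: it never uses the binomial thinning representation nor the homogeneity of the original tree, so it would apply to any $n$-dependent inhomogeneous branching process whose offspring laws are uniformly non-degenerate up to the truncation level; the price is a substantially longer argument that leans on the full strength of the Section~\ref{sec:shape} estimates, where the paper's proof needs essentially only $\gamma_0\to0$ and standard facts about the homogeneous Galton--Watson martingale.
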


\noindent
Combining these two lemmas, Proposition~\ref{prop:capacity} and the bound~\eqref{eq:Kn}, we obtain that  
\[ 
\lim_{\gep\downarrow 0}\limsup_{n\to\infty} \bP\otimes \PP\big( \capa_{p}(\bT^*_n) \leq \gep (K_n)^{-1/s} \big) =0 \,.
\]
It only remains to get an upper bound on~$K_n$, depending on the value of $R\nu$.
 
\smallskip
\textbullet\ If $R\nu\neq 1$,  since $R=\tanh(\gb)<1$, we have that 
\begin{align*}
 K_n & = \sum_{k=1}^{k^*} (R\nu)^{-ks} +\nu^{-k^* s}  \sum_{k=k^*+1}^n R^{-ks} \\
 & \leq  \frac{1}{(R\nu)^{s}-1}  \big( 1-(R\nu)^{-k^*s} \big) + c \nu^{-k^*s} R^{-ns} \leq c' + c''(R\nu)^{-k^*s}  + c \big(p_n (R\nu)^{n}\big)^{-s} \,,
\end{align*}
where we have used again that $\nu^{n-k^*} = 1/p_n$ by definition~\eqref{defk*} of $k^*$.
In the case $R\nu<1$ (where we already know that $\capa_{p}(\bT^*_n)$ goes to $0$ in probability) we get that the upper bound on $K_n$ diverges.
In the case $R\nu>1$ the upper bound is a constant times $\max \{1, (p_n (R\nu)^n)^{-s}\}$. 
By taking the power $-1/s$, this concludes the proof of~\eqref{lowerTn*} in the case $R\nu \neq 1$.


\smallskip
\textbullet\ If $R\nu =1$, then similarly to the above, we have
\[
K_n \leq k^* + c \nu^{-k^*} R^{-n} \leq c' \max \{k^* , \nu^{(n-k^*)s}\}  \leq  c' \max \big\{n , p_n^{-s} \big\}
\]
recalling that $\nu^{n-k^*} =1/p_n$.
By taking the power $-1/s$, this concludes the proof of~\eqref{lowerTn*} in the case $R\nu=1$.
\qed

\subsubsection{Last technical lemmas: proof of Lemmas~\ref{lem:vkn} and~\ref{lem:convmartingale}}
\label{sec:lastlemmas}

\begin{proof}[Proof of Lemma~\ref{lem:vkn}]
We start with the case where $k\geq k^*$. Then,  splitting the sum in the definition of $v_{k,n}^*$ (see Proposition~\ref{prop:capacity}) according to whether $i \leq k^*$ or $i \geq k^*$, we obtain
\[ 
v_{k,n}^* =1+ \sum_{i=k}^{k^*} \sigma_{q,i}^* (M_{k,i}^*)^{q-1} + \sum_{i=k^*+1}^{n-1} \sigma_{q,i} (M_{k,i}^*)^{q-1} 
\leq 1+ c \sum_{i=k}^{k^*} \nu^{-(q-1)(i-k)} + \sum_{i=k^*+1}^{n-1} \sigma_{q,i}^*\,,
\]
where we have used that $\sigma_{q,i} \leq \E[X^q]$, see Lemma~\ref{lem:BoundVar}, that $ M_{k,i}^* = \frac{M_{0,i}^*}{M_{0,k}^*} \geq c \nu^{i-k}$ for $k\leq i\leq k^*$ thanks to Lemma~\ref{lem:boundMk}, and finally that $M_{k,i}^* \geq 1$ in the case $i\geq k^*$.
The first sum is finite since $\nu>1$, and the second sum is also finite, using the bound~\eqref{Boundvark-2} in Lemma~\ref{lem:BoundVar}.

In the case where $k<k^*$, then we have similarly
\[
v_{k,n}^* =1+ \sum_{i=k}^{n-1} \sigma_{q,i}^* (M_{k,i}^*)^{q-1} \leq 1+ \sum_{i=k}^{n-1} \sigma_{q,i}^* \,,
\]
that last sum being also bounded by a universal constant.
\end{proof}

\begin{proof}[Proof of Lemma~\ref{lem:convmartingale}]
First of all, let us observe that in the general case of a branching process with inhomogeneous, $n$-dependent, offspring distributions $(\mu_{k}^{(n)})_{1\leq k \leq n}$, one cannot use a martingale convergence as one would for a homogeneous branching process.
For the pruned tree, we now give some \textit{ad-hoc} proof, which uses the structure of the tree. 

In the case of the pruned tree $\bT_n^*$, we use the fact that the distribution of $|T_n^*|$ is explicit: conditionally on $|T_n|$, it is a binomial $\Bin(|T_n|,p_n)$.
In particular, 
\[
M_{0,n}^* := \bE\big[ |T_n^*| \mid |T_n^*| >1 \big] = (1-\gamma_0)^{-1} \bE\big[ |T_n^*| \big] = (1-\gamma_0)^{-1} p_n \nu^n \,,
\]
with $\gamma_0$ close to $0$.
We therefore get that
\[
\begin{split}
\bP\big( |T_n^*| \leq \gep M_{0,n} \big)
&\leq \bP\big( |T_n|  \leq 2\gep \nu^n \big) + \bP\big(  |T_n|  \geq 2\gep \nu^n , \Bin(|T_n|,p_n) \leq \gep (1-\gamma_0)^{-1} p_n \nu^n \big)  \\
& \leq \bP\big( |T_n|  \leq 2\gep \nu^n \big) + \bP\big(\Bin(2 \gep \nu^n,p_n) \leq \gep (1-\gamma_0)^{-1} p_n \nu^n \big) \,.
\end{split} 
\]
The second probability clearly goes to $0$, as long as $\liminf_{n\to\infty} p_n \nu^n = +\infty$ (using also that $\gamma_0$ goes to $0$).
For the first probability, we use that $\bT$ is a homogeneous branching process: we have that $(\frac{1}{\nu^n} |T_n|)_{n\geq 0}$ is a martingale that converges almost surely to a \emph{positive} random variable $\mathcal{W}$ (recall we assumed that $\mu(0)=0$).
Hence, we get that
\[
\limsup_{n\to\infty} \bP\big( |T_n^*| \leq \gep M_{0,n} \big) \leq \bP(\mathcal{W} \leq 2\gep) \,,
\]
which goes to $0$ as $\gep\downarrow 0$ and concludes the proof.
\end{proof}

\appendix
\section{Some technical proofs and comments}

\subsection{About recursions: proofs of Lemmas~\ref{lem:BoundsG} and~\ref{Recursionzero}}
\label{app:proofs}

\begin{proof}[Proof of Lemma~\ref{lem:BoundsG}]
Recall that $G$ is the generating function of a random variable $X\in \NN$ with mean $\nu$ and a finite moment of order $q \in (1,2]$, that we denote $m_q:= \E[X^q]$.

The lower bound in~\eqref{BoundGin1} is obvious since by the Taylor-Lagrange theorem, for all $s \in [0,1]$ there exists $x_1 \in [s,1]$ such that:
\begin{equation*}
G(s)  = G(1) + G'(1)(s-1) + \dfrac{G''(x_1)}{2} (s-1)^2  \geq 1 + \nu(s-1)\,,  \label{BorneInfG}
\end{equation*}
using that $G''(x_1)\geq 0$ by convexity.

For the upper bound in~\eqref{BoundGin1}, since $\sum_{k\geq 0}\mu(k)=1$ and $\sum_{k\geq 0} k\mu(k) =\nu$, we can write, for $s\in (0,1)$ 
\[
G(s) - (1-\nu(1-s)) = \sum_{k=0}^{\infty} \big[s^k -(1-k(1-s))\big] \mu(k)  \leq \sum_{k=0}^{\infty} \big[e^{-k(1-s)} -(1-k(1-s)) \big]\mu(k) \,.
\]
Splitting the sum at $k_s:= \lfloor \frac{1}{1-s} \rfloor$, and using that 
\[
e^{-k(1-s)} -(1-k(1-s)) \leq e^{-1} (k(1-s))^2 \leq e^{-1} (k(1-s))^q \quad \text{ for } k\leq k_s
\]
(since then $k(1-s)\leq 1$ and $e^{-x}\leq 1-x+e^{-1} x^2$ for $x\in [0,1]$), we obtain that
\[
G(s) - (1-\nu(1-s)) \leq e^{-1} (1-s)^q \sum_{k=1}^{k_s} k^q \mu(k) + \sum_{k=k_s+1}^{+\infty} e^{- k(1-s)} \mu(k) \,.
\]
The first term is bounded by $e^{-1} m_q (1-s)^q$, so we focus on the second term.
We write it as
\[
\sum_{j=0}^{\infty} \sum_{k=2^{j-1} (k_s+1)}^{2^j (k_s+1)} e^{- k(1-s)} \mu(k) \leq \sum_{j=0}^{\infty} e^{-2^{j-1}} \P\big( X \geq  2^{j-1} (1-s)^{-1} \big) \,,
\]
using also that $(k_s+1)(1-s)\geq 1$.
Applying Markov's inequality to bound the last probability, we end up with
\[
\sum_{k=k_s+1}^{+\infty} e^{- k(1-s)} \mu(k) \leq \sum_{j=0}^{\infty} e^{-2^{j-1}} 2^{-q(j-1)} (1-s)^q m_q =: c' m_q (1-s)^q \,,
\]
which concludes the proof.
 

The bounds of \eqref{BoundGin0} are easily deduced from the definition $G(s)= \sum_{i\geqslant 1} \mu(i) s^{i}$, using that $s^i\leq s^{d_0+1}$ for $i\geq d_0+1$ and $\sum_{i\geq d_0+1} \mu(i) \leq 1$ for the upper bound. 
\end{proof}

\begin{proof}[Proof of Lemma~\ref{Recursionzero}]
Thanks to the bounds \eqref{BoundGin0}, we have for any $j \geqslant 1$
\[
\mu(d_0)\; (u_{j-1})^{d_0} \leqslant u_j = G(u_{j-1}) \leqslant \mu(d_0) \; (u_{j-1})^{d_0} + (u_{j-1})^{d_0+1}.
\]
Thus, by iteration, we obtain for $j \geqslant 1$, 
\begin{equation}
\label{eq:formulaiterate}
\prod_{i=0}^{j-1} \mu(d_0)^{d_0^i} \times (u_0)^{d_0^{j}} \leqslant u_{j} \leqslant \prod_{i=0}^{j-1} \mu(d_0)^{d_0^i} \times (u_0)^{d_0^{j-1}} \times \prod_{i=0}^{j-1} \left( 1 + \dfrac{u_{i}}{\mu(d_0)} \right)^{d_0^{j-1-i}}.
\end{equation}

\smallskip
(i) In the case $d_0=1$, the lower bound is directly given by~\eqref{eq:formulaiterate}.
For the upper bound, we only need to control $\prod_{i=0}^{j-1} ( 1 + \frac{1}{\mu(1)} u_{i})$. We use that, $G$ being convex, $G(s) \leqslant p_{\alpha} s$ for all $s\in [0,1-\alpha]$, with  $p_{\alpha}= \frac{G(1-\alpha)}{1-\alpha}<1$:
we easily deduce by iteration that $u_i \leqslant (p_{\alpha})^{i} u_0$ for all $i\geq 0$.
This leads to the following:
\[
\prod_{i=0}^{j-1} \Big( 1 + \dfrac{u_{i}}{\mu(1)} \Big) \leqslant \exp\Big( \dfrac{1}{\mu(1)} \sum_{i=0}^{j-1}  u_i \Big) \leqslant \exp\Big( \dfrac{u_0}{\mu(1)} \dfrac{1}{1-p_{\alpha} } \Big) \leq C_{\alpha} ,
\]
with $C_{\alpha}:=e^{\frac{1-\alpha}{\mu(1) (1-p_{\alpha})}}$, which proves \eqref{RecursionIn0}.

\smallskip
(ii) In the case $d_0\geq 2$, the lower bound is also immediate from~\eqref{eq:formulaiterate}, since $\sum_{i=0}^{j-1} d_0^i =  \frac{d_0^{j}-1}{d_0-1}$.
For the upper bound, let us start by noting that if $\mu(d_0)=1$, then we have the exact formula $u_j = (u_0)^{d_0^{j-1}}$, with $u_0\leq 1-\alpha$.
If $\mu(d_0)<1$, then there is some $v_0 \in (0,1)$ such that $G(s) \leq s^{d_0}$ for all $s\leq v_0$.
Since there is some $j_{\alpha}$ such that $u_j\leq v_0$ for all $j\geq j_{\alpha}$, we get as above that for $j\geq j_{\alpha}$
\[
u_{j} \leqslant (u_{j_{\alpha}})^{d_0^{j-j_{\alpha}}} \leq \exp\big( - (d_0^{-j_{\alpha}} \log v_0)  d_0^j \big)\,.
\]
Adjusting the constant to deal with the terms $j< j_{\alpha}$, this gives the desired bound.
\end{proof}

\subsection{About the offspring distribution $\mu_k^*$}
\label{app:phasetransition}

In this section, we prove Proposition~\ref{prop:d_TV} and Lemmas~\ref{lem:boundmk}-\ref{lem:BoundVar} and~\ref{lem:boundMk}, which show the sharp phase transition in the pruned tree;
all these results rely on Proposition~\ref{PhaseTransGammas}

Recall Lemma~\ref{ZeroTBin} which says that $X_k^* \sim \mu_k^* =\widehat{\Bin}(X,1-\gamma_{k+1})$, where $\widehat{\Bin}$ denotes a zero-truncated binomial, \textit{i.e.}\ a binomial conditioned on being strictly positive, see Definition~\ref{def:ZeroTBin}, and $X$ is a random variable with law $\mu$.

\subsubsection{Proof of Proposition~\ref{prop:d_TV}}
Let us start with the case $k\leq k^*$.
Since $X_k^*\sim\mu_k^* =\widehat{\Bin}(X,1-\gamma_{k+1})$ by Lemma~\ref{ZeroTBin}, it provides a natural coupling $(X_k^*, X)$ with $X_{k}^*\sim \mu_k^*$ and $X\sim \mu$, where the conditional law of $X_k^*$ given $X$ is a zero-truncated Binomial $\widehat{\Bin}(X,1-\gamma_{k+1})$.
Then, by the interpretation of the total variation distance in terms of coupling, we obtain
\begin{equation*}
d_{\TV} (\mu_k^*, \mu) \leqslant \P (X_k^* \neq X) = \sum_{d \geqslant 1} \mu(d) \P(B_{k,d} \neq d \mid B_{k,d} >0 ) \,,
\end{equation*}
with $B_{k,d} \sim \Bin(d, 1-\gamma_{k+1} )$.
As $\gamma_{k+1} \in [0,1]$ and $d \geqslant 1$, we have 
\begin{align*}
\P(B_{k,d} = d) & = (1-\gamma_{k+1})^d \geqslant 1-d \gamma_{k+1} \\
\P(B_{k,d} = 0) & = (\gamma_{k+1})^d \leqslant \gamma_{k+1} \leq \gamma_{k^*+1} \quad \text{ for } k\leq k^* \,,
\end{align*}
recalling also that $(\gamma_k)_{k\geq 0}$ is non-decreasing for the last inequality (see Lemma~\ref{MonoGamma}).
Since $\gamma_{k^*+1} = \bar \gamma_{\bar k^*-1} \leq 1-\alpha_1$ for some $\alpha_1>0$ (see Step~3 of the proof of Proposition~\ref{PhaseTransGammas}), we therefore have
\[
\P(B_{k,d} \neq d \mid B_{k,d} >0 ) = \dfrac{1-(1-\gamma_{k+1})^d}{1-(\gamma_{k+1})^d} \leqslant \frac{d}{\alpha_1}  \, \gamma_{k+1}\,,
\]
so $d_{\TV} (\mu_k^*, \mu) \leq \frac{1}{\alpha_1}\nu  \gamma_{k+1}$. 
This yields the desired upper bound thanks to Proposition~\ref{PhaseTransGammas}; see also the general bound~\eqref{eq:boundsparameters}.

\smallskip
For the case $k\geq k^*$, note that for any coupling $(X_k^*, Y)$ such that the marginal distributions are $X_k^* \sim \mu_{k}^*$ and $Y=1$ we have $d_{\TV} (\mu_k^*, \delta_1) \leqslant \P (X_k^* \neq 1)$.
Using again Lemma~\ref{ZeroTBin} that describes the law of $X_k^*$, we have
\[
\P (X_k^* \neq 1) = \sum_{d \geqslant 2} \mu(d) \P\big( B_{k,d} \geqslant 2 \mid B_{k,d} >0 \big),
\]
with $B_{k,d} \sim \Bin(d,1-\gamma_{k+1} )$. 
By sub-additivity and Bonferroni's inequality\footnote{For any events $(A_i)_{i\geq 1}$, we have $\P(\bigcup_{i=1}^n A_i) \geq \sum_{i=1}^n \P(A_i) - \sum_{1\leq i<j \leq n} \P(A_i\cap A_j)$.}, we have
\[
\P(B_{k,d} \geqslant 2) \leqslant \dfrac{d(d-1)}{2} (1-\gamma_{k+1})^2,
\qquad 
\P(B_{k,d} \geqslant 1) \geqslant d (1-\gamma_{k+1}) - \dfrac{d(d-1)}{2} (1-\gamma_{k+1})^2.
\]
Thus, we have the bound
\[
\P(B_{k,d} \geqslant 2 \mid B_{k,d} >0  )  \leqslant \dfrac{(d-1)(1-\gamma_{k+1})^2}{ (1-\gamma_{k+1}) (2-(d-1)(1-\gamma_{k+1}))} \,.
\]
This gives that $\P(B_{k,d} \geqslant 2 \mid B_{k,d} >0  ) \leq  (d-1) (1-\gamma_{k+1}) $ for $d \leqslant (1-\gamma_{k+1})^{-1}$, using also that $(1-\gamma_{k+1})_{k\geq 0}$ is non-increasing.
Bounding $\P(B_{k,d} \geqslant 2 \mid B_{k,d} \geqslant 1 )$ by $1$ when $d > (1-\gamma_{k+1})^{-1}$, we get
\begin{align*}
\sum_{d \geqslant 2} \mu(d) \P(B_{k,d}  \geqslant 2 \mid B_{k,d} >0 ) 
 & \leqslant \sum_{d=2}^{(1-\gamma_{k+1})^{-1}} \mu(d) d (1-\gamma_{k+1}) + \sum_{d > (1-\gamma_{k+1})^{-1}} \mu(d) \\
&  \leqslant \nu (1-\gamma_{k+1})+ \P\big(X > (1-\gamma_{k+1})^{-1} \big) \leq 2 \nu (1-\gamma_{k+1} )\,,
\end{align*}
where we have used Markov's inequality in the last term.
This gives the desired upper bound, thanks to Proposition~\ref{PhaseTransGammas}.
\qed

\subsubsection{Proof of Lemmas~\ref{lem:boundmk} and~\ref{lem:BoundVar}}

%

First of all, let us stress that one can easily obtain an explicit formula for $\nu_k^*$: we have
\begin{equation}
\label{formulanuk}
\nu_k^*  :=\E [X_k^*] = \dfrac{1-\gamma_{k+1}}{1-\gamma_{k}} \nu \,.
\end{equation}

\begin{proof}[Proof of Equation~\eqref{formulanuk}]
Letting $B_k\sim \mathrm{Bin}(X, 1-\gamma_{k+1})$, we have thanks to Lemma~\ref{ZeroTBin} that
\begin{equation*}
\nu_k^* = \sum_{z\geq 1} z \P(X_k^*=z) = \sum_{z\geq 1} z \dfrac{\P(B_k=z)}{\P(B_k >0)} = \dfrac{1}{1-\gamma_{k}} \sum_{z \geq 1} z \P(B_k=z),
\end{equation*}
where we have used that $\P(B_k = 0) =\gamma_k$ so $\P(B_k>0)=1-\gamma_k$, see the proof of Lemma~\ref{ZeroTBin}. 
Working with the definition of $B_k$, we easily see that 
\[
\sum_{z\geq 1} z \P(B_k=z) = \sum_{d \geq 1} \mu(d) \E\big[\Bin(d, 1-\gamma_{k+1})\big] = (1-\gamma_{k+1} )\sum_{d \geq  0} \mu(d) d =  (1-\gamma_{k+1}) \nu \,,
\]
which gives~\eqref{formulanuk}.
\end{proof}

\begin{proof}[Proof of Lemma~\ref{lem:boundmk}]
Our starting point is the formula~\eqref{formulanuk}.

Let us begin with the case $k \leq k^*$.
First of all, using that the parameters $(\gamma_k)_{0 \leqslant k < n}$ are non-decreasing (see Lemma~\ref{MonoGamma}), we get from the expression~\eqref{formulanuk} of $\nu_k^*$ the easy bound $\nu_k^* \leqslant \nu$.
On the other hand, using that $1-\gamma_{k}\leq 1$, we get from~\eqref{formulanuk} that
$\nu_k^* \geqslant \nu (1-\gamma_{k+1})$.
Applying the bound in Proposition \ref{PhaseTransGammas}, we then obtain the upper bound in~\eqref{Boundmk-2}.

For the case $k\geq k^*$, recalling that $1-\bar \gamma_{k+1} = F(1-\bar \gamma_k)$ with $F(t) =1-G(1-t)$, we have thanks to~\eqref{BornesF} 
\[
\nu (1-\gamma_{k+1}) \big(1- C_{\mu} (1-\gamma_{k+1}) \big)  \leqslant (1-\gamma_{k}) \leqslant \nu (1-\gamma_{k+1}).
\]
Therefore, using the expression~\eqref{formulanuk} for $\nu_k^*$, we get that 
\begin{equation}
\label{boundsDelta/delta}
1 \leqslant \nu_k^* = \dfrac{1-\gamma_{k+1}}{1-\gamma_{k}}  \nu \leqslant  \dfrac{1}{1-C_{\mu} (1-\gamma_{k+1})} \leq 1+ 2 C_{\mu} (1-\gamma_{k+1}) \,,
\end{equation}
for all $k$ such that $C_{\mu} (1-\gamma_{k+1}) \leq \frac12$, using that $(1-x)^{-1} \leq 1+2x$ for $x\in [0,\frac12]$.
Hence, using the bound in Proposition~\ref{PhaseTransGammas}, this yields \eqref{Boundmk-1} for $k \geq k_2^* := \min \{k \,,   1-\gamma_{k+1} \leq 1/(2C_{\mu})\}$.
Similarly as in the proof of Step~2 of Proposition~\ref{PhaseTransGammas}, one easily gets that $k_2^* \leq k^* +L_1'$ for some constant $L_1'$ that depends only on the law $\mu$; adapting the constant, this yields~\eqref{Boundmk-1}.
\end{proof}

\begin{proof}[Proof of Lemma~\ref{lem:BoundVar}]
Our starting point is the following computation, analogous to the one in the proof of~\eqref{formulanuk}: letting $B_k\sim \mathrm{Bin}(X, 1-\gamma_{k+1})$, we have thanks to Lemma~\ref{ZeroTBin} that
\[
\E\big[ (X_k^*)^q \big] = \frac{1}{1-\gamma_k} \sum_{z\geq 1} z^q \P(B_k=z) = \frac{1}{1-\gamma_k} \sum_{d\geq 0}  \mu(d)  \E\big[ \mathrm{Bin}(d, 1-\gamma_{k+1})^q \big] \,.
\]
Now, by Lemma~\ref{lem:Neveu}, since $\mathrm{Bin}(d, 1-\gamma_{k+1})$ is a sum of $d$ independent Bernoulli random variables (each of $q$-variance bounded by $1-\gamma_{k+1}$), we obtain that 
\[
\E\big[ \mathrm{Bin}(d, 1-\gamma_{k+1})^q \big] \leq \E\big[ \mathrm{Bin}(d, 1-\gamma_{k+1}) \big]^q + d (1-\gamma_k) = d^q (1-\gamma_{k+1})^q + d(1-\gamma_{k+1})\,.
\]
We therefore end up with
\[
\E\big[ (X_k^*)^q \big]  \leq \frac{1-\gamma_{k+1}}{1-\gamma_k} (1-\gamma_{k+1})^{q-1} m_q + \frac{1-\gamma_{k+1}}{1-\gamma_k} \nu \leq (1-\gamma_{k+1})^{q-1} m_q + \nu_k^* \,,
\]
where we have used $1-\gamma_{k+1}\leq 1-\gamma_k$ (by Lemma~\ref{MonoGamma}) and the formula~\eqref{formulanuk}.
We therefore end up with the bound
\[
\sigma_{q,k}^* := \E\big[ (X_k^*)^q \big] - (\nu_k^*)^q \leq (1-\gamma_{k+1})^{q-1} m_q  \,,
\]
using also that $\nu_k^* - (\nu_k^*)^q \leq 0$ since $\nu_k^* \geq 1$.

This readily proves that $\sigma_{q,k}^* \leq m_q$ for any $k\geq 0$ and gives the bound~\eqref{Boundvark-2} for $k\geq k^*$ by applying Proposition \ref{PhaseTransGammas}.
%
%
%
\end{proof}

\subsubsection{Proof of Lemma~\ref{lem:boundMk}}
Notice that the expression of $\nu_i^{*}$ in~\eqref{formulanuk} yields the following expression of $M_{0,k}^*$: for all $ 0\leq k\leq n$,
\[
M_{0,k}^* = \prod_{i=0}^{k-1} \dfrac{1-\gamma_{i+1}}{1-\gamma_{i}} \nu = \dfrac{1-\gamma_{k}}{1-\gamma_{0}}\,\nu^k.
\]

For $k \leq  k^*$, as the parameters $(\gamma_k)_{0 \leqslant k < n}$ are non-decreasing (see Lemma~\ref{MonoGamma}) and $1-\gamma_{0}\leq10$,
we have
\[
(1-\gamma_k) \,\nu^k \leq M_{0,k}^* = \dfrac{1-\gamma_{k}}{1-\gamma_{0}} \, \nu^{k} \leqslant \nu^{k}.
\]
Applying the bound in Proposition \ref{PhaseTransGammas} for $1-\gamma_k$ (or the general bound~\eqref{eq:boundsparameters}), we get the desired estimate.

For $k \geq k^*$, the upper bound of $1-\gamma_{k}$ from Proposition~\ref{PhaseTransGammas} yields
\[
M_{0,k}^* \leq \nu^{k} \dfrac{1}{\nu^{k-k^*}} \dfrac{1}{1-\gamma_{0}}.
\]
Since $\gamma_{0}$ is close to $0$ (see e.g.\ Proposition~\ref{PhaseTransGammas}), we have the right-hand side of \eqref{BoundMk-1}. 
For the left-hand side, using $1-\gamma_{0} \leq 1$ and Proposition~\ref{PhaseTransGammas}, we have 
\[
M_{0,k}^* \geqslant  \nu^{k} (1- \gamma_{k}) \geq c_1  \nu^{k^*} \,,
\]
which concludes the proof.
\qed

\paragraph*{Acknowledgements.}
The authors warmly thank Remco van der Hofstad and Arnaud Le Ny for many enlightening discussions.
They also thank the anonymous referee for his or her thorough comments which helped improve the presentation of the paper.
I.A.V. acknowledges the support of CNRS IRP Bezout - Eurandom.
Q.B.\ acknowledges the support of grant ANR-22-CE40-0012.

\bibliographystyle{amsplain}
\bibliography{bibPrunedTree.bib}

\providecommand{\bysame}{\leavevmode\hbox to3em{\hrulefill}\thinspace}
\providecommand{\MR}{\relax\ifhmode\unskip\space\fi MR }
\providecommand{\MRhref}[2]{%
  \href{http://www.ams.org/mathscinet-getitem?mr=#1}{#2}
}
\providecommand{\href}[2]{#2}
\begin{thebibliography}{10}

\bibitem{AVB24}
Irene Ayuso~Ventura and Quentin Berger, \emph{Non-linear conductances of galton--watson trees and application to the (near) critical random cluster model}, preprint arXiv:2404.11564 [math.PR] (2024).

\bibitem{DemboBasak17}
Anirban Basak and Amir Dembo, \emph{{Ferromagnetic Ising measures on large locally tree-like graphs}}, The Annals of Probability \textbf{45} (2017), no.~2, 780 -- 823.

\bibitem{BRZ95}
Pavel~M Bleher, Jean Ruiz, and Valentin~A Zagrebnov, \emph{On the purity of the limiting gibbs state for the ising model on the bethe lattice}, Journal of Statistical Physics \textbf{79} (1995), 473--482.

\bibitem{BlekGani1991}
PM~Blekher and NN~Ganikhodgaev, \emph{On pure phases of the ising model on the bethe lattices}, Theory of Probability \& Its Applications \textbf{35} (1991), no.~2, 216--227.

\bibitem{Bodineau2006}
Thierry Bodineau, \emph{Translation invariant {G}ibbs states for the ising model}, Probability theory and related fields \textbf{135} (2006), 153--168.

\bibitem{Bovier}
Anton Bovier, \emph{Statistical {Mechanics} of {Disordered} {Systems}: {A} {Mathematical} {Perspective}}, Cambridge {Series} in {Statistical} and {Probabilistic} {Mathematics}, Cambridge University Press, Cambridge, 2006.

\bibitem{can2017critical}
Van~Hao Can, \emph{Critical behavior of the annealed {I}sing model on random regular graphs}, Journal of Statistical Physics \textbf{169} (2017), 480--503.

\bibitem{can2022annealed}
Van~Hao Can, Cristian Giardin{\`a}, Claudio Giberti, and Remco van~der Hofstad, \emph{Annealed {I}sing model on configuration models}, Annales de l'Institut Henri Poincar\'e (B) Probabilit\'es et Statistiques \textbf{58} (2022), no.~1, 134--163.

\bibitem{CCST86}
JT~Chayes, L~Chayes, James~P Sethna, and DJ~Thouless, \emph{A mean field spin glass with short-range interactions}, Communications in Mathematical Physics \textbf{106} (1986), no.~1, 41--89.

\bibitem{ChenHuLin}
Dayue Chen, Yueyun Hu, and Shen Lin, \emph{Resistance growth of branching random networks}, Electronic Journal of Probability \textbf{23} (2018), 1--17.

\bibitem{CKLN23}
Loren Coquille, Christof Kuelske, and Arnaud Le~Ny, \emph{Continuity of the extremal decomposition of the free state for finite-spin models on cayley trees}, preprint arXiv:2310.11101 (2023).

\bibitem{CoqKulLeNy2023}
Loren Coquille, Christof K{\"u}lske, and Arnaud Le~Ny, \emph{Extremal inhomogeneous gibbs states for {SOS}-models and finite-spin models on trees}, Journal of Statistical Physics \textbf{190} (2023), no.~4, 71.

\bibitem{CNS21}
Cl{\'e}ment Cosco, Shuta Nakajima, and Florian Schweiger, \emph{Asymptotics of the $p$-capacity in the critical regime}, preprint arXiv:2112.03661 (2021).

\bibitem{DemboMontGibbs10}
Amir Dembo and Andrea Montanari, \emph{{Gibbs measures and phase transitions on sparse random graphs}}, Brazilian Journal of Probability and Statistics \textbf{24} (2010), no.~2, 137 -- 211.

\bibitem{DemboMontanari10}
Amir Dembo and Andrea Montanari, \emph{Ising models on locally tree-like graphs}, The Annals of Applied Probability \textbf{20} (2010), no.~2, 565--592.

\bibitem{DomGiaRvdH}
Sander Dommers, Cristian Giardin{\`a}, and Remco van~der Hofstad, \emph{Ising models on power-law random graphs}, Journal of Statistical Physics \textbf{141} (2010), no.~4, 638--660.

\bibitem{DomGiaRvdH12}
Sander Dommers, Cristian Giardin{\`a}, and Remco van~der Hofstad, \emph{Ising critical exponents on random trees and graphs}, Communications in Mathematical Physics \textbf{328} (2014), 355--395.

\bibitem{Dor08}
S.~N. Dorogovtsev, A.~V. Goltsev, and J.~F.~F. Mendes, \emph{Critical phenomena in complex networks}, Reviews of Modern Physics \textbf{80} (2008), no.~4, 1275--1335.

\bibitem{EndoVanEnterLeNy2021}
Eric~O Endo, Aernout~CD van Enter, and Arnaud Le~Ny, \emph{The roles of random boundary conditions in spin systems}, In and Out of Equilibrium 3: Celebrating Vladas Sidoravicius (2021), 371--381.

\bibitem{FriedliVelenik}
Sacha Friedli and Yvan Velenik, \emph{Statistical {Mechanics} of {Lattice} {Systems}: {A} {Concrete} {Mathematical} {Introduction}}, 1 ed., Cambridge University Press, November 2017.

\bibitem{GMR20}
Daniel Gandolfo, Christian Maes, Jean Ruiz, and Senya Shlosman, \emph{Glassy states: the free ising model on a tree}, Journal of Statistical Physics \textbf{180} (2020), 227--237.

\bibitem{GandRuizShlos2012}
Daniel Gandolfo, Jean Ruiz, and Senya Shlosman, \emph{A manifold of pure gibbs states of the ising model on a cayley tree}, Journal of Statistical Physics \textbf{148} (2012), 999--1005.

\bibitem{GGvdHP2015}
Cristian Giardin{\`a}, Claudio Giberti, Remco van~der Hofstad, and {Maria Luisa} Prioriello, \emph{Quenched central limit theorems for the {I}sing model on random graphs}, Journal of Statistical Physics \textbf{160} (2015), no.~6, 1623--1657.

\bibitem{griffiths67}
Robert~B Griffiths, \emph{Correlations in ising ferromagnets. {I}}, Journal of Mathematical Physics \textbf{8} (1967), no.~3, 478--483.

\bibitem{Higuchi1977}
Yasunari Higuchi, \emph{Remarks on the limiting {Gibbs} states on a $(d+1)$-tree}, Publications of the Research Institute for Mathematical Sciences \textbf{13} (1977), no.~2, 335--348.

\bibitem{Ioffe96}
Dmitry Ioffe, \emph{On the extremality of the disordered state for the ising model on the bethe lattice}, Letters in Mathematical Physics \textbf{37} (1996), 137--143.

\bibitem{KellySherman68}
Douglas~G Kelly and Seymour Sherman, \emph{General {G}riffiths' inequalities on correlations in {I}sing ferromagnets}, Journal of Mathematical Physics \textbf{9} (1968), no.~3, 466--484.

\bibitem{Lyons89}
Russell Lyons, \emph{The {Ising} model and percolation on trees and tree-like graphs}, Communications in Mathematical Physics \textbf{125} (1989), no.~2, 337--353.

\bibitem{LyonsPeres}
Russell Lyons and Yuval Peres, \emph{Probability on {Trees} and {Networks}}, Cambridge {Series} in {Statistical} and {Probabilistic} {Mathematics}, Cambridge University Press, Cambridge, 2017.

\bibitem{Montanari2013}
Andrea Montanari, \emph{Statistical mechanics and algorithms on sparse and random graphs}, Lectures on Probability Theory and Statistics. Saint-Flour (2013).

\bibitem{MontanariMosselSly}
Andrea Montanari, Elchanan Mossel, and Allan Sly, \emph{The weak limit of {Ising} models on locally tree-like graphs}, Probability Theory and Related Fields \textbf{152} (2012), 31--51.

\bibitem{neveu87}
Jacques Neveu, \emph{Multiplicative martingales for spatial branching processes}, Seminar on Stochastic Processes, 1987, Springer, 1987, pp.~223--242.

\bibitem{PemPer95}
Robin Pemantle and Yuval Peres, \emph{{G}alton-{W}atson trees with the same mean have the same polar sets}, The Annals of Probability (1995), 1102--1124.

\bibitem{PemPer10}
\bysame, \emph{The critical {Ising} model on trees, concave recursions and nonlinear capacity}, The Annals of Probability \textbf{38} (2010), no.~1, 184--206.

\bibitem{Rozikov}
Utkir~A. Rozikov, \emph{Gibbs {Measures} {On} {Cayley} {Trees}}, World Scientific, July 2013 (en).

\bibitem{vdHRandomGraphs}
Remco van~der Hofstad, \emph{Random graphs and complex networks, vol.~1}, Cambridge Series in Statistical and Probabilistic Mathematics, Cambridge University Press, 2017.

\bibitem{vdH21}
\bysame, \emph{The giant in random graphs is almost local}, preprint arXiv:2103.11733 (2021).

\bibitem{vdHRandomGraphs2}
\bysame, \emph{Random graphs and complex networks, vol.~2}, accessible on R. van der Hofstad website, 2023.

\bibitem{vanEnterNetocnySchaap2006}
Aernout~CD van Enter, Karel Neto{\v{c}}n{\`y}, and Hendrikjan~G Schaap, \emph{Incoherent boundary conditions and metastates}, Lecture Notes-Monograph Series (2006), 144--153.

\bibitem{Wu2000}
C.~Chris Wu, \emph{Ising models on hyperbolic graphs {II}}, Journal of Statistical Physics \textbf{100} (2000), 893--904.

\end{thebibliography}

\end{document}